\newtheorem{thm}{Theorem}[section]
\newtheorem{lemma}[thm]{Lemma}
\newtheorem{prop}[thm]{Proposition}
\newtheorem{coro}[thm]{Corollary}
\newtheorem{defn}[thm]{Definition}
\newtheorem{example}[thm]{Example}
\newtheorem{remark}[thm]{Remark}
\newcommand{\am}{\operatorname{Am}}
\newcommand{\aut}{\operatorname{Aut}}
\newcommand{\im}{\operatorname{im}}
\newcommand{\pic}{\operatorname{Pic}}
\newcommand{\wg}{\operatorname{WG}}
\newcommand{\pp}{\mathbb{P}}
\newcommand{\zz}{\mathbb{Z}}
\newcommand{\cc}{\mathbb{C}}
\newcommand{\autp}{\operatorname{AutP}}
\newcommand{\neo}{\overline{\operatorname{NE}}}
\newcommand{\nef}{\operatorname{Nef}}
\numberwithin{equation}{section} 
\author{Shreya Sharma}
\address{Shreya Sharma, University of South Carolina, SC, USA.}
\email{shreyas@email.sc.edu}
\title{Actions on the Picard group of smooth Fano threefolds}
\begin{document}

\begin{abstract}
    We classify the possible images of the action of the group of automorphisms of a smooth Fano threefold on its Picard group. We also study the first group cohomology of the Picard group for families of smooth Fano threefolds.
\end{abstract}

\maketitle


\section{Introduction}
Let $X$ be a smooth projective variety over $\mathbb{C}$. We say $X$ is Fano if its anticanonical divisor $-K_X$ is ample. 
Iskovskikh, Mori, and Mukai have classified all deformation families of Fano threefolds, see \cite{isko1, isko2, MM81classtable, mori1983fano, MM86, MMerratum}. There are $105$ such families. A complete list of deformation families with a description of their general smooth member along with other algebraic-geometric information can be found in \cite{fanography}. The classification of Fano threefolds was recently completed in positive characteristic by Asai and Tanaka in \cite{tanaka-I, tanaka-II, asaitanaka-III, tanaka2023IV}. 

The classification of automorphism groups of smooth Fano threefolds is a highly challenging problem, and efforts to understand it are ongoing. The connected components of the identity of automorphism groups of all smooth Fano threefolds were classified in \cite{hilbert, izv19}.
This identity component acts trivially on the Picard group of $X$. Thus, by studying the natural action of automorphism group on the Picard group, one may hope for a deeper understanding of the automorphism group itself. The automorphism groups of smooth cubic threefolds were classified in \cite{wei2019automorphismgroupssmoothcubic}. Several other recent works have investigated the automorphism groups of various families of Fano threefolds; see, for instance, \cite{calabi, cheltsovs2-12family, abe2025automorphismgroupslinearizabilityrational}. 

Suppose that $X$ is a Fano threefold and $G$ is a finite group of automorphisms of $X$. The Leray spectral sequence of the $G$-action on $X$ gives the exact sequence (\cite[\S 3]{kresch2022cohomologyfinitesubgroupsplane})
\begin{align*}
1 \rightarrow \operatorname{Hom}(G,\cc^\times)& \rightarrow \pic(X,G) \rightarrow \pic(X)^G \xrightarrow{\partial} H^2(G,\cc^\times)\to \\
& \operatorname{Br}([X/G])\to H^1(G,\pic(X))\to H^3(G,\cc^\times).
\end{align*}
where $\pic(X,G)$ and $\pic(X)^G$ are the groups of isomorphism classes of $G$-linearized and $G$-invariant line bundles on $X$, respectively. $\operatorname{Br}([X/G])$ is the Brauer group of quotient stack $[X/G]$. The group $\am(X,G):=\im(\partial)$ is called the \emph{Amitsur subgroup}. The three groups, $\am(X,G)$, $H^1(G,\pic(X))$ and $\operatorname{Br}([X/G])$ are important stable equivariant birational invariants (\cite{BCDP, H1isbirinvariant, hassett_tschinkel_odd}). They provide obstructions to equivariant versions of arithmetic analogs of rationality (such as linearizability) and have been a focus of recent interest in \cite{kresch2022cohomologyfinitesubgroupsplane,kresch-unramified-brauergroup,equibrauergroup,tschinkel2025cohomologicalobstructionsequivariantunirationality}. 

The Amitsur subgroup is classified for smooth Fano varieties of dimension 1 and 2 over algebraically closed fields (\cite{BCDP, dolga-modular-curves}). The first cohomology group $H^1(G,\pic(X))$ has also been studied for smooth del Pezzo surfaces, see \cite{manin-cubicforms, urabe-H1computation} for the arithmetic case and \cite{H1isbirinvariant,kresch2022cohomologyfinitesubgroupsplane} for an application to the plane Cremona group. In order to compute both of these groups, it is desirable to determine the induced $G$-action on $\pic(X)$. 

The main objective of this paper is to determine, for each family of smooth Fano threefolds $X$, the possible images $\autp(X)$ of the natural action of $\aut(X)$ on $\pic(X)$. The image of a finite group $G \subseteq \aut(X)$ under this action preserves some combinatorial information associated to the Mori cone of $X$, see \cite[2.8, 2.9]{hmonpaper}. In \cite{Prokhorov-G-FanoII}, Prokhorov lists the families of Fano threefolds for which $\pic(X)^G \cong \zz$, where the $G$-action need not necessarily come from $X$. In a related direction, the authors in \cite[Lemma 2.13]{abban2025kstabilityfano3foldsworld} classify the families of Fano threefolds for which every extremal ray of the Mori cone is $G$-invariant. The actual computation of the image of $G$ for all families of Fano threefolds, however, is still open. In \cite{Mat95}, K. Matsuki defines the Weyl group of a Fano threefold that preserves the same data as the image of $G$ and gives their complete classification (cf. Definition \ref{def:WG}). 

In the table below, we list the families of Fano threefolds with $\rho\leq 5$ that have nontrivial Weyl group, $\wg_X$. For the remaining families, $\wg_X=0$.

\begin{table}[ht]\renewcommand{\arraystretch}{1.2}  
\caption{Weyl groups of Fano threefolds}  \label{weylfano}
\begin{tabularx}{0.80\textwidth}{|X|c|}
\hline
Family & $\wg_X$ \\
\hhline{|=|=|}
\textnumero 2.2, \textnumero 2.6, \textnumero 2.12, \textnumero 2.21, \textnumero 2.32, \textnumero 3.3, \textnumero 3.7, \textnumero 3.9, \textnumero 3.10, \textnumero 3.17, \textnumero 3.19, \textnumero 3.20, \textnumero 3.25, \textnumero 3.31, & $\zz/2\zz$ \\
 \textnumero 4.3, \textnumero 4.4, \textnumero 4.7, \textnumero 4.8, \textnumero 4.10, \textnumero 4.12, \textnumero 4.13, \textnumero 5.2 &  \\
\hline
\textnumero 3.1, \textnumero 3.13, \textnumero 3.27, \textnumero 4.6, \textnumero 5.1 &  $S_3$\\
\hline
\textnumero 4.2  & $(\zz/2\zz)^2$\\
\hline
\textnumero 4.1    &  $S_4$ \\
\hline 
\textnumero 5.3   & $\zz/2\zz \times S_3$ \\
\hline
\end{tabularx}
\end{table}

If $X$ is a Fano threefold with $\rho \geq 6$,  then $X$ is isomorphic to $\pp^1\times S$, where $S$ is a del Pezzo surface of degree at most $5$. In this case, the Weyl group of $X$ coincides with the Weyl group of del Pezzo surface $S$.
The Weyl groups of del Pezzo surfaces of degree at most $5$ are described in \cite[\S\S 6.3–6.7]{dolgaisko_planecremonagroups}.

We observe that the group $\autp(X)$ is contained in Matsuki's Weyl group (cf. Proposition \ref{imagealphainweylgroup}). 
The classification in Table \ref{weylfano} then implies that $\autp(X)=0$ except possibly for the families listed there. For each deformation family with nontrivial $\wg_X$, we either exhibit a smooth member in the family admitting nontrivial automorphisms that induce nontrivial automorphisms of $\pic(X)$, or show that any $G \subseteq \aut(X)$ acts trivially on the Picard group.

\smallskip
The main classification result is as follows. 
\begin{thm}  \label{them:classresult}
  There exists a $X$ in every deformation family of Fano threefolds such that $\autp(X)=\wg_X$ except possibly for families
  \begin{center}
      \textnumero 2.2, \textnumero 3.9, \textnumero 4.2, \textnumero 6.1, \textnumero 7.1, \textnumero 8.1, \textnumero 9.1, \textnumero 10.1. 
  \end{center}
\end{thm}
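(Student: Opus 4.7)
The plan is to proceed family by family. Proposition \ref{imagealphainweylgroup} gives the inclusion $\autp(X) \subseteq \wg_X$, and for every family of Fano threefolds not appearing in Table \ref{weylfano} (and not covered by the $\rho \geq 6$ product description) one has $\wg_X = 0$, so nothing is to be shown. The problem therefore reduces to exhibiting, for each family listed in Table \ref{weylfano} outside the excluded list, a specific smooth $X$ together with automorphisms whose induced action on $\pic(X)$ realizes every element of $\wg_X$.

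For each such family I would pick a smooth representative using the explicit description in \cite{fanography} and \cite{MM81classtable}, typically as a blow-up of $\pp^3$, a smooth quadric, or a product of projective spaces along suitable subvarieties, or as a divisor or complete intersection therein. The construction will be arranged so that the blow-up loci or defining equations are symmetric under an obvious linear action of the ambient variety, and then one checks that the induced automorphisms of $X$ permute the extremal rays of $\mori(X)$ in the way required by $\wg_X$. For instance, when $\wg_X = \zz/2\zz$ it suffices to blow up two disjoint isomorphic subvarieties placed in symmetric position and verify that a symmetric choice remains smooth and stays in the expected deformation family; for $\wg_X = S_3$ or $S_4$ (families \textnumero 3.1, \textnumero 3.13, \textnumero 3.27, \textnumero 4.1, \textnumero 4.6, \textnumero 5.1) one arranges three or four symmetric loci and uses the full symmetric group; for \textnumero 5.3 with $\wg_X = \zz/2\zz \times S_3$, the product structure on $(\pp^1)^3$ supplies both an $S_3$ permuting the factors and a commuting involution.

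The case $\rho(X) \geq 6$ is handled uniformly via the isomorphism $X \cong \pp^1 \times S$ with $S$ a del Pezzo surface of degree at most $5$. Here $\aut(X)$ contains $\aut(\pp^1) \times \aut(S)$, and the induced action on $\pic(X) = \zz \oplus \pic(S)$ factors through $\aut(S) \to \aut(\pic(S))$. The fact that the full Weyl group of $S$ is realized by automorphisms of a suitable smooth $S$ is classical and can be read off from \cite[\S\S 6.3--6.7]{dolgaisko_planecremonagroups}, which settles this range up to the deferred exceptions \textnumero 6.1, \textnumero 7.1, \textnumero 8.1, \textnumero 9.1, \textnumero 10.1.

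Once $X$ and the candidate automorphisms have been written down, the verification is a direct calculation of the induced permutation on a chosen set of generators of $\pic(X)$ (proper transforms of exceptional divisors, pullbacks of hyperplane classes, etc.) and a comparison with the prescribed Weyl-group element, which is essentially bookkeeping. The main obstacle is the \emph{construction} step: the symmetry demanded by the full Weyl group imposes strong constraints on the blow-up data (e.g.\ curves of prescribed degree in a fixed incidence configuration), and for some families such a symmetric smooth member may not obviously exist inside the relevant moduli. This is presumably the reason \textnumero 2.2, \textnumero 3.9, \textnumero 4.2, \textnumero 6.1, \textnumero 7.1, \textnumero 8.1, \textnumero 9.1, \textnumero 10.1 are set aside; for every remaining family, a symmetric construction can be written down explicitly and the induced Picard action read off directly.
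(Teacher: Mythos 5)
Your outline reproduces the paper's overall strategy (reduce via Proposition \ref{imagealphainweylgroup} and Table \ref{weylfano}, then exhibit a symmetric smooth member for each family with $\wg_X\neq 0$), but as written it is a plan rather than a proof: the entire content of the theorem is the case-by-case construction of explicit smooth representatives together with the verification that the candidate automorphisms exist on $X$ and act as claimed on $\pic(X)$, and none of these constructions is actually carried out. You yourself identify the construction step as ``the main obstacle'' and then simply assert that it can be done for every remaining family; that assertion is exactly what needs proof, and it occupies all of \S 4 of the paper (symmetric divisors in products of projective spaces with smoothness checked by computer algebra, symmetric branch loci for the double covers \textnumero 2.6(b) and \textnumero 3.1, symmetric blow-up centers in $\pp^3$, $Q$, $(\pp^1)^3$ and $W$, toric Cox-coordinate arguments for \textnumero 3.31, 4.10, 4.12, 5.2, 5.3, and an iterated blow-up for \textnumero 5.1).

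Moreover, the uniform recipe you propose would fail for several families, so the gap is not merely one of detail. For the Picard rank~$2$ families \textnumero 2.6, 2.12, 2.21, 2.32 one cannot ``blow up two disjoint isomorphic subvarieties in symmetric position'' (that would raise the rank by $2$); there the nontrivial involution acts on a rank-$2$ lattice, e.g.\ for \textnumero 2.21 by $\pi^*H\mapsto 2\pi^*H-E$, and the paper obtains it from an automorphism of $X$ that does \emph{not} lift from $\aut(Q;C)$ but covers a birational involution of the quadric (Lemma \ref{autP:2-21}); \textnumero 2.12 likewise requires an alternate model as a complete intersection of $(1,1)$ divisors in $\pp^3\times\pp^3$. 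Family \textnumero 3.13 is not handled by ``three symmetric loci'' either: its center is a single $(2,2)$ curve, the correct Weyl group is $S_3$ (correcting Matsuki), and the realization uses a special member with $\aut(X)\cong \mathbb{G}_a\rtimes S_3$. Your description of \textnumero 5.3 via ``the product structure on $(\pp^1)^3$'' conflates it with \textnumero 3.27. Finally, your claim that the full Weyl group of a del Pezzo surface of degree $\leq 5$ is classically realized by automorphisms is false for degree $\leq 4$ (this is precisely why \textnumero 7.1--10.1 are set aside, and why $\autp(X)$ there is much smaller than $\wg_X$); it is harmless here only because those families are excluded from the statement.
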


In each case, we will see that $\autp(X)$ can in fact be realized by a finite subgroup of the automorphism group for which $\wg_X$ is nontrivial (see \S \ref{proof:classresult}). 

\begin{remark}  \label{autp:2.2exception}\begin{enumerate}[leftmargin=*]
     \normalfont\item[(i)] The result of the theorem is not true for $X$ in family \textnumero 2.2. In this case, the Kleiman-Mori cone $\neo(X)$ is generated by two extremal rays of different types, $C_1$ and $D_1$. Any automorphism of $X$ must keep each invariant, thus acts trivially on $\pic(X)$. So, $\autp(X)=0$, but $\wg_X=\zz/2\zz$.
     \item[(ii)] If $X$ is a smooth member of \textnumero 3.9 or \textnumero 4.2, we have an upper bound on $\autp(X)$ by Proposition \ref{imagealphainweylgroup}. It is not yet known whether the bound is attained in these two cases.
    \item[(iii)] A smooth Fano threefold in families \textnumero 6.1, \dots , \textnumero 10.1 is isomorphic to $\pp^1 \times S$, where $S$ is a del Pezzo surface of degree at most $5$. For all these cases, Matsuki's Weyl group coincides with the classical Weyl group of root system for the underlying $S$. It is known that the group $\autp(X)$ is usually much smaller than their complete Weyl group (see \S \ref{autP:P1timesdP}). 
 \end{enumerate}
 \end{remark}

As a consequence, we can compute the first cohomology group of Picard group. We have the following result (see \S \ref{sec:H1}).
\begin{thm}  \label{thm:H1main}
    Let $X$ be a smooth Fano threefold with Picard rank $\rho \leq 5$. Let $G\subseteq \aut(X)$ be a finite group. Then $H^1(G,\pic(X))=0$.
\end{thm}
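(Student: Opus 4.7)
The plan is to reduce via inflation--restriction to finite subgroups of $\autp(X)$, combine this with the containment $\autp(X) \subseteq \wg_X$ (Proposition \ref{imagealphainweylgroup}) and Table \ref{weylfano}, and then exhibit $\pic(X)$ as a permutation module for the Weyl-group action in each remaining case.

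For $G \subseteq \aut(X)$ finite, let $K$ be the kernel of the natural map $G \to \autp(X)$. Then $K$ acts trivially on the free abelian group $\pic(X) \cong \zz^{\rho}$, and since $K$ is finite,
\[
H^{1}(K,\pic(X)) = \operatorname{Hom}(K,\zz^{\rho}) = 0.
\]
The inflation--restriction sequence therefore gives $H^{1}(G,\pic(X)) \cong H^{1}(G/K,\pic(X))$, with $G/K$ embedding in $\autp(X) \subseteq \wg_X$, so I may assume $G \subseteq \wg_X$. If $\wg_X = 0$ the claim is immediate, and family 2.2 is handled directly since $\autp(X) = 0$ by Remark \ref{autp:2.2exception}(i).

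For each remaining family in Table \ref{weylfano}, the core step is to show that $\pic(X)$ is a \emph{permutation} $\wg_X$-module, i.e.\ carries a $\zz$-basis that $\wg_X$ permutes. Matsuki's reflections swap pairs of extremal rays of $\mori(X)$, so in each case an appropriate basis can be extracted from the Mori--Mukai description of the extremal contractions \cite{MM81classtable, MM86, fanography}. For instance, on $\pp^{1}\times \pp^{1}\times \pp^{1}$ (family 3.27) the three pullbacks of $\mathcal{O}(1)$ form a basis permuted by $S_{3}$, and for the $\rho=2$ families with $\wg_X = \zz/2\zz$ the two extremal rays form a basis swapped by the involution, so $\pic(X) \cong \zz[\zz/2\zz]$. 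Once $\pic(X)$ is realized as a permutation $\wg_X$-module, it restricts to a permutation $G$-module for every subgroup $G \subseteq \wg_X$; decomposing into orbits and applying Shapiro's lemma yields $H^{1}(G,\zz[G/H]) = H^{1}(H,\zz) = 0$ for every (finite) stabilizer $H$, hence $H^{1}(G,\pic(X)) = 0$.

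The main obstacle will be the explicit case-by-case check for the noncyclic Weyl groups, namely 4.1 ($S_{4}$), 4.2 ($(\zz/2\zz)^{2}$), and 5.3 ($\zz/2\zz \times S_{3}$), where the Mori cone can carry more than $\rho$ extremal rays and a $\wg_X$-stable $\zz$-basis is not immediate. In any such case I would fall back on the $\wg_X$-equivariant short exact sequence
\[
0 \to \zz\cdot K_X \to \pic(X) \to \pic(X)/\zz\cdot K_X \to 0,
\]
which, together with $H^{1}(G,\zz) = 0$, embeds $H^{1}(G,\pic(X))$ into $H^{1}(G,\pic(X)/\zz\cdot K_X)$ and reduces the computation to a rank $\rho-1$ quotient lattice that can be handled either by an explicit matrix calculation of $\ker(N)/\im(1-\sigma)$ for cyclic factors, or by the Lyndon--Hochschild--Serre spectral sequence with respect to a normal cyclic subgroup of $\wg_X$.
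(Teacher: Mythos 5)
Your overall strategy is the same as the paper's: exhibit a $\zz$-basis of $\pic(X)$ that is permuted by the image of $G$ in $\aut(\pic(X))$ (bounded by $\wg_X$ via Proposition \ref{imagealphainweylgroup} and Table \ref{weylfano}), and then use the standard vanishing of $H^1$ for permutation modules -- this is exactly the paper's Lemma \ref{std-permutations-module-fact} together with Lemma \ref{main-H1-result}. Your inflation--restriction step is harmless but unnecessary: once the image of $G$ permutes a basis, $\pic(X)$ is already a permutation $\zz[G]$-module. The real content, however, is the case-by-case production of such bases, which the paper carries out using Matsuki's tables of generators of $\nef(X)\cap\pic(X)$; this is nontrivial precisely in the cases your sketch passes over. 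For instance, for \textnumero 2.12 and \textnumero 2.21 the natural basis $\{H,E\}$ is \emph{not} permuted (cf.\ the action $\pi^*H\mapsto 2\pi^*H-E$, $E\mapsto 3\pi^*H-2E$ in Lemma \ref{autP:2-21}) and one must pass to $\{3H-E,\,H\}$, resp.\ $\{2H-E,\,H\}$; for \textnumero 3.9 and \textnumero 4.2, where $\autp(X)$ is not even determined, one needs the bases $\{2H-E_4,\,E_1-E_4,\,H\}$ and $\{H_1,H_2,S,E_5\}$ coming from Matsuki's data. Saying such bases ``can be extracted'' is the statement of Lemma \ref{main-H1-result}, not its proof.

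Moreover, your fallback for the noncyclic cases does not close the argument as stated: the sequence $0\to\zz\cdot K_X\to\pic(X)\to\pic(X)/\zz\cdot K_X\to 0$ only gives an injection $H^1(G,\pic(X))\hookrightarrow H^1(G,\pic(X)/\zz\cdot K_X)$, and the target is in general nonzero (already $\zz^2/\zz(1,1)$ with the swap has $H^1=\zz/2\zz$), so you are thrown back on the explicit computations you defer. In fact no fallback is needed: for \textnumero 4.1 the classes $H_1,\dots,H_4$ are visibly permuted by $S_4$ (Lemma \ref{autP:4-1}), for \textnumero 4.2 a permuted basis is read off from Matsuki's tables as above, and \textnumero 5.3 (and \textnumero 4.10) are handled through their toric descriptions (Lemma \ref{autp:5-3}). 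So the approach is the right one, but the proposal defers exactly the verification that constitutes the paper's proof, and its proposed substitute for the hardest cases would not by itself yield the vanishing.
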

In fact, the group $H^1(G,\pic(X))$ is known even for $\rho \geq 6$ since the first cohomology groups of del Pezzo surfaces are known.

\subsection*{Outline} We begin with some preliminaries in \S \ref{sec:preliminaries}. In \S\ref{imalphasection}, we recall the Weyl group of a smooth Fano threefold as defined by K. Matsuki. We also show that the group $\autp(X)$ is bounded above by this Weyl group. Consequently, we obtain that $\autp(X)$ is trivial for a majority of the families of smooth Fano threefolds. Section \ref{proof:classresult} is divided into ten subsections, each of which discusses the classification of $\autp(X)$ for a class of families for which $\wg_X$ is nontrivial. In this way, we complete the proof of Theorem \ref{them:classresult}. In \S \ref{sec:H1}, we discuss the first cohomology group of Picard group of smooth Fano threefolds and provide a proof of Theorem \ref{thm:H1main}. Finally, a table summarizing the entire classification is given in \S \ref{table}. 

\subsection*{Acknowledgements} I am grateful to my advisor, Alexander Duncan, for suggesting the problem and for his guidance. 
I also wish to thank Kenji Matsuki for helpful correspondence regarding family \textnumero~3.13.

\section{Preliminaries}  \label{sec:preliminaries}
\subsection{Notation and Conventions}
We work over the field of complex numbers. A deformation family of smooth Fano threefolds is denoted by \textnumero $\rho . N$. Here $\rho$ denotes the Picard rank of the family and $N$ is the number in the classification table in \cite{MM81classtable, MMerratum}. Throughout we work with smooth members of these families. So, by a Fano threefold $X$ in a deformation family, we will mean a smooth member of that family. Additionally, we only work with smooth varieties. 

Consider the natural action of $\operatorname{Aut}(X)$ on $\operatorname{Pic}(X)$ by pullback of line bundles,
\begin{equation}  \label{autpaction}
    \operatorname{Aut}(X) \xrightarrow{\alpha} \operatorname{Aut}\operatorname{Pic}(X).
\end{equation}
The image of a subgroup $G$ of $\aut(X)$, $\alpha(G)$ is denoted as $\autp(X,G)$. For $G=\aut(X)$, we write $\autp(X):=\autp(X,\aut(X))$.

\begin{prop}\label{thm:lefschetz}
   Let $Y$ be a smooth Fano variety of dimension at least $4$. Let $X$ be a smooth ample divisor (or a complete intersection) on $Y$ such that $X$ is a Fano threefold. Then $\pic(Y)\cong \pic(X)$.
\end{prop}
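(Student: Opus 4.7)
The plan is to reduce the statement to comparing second integral cohomologies, then invoke the classical Lefschetz hyperplane theorem together with Kodaira vanishing. Since both $X$ and $Y$ are smooth Fano, I can identify their Picard groups with $H^2(-,\mathbb{Z})$, and the dimension hypothesis $\dim Y \geq 4$ is precisely what is needed so that $H^2$ is in the Lefschetz range for the hypersurface section.

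First I would set up the cohomological identifications. From the exponential sequence
\[ H^1(Y,\mathcal{O}_Y) \to \pic(Y) \to H^2(Y,\mathbb{Z}) \to H^2(Y,\mathcal{O}_Y), \]
it suffices to show the flanking terms vanish. Because $-K_Y$ is ample, Kodaira vanishing applied to $\mathcal{O}_Y = \mathcal{O}_Y(K_Y) \otimes \mathcal{O}_Y(-K_Y)$ yields $H^i(Y,\mathcal{O}_Y) = 0$ for all $i > 0$; the same argument works verbatim on the Fano threefold $X$. Hence $\pic(Y) \cong H^2(Y,\mathbb{Z})$ and $\pic(X) \cong H^2(X,\mathbb{Z})$, and moreover the restriction map $\pic(Y) \to \pic(X)$ is identified with the restriction map in cohomology.

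Next I would apply the Lefschetz hyperplane theorem in the ample divisor case. For a smooth ample divisor $X \subset Y$ with $n := \dim Y$, the restriction $H^i(Y,\mathbb{Z}) \to H^i(X,\mathbb{Z})$ is an isomorphism for $i < n - 1$ and injective for $i = n - 1$. Taking $i = 2$ and using $n \geq 4$ gives the desired isomorphism on $H^2$, which combined with the previous paragraph yields $\pic(Y) \cong \pic(X)$.

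For the complete intersection case, I would handle it by induction on the codimension. Write $X = Y \cap H_1 \cap \cdots \cap H_c$ where each $H_j$ is an ample divisor, and introduce the intermediate varieties $Y_j := Y \cap H_1 \cap \cdots \cap H_j$, so $Y_0 = Y$ and $Y_c = X$. Choosing the $H_j$ generally (or observing that we may arrange the intersection sequence to be smooth since $X$ itself is smooth), each $Y_j$ is smooth, each $Y_{j+1}$ is a smooth ample divisor in $Y_j$, and $\dim Y_j = n - j \geq \dim X = 3$, so that each step remains in the Lefschetz range for $H^2$ except possibly the last. The mild subtlety — and the only real bookkeeping point — is that the intermediate $Y_j$ need not be Fano, so one cannot immediately invoke the exponential-sequence identification for them; however, all one needs is the iterated isomorphism $H^2(Y,\mathbb{Z}) \xrightarrow{\sim} H^2(X,\mathbb{Z})$ from repeated Lefschetz, combined with the Picard/$H^2$ identifications at the two Fano endpoints. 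This is the only place careful attention is required; everything else follows from standard tools.
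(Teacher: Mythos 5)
Your proof is correct and takes essentially the same route as the paper, which simply cites Iskovskikh for the identification $\pic = H^2(-,\zz)$ on Fano varieties (your exponential sequence plus Kodaira vanishing argument is exactly the content of that citation) and Lazarsfeld \S 3.1 for the Lefschetz step, complete intersections included. One small remark: your hedge ``except possibly the last'' is unnecessary, since in the final step the ambient $Y_{c-1}$ has dimension $4$ and $i=2<3$ is still within the Lefschetz range, so no separate treatment is needed.
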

\begin{proof}
From \cite[Proposition 1.15]{isko1}, we have $\pic(Y)=H^2(Y,\zz)$ and $\pic(X)=H^2(X,\zz)$.
    By the Lefschetz Hyperplane Theorem, \cite[3.1]{Laz}, we recall that $H^2(Y,\zz)\cong H^2(X,\zz)$, so the result follows.
\end{proof}

\subsection{Blow-ups} Let $X$ be a Fano threefold obtained by blowing up a Fano threefold $Y$ along a subvariety $Z\subset Y$.
Suppose that $Z = \sqcup_i Z_i$ is the disjoint union of irreducible subvarieties $Z_i$. Let $E_i$ denote the exceptional divisor of $Z_i$ and write $E:= \cup_i E_i$. The notation $\aut(Y;Z)$ will be used for the subgroup of automorphisms of $Y$ that preserve $Z$. Then we have

\begin{lemma}    \label{Lemma:autblowup}
    There is a natural map $\aut(Y;Z) \to \aut(X)$. Conversely, $g \in \aut(X)$ comes from $\aut(Y)$ if and only if $g(E)=E$. 
\end{lemma}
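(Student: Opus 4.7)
The plan is to construct the natural map $\aut(Y;Z) \to \aut(X)$ via the universal property of the blow-up, and then to prove the converse using the rigidity lemma together with a Mori-theoretic characterization of $\pi$.

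For the natural map, given $\phi \in \aut(Y;Z)$, I apply the universal property of the blow-up to $\phi \circ \pi : X \to Y$. Since $\phi(Z) = Z$, the pullback ideal $(\phi \circ \pi)^{-1}(\mathcal{I}_Z) \cdot \mathcal{O}_X$ coincides with $\pi^{-1}(\mathcal{I}_Z) \cdot \mathcal{O}_X = \mathcal{O}_X(-E)$, which is invertible. Hence $\phi \circ \pi$ factors uniquely as $\pi \circ \tilde\phi$ for some $\tilde\phi : X \to X$, and applying the same construction to $\phi^{-1}$ produces its two-sided inverse, so $\tilde\phi \in \aut(X)$. The ``only if'' half of the converse drops out of $\pi \circ \tilde\phi = \phi \circ \pi$: it forces $\pi(\tilde\phi(E)) = \phi(Z) = Z$, hence $\tilde\phi(E) \subseteq E$, and equality follows symmetrically.

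For the ``if'' direction, suppose $g \in \aut(X)$ satisfies $g(E) = E$ and aim to factor $\pi \circ g$ through $\pi$. Since $\pi$ is proper with $\pi_* \mathcal{O}_X = \mathcal{O}_Y$, the rigidity lemma reduces this to showing that $\pi \circ g$ is constant on every fiber of $\pi$. The only positive-dimensional fibers are the projective-space fibers $E_z = (\pi|_E)^{-1}(z)$ for $z \in Z$, so it suffices to verify that $g$ carries each $E_z$ into a single fiber of $\pi|_E$. The key input is the Mori-theoretic description of $\pi$: the extremal rays of $\neo(X)$ contracted by $\pi$ are precisely those lying in the half-space $\{[C] : C \cdot E < 0\}$, and they are spanned by fiber classes of $\pi|_E$ over the irreducible components of $Z$. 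From $g(E) = E$ we obtain $g^*[E] = [E]$, so the induced action of $g_*$ on $N_1(X)$ preserves pairings with $[E]$ and therefore permutes this collection of extremal rays. Consequently $g$ sends each $E_z$ to a single fiber of $\pi|_E$. The rigidity lemma then produces $\phi : Y \to Y$ with $\pi \circ g = \phi \circ \pi$; applying the same reasoning to $g^{-1}$ yields its inverse, and $\phi(Z) = \pi(g(E)) = Z$ places $\phi$ in $\aut(Y;Z)$, giving $g = \tilde\phi$ by uniqueness.

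The main obstacle is the Mori-theoretic characterization of the contracted extremal rays by $E$-negativity; without it, $g$ could in principle break the fibration structure on $E$ (for instance by swapping the two rulings when a component of $E$ is isomorphic to $\pp^1 \times \pp^1$), which would prevent descent to $Y$. Everything else is routine application of the universal property of blow-ups and the rigidity lemma.
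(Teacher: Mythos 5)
The paper itself states this lemma without proof, treating it as standard, so there is no internal argument to compare against; judged on its own terms, your construction of the lift via the universal property of the blow-up and the rigidity-lemma skeleton for the converse are sound. The genuine problem is that the step you yourself single out as ``the key input'' --- that the extremal rays of $\neo(X)$ meeting the half-space $\{[C]\cdot E<0\}$ are exactly the rays spanned by the fibre classes of $\pi|_E$ --- is asserted, not proved, and it is precisely here that the hypotheses of the lemma (the $Z_i$ smooth and disjoint, and $Y$ itself Fano) must enter. Without them the claim is false: for the blow-up of a smooth rational curve with normal bundle $\mathcal{O}(-1)\oplus\mathcal{O}(-1)$ both rulings of $E\cong\pp^1\times\pp^1$ are $E$-negative and can be interchanged --- exactly the ruling-swap scenario you flag as the danger but never exclude. (A smaller imprecision: in the ``only if'' direction you invoke $\phi(Z)=Z$, which is not given when $g$ merely descends to some $\phi\in\aut(Y)$; argue instead that $\pi(g(E))=\phi(Z)$ has codimension $\geq 2$, so the divisor $g(E)$ is $\pi$-exceptional and hence contained in $E$.)

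The missing paragraph can be supplied as follows, and it shows where Fano-ness of $Y$ is used. Let $R$ be an extremal ray with $R\cdot E<0$; then $R\cdot E_i<0$ for some $i$, so every curve in $R$ lies in $E_i$. Since $X$ is a smooth Fano threefold, the contraction of $R$ is $K_X$-negative, and by Mori's classification it is either of fibre type (impossible, as its locus lies in $E_i$) or divisorial with irreducible exceptional divisor, which must be $E_i$. If $E_i$ is contracted to a point, all curves of $E_i$ are numerically proportional in $X$; when $Z_i$ is a point this is the $\pi$-fibre ray itself, and when $Z_i$ is a curve it is impossible since a fibre and a multisection pair differently with $\pi^*A$ for $A$ ample on $Y$. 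If $E_i$ is contracted to a curve along a ruling other than $\pi|_{E_i}$, then $E_i\cong\pp^1\times\pp^1$, forcing $N_{Z_i/Y}\cong\mathcal{O}(d)\oplus\mathcal{O}(d)$, and the second ruling class $g$ satisfies $g\cdot E=d$; ampleness of $-K_Y$ gives $-K_Y\cdot Z_i=2d+2>0$, hence $d\geq 0$, contradicting $R\cdot E<0$. So $R$ is a $\pi$-fibre ray, and your argument then goes through. Alternatively, there is a shorter standard route that bypasses the cone analysis: $g$ with $g(E)=E$ restricts to an automorphism of $X\setminus E\cong Y\setminus Z$, hence induces a birational self-map of $Y$ that is an isomorphism in codimension one and therefore fixes the class $K_Y$; since $-K_Y$ is ample, such a map is biregular, it visibly preserves $Z$, and its lift agrees with $g$ on a dense open set, so $g$ descends. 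Either way the Fano hypothesis on $Y$ is doing real work, and your write-up as it stands does not invoke it.
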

In many of our examples, to show that $\autp(X,G)$ is nontrivial, we construct an automorphism in $\aut(X)$ that is a lift of an automorphism of $Y$ that preserves $Z$ but acts nontrivially on the exceptional locus $E$. However, in some cases, the nontriviality of $\autp(X,G)$ arises from the existence of an automorphism $\sigma \in \aut(X)$ that does not lift from any automorphism in $\aut(Y;Z)$; see, for instance, Lemma \ref{autP:2-21}. 

\smallskip
We will frequently use a computer algebra system to verify the smoothness and irreducibility of an algebraic subvariety in a multiprojective space. 
\begin{remark}  \label{rmk:magmause}
    \normalfont Let $X:= \mathbb{V}(f_1, \dots , f_r) \subset \pp^{N_1}\times \dots \times \pp^{N_r}$, where the $f_i$ are multihomogeneous polynomials whose coefficients lie in $\cc(\{a_{i}\})$ for some indeterminates $a_{i}$. One can use Magma or SageMath to test the smoothness and irreducibilty of $X$ (\cite{magma, sagemath}). Since smoothness is an open condition, the parameters $a_i$ may then be specialized to integers. For an explicit example of such a computation in Magma, see \cite[\S 5.4]{calabi}.
\end{remark}

\section{\texorpdfstring{$\autp(X)$}{AutP(X)} and Weyl group}  \label{imalphasection}
Let $X$ be a Fano threefold. Recall that $\autp(X,G)$ denotes the image of $G \hookrightarrow \aut(X) \xrightarrow{\alpha} \aut(\pic(X))$. In \cite[Definition 2.8]{hmonpaper}, the authors describe a number of combinatorial properties that are preserved by the group $\autp(X,G)$. The Weyl group of a Fano threefold preserves the same type of information, but has the advantage of being completely classified by Matsuki in \cite[III]{Mat95} and \cite{Mat23}. 
 
We now introduce the necessary machinery to define it. Recall the following constructions from \cite[III]{Mat95}. Let 
\[
T:= \operatorname{Spec}\bigoplus_{m\geq 0}H^0(X,\operatorname{Sym}^m(-K_X))
\]
be a cone over $X$.
The total space of $K_X$ is defined as
\[
Y := \underline{\operatorname{Spec}}\bigoplus_{m\geq 0}\operatorname{Sym}^m(-K_X),
\]
where $\underline{\operatorname{Spec}}$ denotes the relative spectrum of $\bigoplus \operatorname{Sym}^m(-K_X)$. This comes with a projection map $f: Y \rightarrow T$ that contracts the zero section of $Y$ to a point in $T$. Note that the zero section of $Y$ can be identified with $X$, so there is an embedding $X \subset Y$ such that the exceptional locus is Exc$(f) = X$. 

We also recall the following definitions from \cite{kawamata}:
\begin{itemize}
    \item $\operatorname{Amp}(Y/T)$ is the convex cone generated by classes of relatively ample divisors in $N^1(Y/T)$. 
    \item The nef cone, $\nef(Y/T)\subset N^1(Y/T)$ is the closure of $\operatorname{Amp}(Y/T)$. It is the dual of Kleiman-Mori cone, $\neo(Y/T)$ with respect to the intersection pairing.
    \item We say $D \in \pic(Y)$ is $f$-movable if 
   \begin{enumerate}
       \item[(i)] $f_*D \neq 0$,
       \item[(ii)] the cokernel of the natural homomorphism $f^*f_*D \to D$ has support of codimension at least $2$. 
   \end{enumerate}
   \smallskip
   We define $\overline{Mov}(Y/T)$ as the closed convex cone in $N^1(Y/T)$ generated by the classes of $f$-movable divisors.
\end{itemize}

Since any two minimal models in a birational class are isomorphic in codimension one, $N^1(Y_i/T)$ are (canonically) isomorphic for any minimal models $Y_i/T$. Let us write $N^1$ for $N^1(Y_i/T)$ for any $i$. The movable cones $\overline{Mov}(Y_i/T)$ can also be identified to give the cone $\overline{Mov}$ in $N^1$ (see \cite[II]{Mat95}). 

The Kawamata-Koll\'ar-Mori-Reid (KKMR) decomposition theorem says that  $\overline{Mov}$ decomposes into finitely many polyhedral cones that are determined by minimal models of $Y/T$. 

\begin{thm}\cite[Theorem II-4, KKMR decomposition]{Mat95}  \label{Mat:wg}
    We have the decomposition 
    \[
    \overline{Mov} = \text{ the closure of } \Big(\coprod_i \nef(Y_i/T) \Big)
    \]
    where the symbol $\coprod$ means that no two cones $\nef(Y_i/T)$ and $\nef(Y_j/T)$ intersect in their interiors for $i\neq j$ and $\# \{i\} < \infty$. 
\end{thm}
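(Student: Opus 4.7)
The plan is to establish the KKMR decomposition by verifying four properties: each cone $\nef(Y_i/T)$ is contained in $\overline{Mov}$; the interiors of distinct such cones are disjoint; their union is dense in $\overline{Mov}$; and only finitely many appear. Throughout, all the spaces $N^1(Y_i/T)$ should be identified with a common $N^1$ via the codimension-one isomorphisms between minimal models, so that strict transforms give canonical identifications of numerical classes (as already set up in the paragraph preceding the theorem).

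For containment, if $D$ is $f_i$-nef on $Y_i$, the relative base-point-free theorem of Kawamata yields that $mD$ is $f_{i*}$-globally generated for $m \gg 0$, so its base locus has codimension at least two; transporting back through the codimension-one isomorphism $Y_i \dashrightarrow Y$ shows $D$ is $f$-movable on $Y$. For interior disjointness, any class in the common interior of $\nef(Y_i/T)$ and $\nef(Y_j/T)$ is relatively ample on both models, and since each $Y_i$ is recovered as $\operatorname{Proj}_T$ of the section ring of a relatively ample class, one concludes $Y_i \cong Y_j$ over $T$. Finiteness follows from the finiteness of flopping extremal rays over $T$ in the threefold setting: only finitely many numerically distinct curves can be contracted by flops, so only finitely many minimal models arise up to isomorphism.

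The key step is density. Given an arbitrary $f$-movable class $D$, I would run a relative $D$-MMP over $T$ starting from $Y$. A crucial structural observation is that $K_Y$ is trivial in this setup, since by adjunction the canonical bundle of the total space of $K_X$ is trivial; consequently the MMP over $T$ produces only flops, never flips. The $f$-movability of $D$ further rules out divisorial contractions arising as $D$-negative extremal rays, because the strict transform of a movable divisor cannot pick up an exceptional divisorial component without contradicting the movability condition. Termination of the resulting flop sequence is classical for threefolds, and the final minimal model $Y_i/T$ has the strict transform of $D$ nef, placing $D$ in $\nef(Y_i/T)$ after identification. The main obstacle I anticipate is the careful bookkeeping needed to verify that $f$-movability genuinely persists through each flop and truly obstructs divisorial extremal contractions — this is precisely where the distinction between the movable and the effective cones matters, and the argument rests on the negativity lemma together with standard MMP machinery specialized to threefolds.
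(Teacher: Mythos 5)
This theorem is not proved in the paper at all: it is quoted from Matsuki \cite{Mat95} (Theorem II-4), so there is no internal argument to compare yours against. What you have written is an attempt to reprove the cited result, and your overall strategy is the expected Kawamata--Matsuki one: containment of each $\nef(Y_i/T)$ in $\overline{Mov}$ via relative base-point-freeness, disjointness of interiors via recovering $Y_i$ as a relative $\operatorname{Proj}$, and density by running a $D$-MMP over $T$ which, since $K_Y$ is trivial, consists only of flops. That outline is the right one.

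There is, however, a concrete gap in how you invoke the key inputs. In this paper's setting $Y$ is the total space of $K_X$ over the Fano \emph{threefold} $X$, so $Y$, $T$ and all the models $Y_i$ are \emph{fourfolds}, and $f\colon Y\to T$ contracts the divisor $X\subset Y$ to the vertex of the cone. Your two decisive appeals to ``classical threefold'' facts --- termination of the flop sequence and finiteness of flopping extremal rays --- are therefore applied outside their scope and do not cover the situation at hand. The needed statements must either be taken in the correct generality (relative, crepant, with $f$ birational, so that every divisor is $f$-big and one can get semiampleness, MMP with scaling, and finiteness of marked/ample models in the style of Kawamata or BCHM), or obtained by exploiting the $\cc^\ast$-cone structure to reduce to birational geometry of the threefold $X$ itself, which is essentially how Matsuki proceeds. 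Relatedly, your finiteness step (``only finitely many numerically distinct flopping curves, hence finitely many models'') is not a proof even in dimension three: the assertion $\#\{i\}<\infty$ is the deepest part of the cited theorem (finiteness of marked relative minimal models) and cannot be dispatched in one line. A smaller point: the relative base-point-free theorem requires $D-K_{Y_i}$ to be $f$-big as well as $f$-nef; here this holds only because $f$ is birational, and that hypothesis should be stated rather than left implicit.
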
  

\begin{defn}\cite[III.1]{Mat95}  \label{def:WG}
    \normalfont Let $X$ be a Fano threefold. The Weyl Group of $X$, denoted as $\wg_X$, is characterized as the group of automorphisms of $\pic(Y/T)$
    \begin{enumerate}
        \item[(a)] fixing the class of the exceptional divisor for $f$,
        \item[(b)] preserving the KKMR decomposition. 
    \end{enumerate}
\end{defn}
From \cite[III-3-1]{Mat95}, we see that $N^1(X) \cong N^1(Y/T)$, and thus we may identify $\pic(Y/T) \cong \pic(X)$. This allows us to see $\wg_X$ as a subgroup of $\operatorname{Aut}(\operatorname{Pic}(X))$. 
\vspace{0.3cm}

\begin{prop}\label{imagealphainweylgroup}
   Let $X$ smooth Fano threefold and $G\subseteq \aut(X)$ be a group. Then $\autp(X,G) \subseteq \wg_X$. In particular, we have $\autp(X)\subseteq \wg_X$. 
\end{prop}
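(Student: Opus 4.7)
The plan is to verify directly that any $\sigma \in \autp(X,G)$ satisfies the two defining properties of $\wg_X$ in Definition \ref{def:WG}. The key observation is that the constructions $Y$ and $T$ are functorial in $X$, so every $g \in \aut(X)$ lifts canonically to automorphisms $\hat{g}\colon Y \to Y$ and $\tilde{g}\colon T \to T$ compatible with the contraction $f$. Once such lifts exist, verifying (a) and (b) for $\hat{g}^*$ on $\pic(Y/T)$ is essentially formal.

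First I would construct these lifts. Since $-K_X$ is intrinsically attached to $X$, the group $\aut(X)$ acts naturally on the sheaf of graded $\mathcal{O}_X$-algebras $\bigoplus_{m\geq 0}\operatorname{Sym}^m(-K_X)$ and on the graded ring $\bigoplus_{m\geq 0}H^0(X,\operatorname{Sym}^m(-K_X))$. Taking relative and absolute $\operatorname{Spec}$ respectively yields $\hat{g}$ and $\tilde{g}$ satisfying $\tilde{g}\circ f = f \circ \hat{g}$. Under the canonical identification $\pic(X) \cong \pic(Y/T)$ from \cite[III-3-1]{Mat95}, the pullback action $\alpha(g)$ on $\pic(X)$ matches the action of $\hat{g}^*$ on $\pic(Y/T)$, so it suffices to check (a) and (b) for $\hat{g}^*$.

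Condition (a) is then immediate: $\operatorname{Exc}(f)$ is the zero section of the line bundle $Y \to X$, and $\hat{g}$ is fiberwise linear (being built from the canonical $g$-action on $-K_X$), so it preserves the zero section and hence fixes its class. For condition (b), I would invoke Theorem \ref{Mat:wg}: the KKMR decomposition of $\overline{Mov}$ depends only on the birational geometry of $Y/T$ and its finite collection of minimal models $\{Y_i/T\}$. An automorphism $\hat{g}$ of $Y/T$ sends minimal models to minimal models by conjugating the birational structure, so it permutes the $Y_i$ and consequently permutes the cones $\nef(Y_i/T)$, preserving the decomposition.

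The main subtlety I expect is making rigorous the claim that the identification $\pic(X) \cong \pic(Y/T)$ is $\aut(X)$-equivariant, so that $\alpha(g)$ really corresponds to $\hat{g}^*$ under it. This amounts to unwinding the construction in \cite[III-3]{Mat95} and checking naturality at each step. Once this compatibility is established, conditions (a) and (b) follow from the functoriality of canonical constructions, and the ``in particular'' statement for $G = \aut(X)$ follows by taking the union over all $g$.
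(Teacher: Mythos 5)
Your proposal is correct and follows essentially the same route as the paper: lift $g$ to automorphisms of the total space $Y$ of $K_X$ and of the cone $T$ (the paper does this via a choice of isomorphism $\eta\colon g^*K_X \xrightarrow{\sim} K_X$ rather than appealing to canonical functoriality, but the effect is the same), observe that the zero section $X = \operatorname{Exc}(f)$ is preserved, and conclude that the lifted automorphism permutes the relative minimal models $Y_i/T$, hence the nef chambers, so the KKMR decomposition is preserved.
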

\begin{proof}
Let $g\in \aut(X)$ be such that $g^*\in \autp(X)$. Since $Y$ is the total space of $K_X$, $g$ lifts and we get an isomorphism $\phi_g : g^*Y \xrightarrow{\sim} Y$, where $g^*Y := X \times_{X,g} Y$. 
Observe that $g^*Y$ is the total space of $g^*K_X$. Now as $g$ fixes the class of $K_X$, we get an isomorphism $Y \to g^*Y$ for a choice of $\eta :g^*K_X \xrightarrow{\sim} K_X$. Combining, we get an automorphism $\tilde{g}: Y \to Y$.

\[\begin{tikzcd}
	T && T \\
	Y & {g^*Y} & Y \\
	& X & X
	\arrow["\tilde{\phi_g}", from=1-1, to=1-3]
	\arrow["f", from=2-1, to=1-1]
	\arrow["\sim", from=2-1, to=2-2]
    \arrow["\pi", from=2-1, to=3-2]
	\arrow["{\phi_g}", from=2-2, to=2-3]
	\arrow[from=2-2, to=3-2]
	\arrow["f"', from=2-3, to=1-3]
	\arrow["\pi", from=2-3, to=3-3]
	\arrow["g"', from=3-2, to=3-3]
\end{tikzcd}\]
Moreover, the same choice of $\eta$ induces an automorphism 
\begin{align*}
\phi: H^0(X,\operatorname{Sym}^m(-K_X)) &\to H^0(X,\operatorname{Sym}^m(-g^*K_X))\cong H^0(X,\operatorname{Sym}^m(-K_X)) \\
s &\mapsto g^*s \mapsto \eta_*(g^*s).
\end{align*}
This induces $\tilde{\phi_g}: T \xrightarrow{\sim} T$. From the construction, the class of $X$ in $Y$, which is also the exceptional divisor for $f$ is preserved under the action of $g$. 

Consider a relative minimal model $Y_i/T$ of $Y/T$ and its pullback $Y_i\times_T T$ via $\tilde{\phi}_g$. Since $\tilde{\phi}_g$ is an isomorphism, $K_{(Y_i\times_T T)/T}$ is also nef, consequently, $(Y_i \times_T T)/T$ is a (relative) minimal model of $Y/T$ in the same birational class as $Y_i/T$. We can now write $Y_i\times_T T$ as $Y_{g(i)}$ for some index $g(i)$ which may be different from $i$. 

\[\begin{tikzcd}
	{Y_{g(i)}} & {Y_i} \\
	T & T
	\arrow[dashed, "\psi_g", from=1-1, to=1-2]
	\arrow[from=1-1, to=2-1]
	\arrow[from=1-2, to=2-2]
	\arrow["{\tilde{\phi_g}}"', from=2-1, to=2-2]
\end{tikzcd}\]
Let $D \in \operatorname{Amp}(Y_i/T)$. Then $\psi_g^*D$ is also relatively ample on $Y_{g(i)}$. So, we have shown that $\psi_g^*\operatorname{Amp}(Y_i/T)\subseteq \operatorname{Amp}(Y_{g(i)}/T)$. 
In other words, $g^*$ induces a map on the cone $\coprod \nef(Y_i/T)$ that sends an element in the interior of one chamber to the interior of another chamber, and thus the KKMR decomposition is preserved.  
\end{proof}

We observe that $\alpha(G)=\autp(X,G)$ is trivial for all smooth Fano threefolds with rank $\rho =1$. This is because $\operatorname{Pic}(X)$ has a single generator $[K_X]$ which must be fixed under the linear action. Alternatively, since the Weyl group is trivial, $\autp(X,G)$ must also be trivial by Proposition \ref{imagealphainweylgroup}.

\smallskip
From Proposition \ref{imagealphainweylgroup} and Table \ref{weylfano}, we immediately get 
\begin{coro}   \label{im=0forsure}
    Let $X$ be a Fano threefold. Then $\autp(X)=0$ for all deformation families of Fano threefolds except possibly,
    \begin{enumerate}
        \item[(i)] $5$ families with $\rho =2$, 
        \begin{center}
            \textnumero 2.2,\; \textnumero 2.6,\; \textnumero 2.12,\; \textnumero 2.21, \; \textnumero 2.32.
        \end{center}
        \item[(ii)] $12$ families with $\rho =3$,
        \begin{center}
            \textnumero 3.1,\; \textnumero 3.3,\; \textnumero 3.7,\; \textnumero 3.9,\; \textnumero 3.10,\;\textnumero 3.13,\; \textnumero 3.17,\; \textnumero 3.19,
            \; \textnumero 3.20,\; \textnumero 3.25,\; \textnumero 3.27,\; \textnumero 3.31.
        \end{center}
        \item[(iii)] $10$ families with $\rho =4$,
        \begin{center}
            \textnumero 4.1,\; \textnumero 4.2, \; \textnumero 4.3,\; \textnumero 4.4,\; \textnumero 4.6,\; \textnumero 4.7, \textnumero 4.8,\; \textnumero 4.10,\; \textnumero 4.12,\; \textnumero 4.13.
        \end{center}
        \item[(iv)] $8$ families with $\rho \geq 5$,
        \begin{center}
        \textnumero 5.1,\; \textnumero 5.2,\; \textnumero 5.3,\; \textnumero 6.1,\; \textnumero 7.1,\; \textnumero 8.1,\; \textnumero 9.1,\; \textnumero 10.1.
        \end{center}
    \end{enumerate}
\end{coro}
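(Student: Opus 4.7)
The plan is to deduce the corollary directly from Proposition \ref{imagealphainweylgroup}, which gives the containment $\autp(X) \subseteq \wg_X$ for every smooth Fano threefold $X$. Whenever $\wg_X$ is trivial, this inclusion forces $\autp(X) = 0$, so the task reduces to reading off the exceptional list: I must identify exactly the families for which $\wg_X$ is possibly nonzero, and verify that their complement matches items (i)--(iv).

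For families with $\rho \leq 5$, I would simply quote Table \ref{weylfano}. The sentence introducing that table asserts that $\wg_X = 0$ for every $\rho \leq 5$ family not listed there, so the families with potentially nontrivial $\wg_X$ are exactly the ones appearing in the table. Bookkeeping then recovers the $5$ families of rank $2$, the $12$ families of rank $3$ (nine with $\wg_X = \zz/2\zz$ and three with $\wg_X = S_3$), the $10$ families of rank $4$, and the three rank-$5$ families (\textnumero 5.1, \textnumero 5.2, \textnumero 5.3), matching (i), (ii), (iii), and the first part of (iv).

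For families with $\rho \geq 6$, I would invoke the remark made just after Table \ref{weylfano}: any such smooth Fano threefold is isomorphic to $\pp^1 \times S$ for a del Pezzo surface $S$ of degree at most $5$, and $\wg_X$ coincides with the classical Weyl group of $S$. Since the Weyl group of a del Pezzo surface of degree at most $5$ is always nontrivial (as recorded in \cite[\S\S 6.3--6.7]{dolgaisko_planecremonagroups}), all five families \textnumero 6.1, \textnumero 7.1, \textnumero 8.1, \textnumero 9.1, \textnumero 10.1 must be included in the exceptional list, completing (iv).

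There is no real obstacle: the argument is a direct application of Proposition \ref{imagealphainweylgroup} combined with the classification summarized in Table \ref{weylfano} and the $\pp^1 \times S$ structure for $\rho \geq 6$. The only care required is the clerical one of matching the counts $5 + 12 + 10 + 8 = 35$ against the families of Table \ref{weylfano} plus the five product families.
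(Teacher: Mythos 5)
Your proposal is correct and follows exactly the paper's route: the paper derives this corollary immediately from Proposition \ref{imagealphainweylgroup} together with Table \ref{weylfano} (and the $\pp^1\times S$ description with its nontrivial del Pezzo Weyl groups for $\rho\geq 6$), which is precisely the containment-plus-bookkeeping argument you give.
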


For each of the families listed in the above corollary, we now find whether $\autp(X)=0$ or the upper bound on it given by the Weyl group can be realized for a group $G\subseteq \aut(X)$. 

\section{Classification of \texorpdfstring{$\autp(X,G)$}{AutP(X,G)}}  \label{proof:classresult}
\subsection {Divisors on a products of projective spaces}   \label{im(alpha)divisortype}
Let $X$ be a smooth Fano threefold that is a member of any of the five deformation families
\begin{center}
\textnumero 2.6\;(a),\; \textnumero 2.32,\; \textnumero 3.3,\; \textnumero 3.17, \; \textnumero 4.1.
\end{center}
For each family, we use their Mori-Mukai description in \cite[Tables 2-4]{MM81classtable}. A smooth Fano threefold in \textnumero 2.6 is either isomorphic to a divisor of bidegree $(2,2)$ in $\pp^2\times\pp^2$ (\textnumero 2.6 (a)), or a double cover (\textnumero 2.6 (b), see Lemma \ref{autP:2-6b)}).

If $X$ is a member of \textnumero 2.6 (a) or \textnumero 2.32, it can be described as a divisor on $\pp^2\times\pp^2$. Let $H_1$ and $H_2$ denote the pullbacks to $\pp^2 \times \pp^2$ of the hyperplane classes from the first and second factors of $\pp^2$, respectively.
 Then from Theorem \ref{thm:lefschetz}, we have $\pic(X)\cong\zz[H_1]\oplus \zz[H_2]$. Let $([x_0,x_1,x_2],[y_0,y_1,y_2])$ be coordinates on $\pp^2\times \pp^2$.

\begin{lemma} \label{autP:2-6a)}
There exists an $X$ in \textnumero 2.6 (a) such that $\autp(X)=\wg_X \cong \zz/2\zz$. 
\end{lemma}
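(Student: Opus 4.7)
The plan is to exploit the symmetry between the two $\pp^2$ factors. Since $\wg_X \cong \zz/2\zz$ and Proposition \ref{imagealphainweylgroup} gives $\autp(X) \subseteq \wg_X$, it suffices to produce a single smooth member $X$ of \textnumero 2.6 (a) together with an automorphism $\sigma \in \aut(X)$ whose pullback $\sigma^*$ acts nontrivially on $\pic(X)$. The natural candidate for $\sigma$ is the restriction to $X$ of the involution
\[
\tau \colon \pp^2 \times \pp^2 \longrightarrow \pp^2 \times \pp^2, \qquad ([x_0,x_1,x_2],[y_0,y_1,y_2]) \longmapsto ([y_0,y_1,y_2],[x_0,x_1,x_2]),
\]
which swaps the two factors, and hence exchanges $H_1$ and $H_2$ in $\pic(\pp^2 \times \pp^2)$.

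Next I would construct $X$ as the vanishing locus of a symmetric $(2,2)$-form. Concretely, choose a bihomogeneous polynomial $f(x,y) \in H^0(\pp^2 \times \pp^2, \mathcal{O}(2,2))$ satisfying $f(x,y) = f(y,x)$; the space of such forms is a linear subspace of the full space of $(2,2)$-forms, and any such $f$ gives a $\tau$-invariant divisor $X := \mathbb{V}(f)$. In view of Lemma \ref{Lemma:autblowup}-style reasoning (here simply that $\tau$ preserves $X$ set-theoretically), $\tau$ restricts to an automorphism $\sigma \in \aut(X)$. To confirm that a smooth member exists within the symmetric locus, I would pick one explicit symmetric $f$ (for instance a general sum of the monomials $x_i x_j y_k y_l + x_k x_l y_i y_j$) and verify smoothness and irreducibility of $X$ using a computer algebra system, as outlined in Remark \ref{rmk:magmause}; since smoothness is an open condition on the parameter space of symmetric $(2,2)$-forms, a single explicit witness is enough.

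It remains to identify the $\sigma^*$-action on $\pic(X)$. By Proposition \ref{thm:lefschetz} the restriction map $\pic(\pp^2 \times \pp^2) \to \pic(X)$ is an isomorphism sending $H_1, H_2$ to generators of $\pic(X)$, and it is equivariant for the $\tau$-action on the ambient variety and the $\sigma$-action on $X$. Since $\tau^* H_1 = H_2$ and $\tau^* H_2 = H_1$, the pullback $\sigma^*$ swaps the two generators of $\pic(X)$; in particular $\sigma^* \neq \operatorname{id}$, so $\autp(X, \langle \sigma \rangle) \cong \zz/2\zz$ and we conclude $\autp(X) = \wg_X \cong \zz/2\zz$.

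The only nontrivial step is verifying that some symmetric $(2,2)$-form cuts out a smooth threefold in $\pp^2 \times \pp^2$; this is not automatic from Bertini, because the symmetric sub-linear-system could in principle have a base locus or force singularities along the diagonal. I expect this to be the main obstacle in practice, but a short Magma computation on an explicit example settles it, after which openness of smoothness guarantees the same conclusion for a general symmetric $(2,2)$-divisor.
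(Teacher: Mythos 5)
Your proposal is correct and matches the paper's proof essentially step for step: the paper likewise takes the factor-swapping involution on $\pp^2\times\pp^2$, cuts out $X$ by a $(2,2)$-form made $\sigma$-invariant by imposing the symmetry condition on its coefficients (their $a_{ij}=a_{ji}$ is your $f(x,y)=f(y,x)$), checks smoothness and irreducibility of one explicit member via Remark \ref{rmk:magmause}, and uses Proposition \ref{thm:lefschetz} to see that $\sigma^*$ swaps the two generators of $\pic(X)$, giving $\autp(X,\langle\sigma\rangle)\cong\zz/2\zz=\wg_X$. Your closing caveat about Bertini is handled the same way in the paper, namely by exhibiting an explicit smooth symmetric witness rather than by a general-position argument.
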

\begin{proof}
A member $X$ of the family \textnumero 2.6 (a) is a divisor of bidegree $(2,2)$ on $\pp^2\times\pp^2$. Consider the involution of $\mathbb{P}^2 \times \mathbb{P}^2$ 
\[
\sigma: ([x_0:x_1:x_2],[y_0:y_1:y_2]) \mapsto ([y_0:y_1:y_2],[x_0:x_1:x_2]),
\]
it acts on $\pic(X)$ by swapping its two generators. Consider the equation
\begin{equation}    \label{eqn:2.6a}
    x_0^2 Q_1 + x_1^2 Q_2+ x_2^2 Q_3 +  x_0x_1Q_4 + x_1x_2 Q_5 +x_0x_2Q_6 =0.
\end{equation}
where each $Q_i(y_0,y_1,y_2)$ is homogeneous defined as
\[
a_{i1} y_0^2 + a_{i2} y_1^2 + a_{i3}y_2^2 + a_{i4}y_0y_1 + a_{i5}y_1y_2 + a_{i6}y_0y_2 =0, \hspace{0.5cm} \text{ for } 1\leq i \leq 6.
\]
Assume that $a_{ij} = a_{ji}$ for $1\leq i,j\leq 6$. 
Then the equation \eqref{eqn:2.6a} defines a smooth and $\sigma$-invariant $X$ (Remark \ref{rmk:magmause}). So $\autp(X,\langle \sigma \rangle) \cong \zz/2\zz$. 
\end{proof}

\begin{lemma} \label{autP:2-32}
There exists an $X$ in \textnumero 2.32 such that $\autp(X)=\wg_X \cong \zz/2\zz$. 
\end{lemma}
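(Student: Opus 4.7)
The plan is to realize a smooth member of family \textnumero~2.32 as a divisor of bidegree $(1,1)$ on $\pp^2\times\pp^2$ that is preserved by the swap involution
\[
\sigma: ([x_0:x_1:x_2],[y_0:y_1:y_2]) \mapsto ([y_0:y_1:y_2],[x_0:x_1:x_2])
\]
already used in Lemma \ref{autP:2-6a)}. Writing such a divisor as
\[
X = \mathbb{V}\Big(\sum_{i,j=0}^{2} a_{ij}\, x_i y_j\Big) \subset \pp^2\times\pp^2,
\]
the $\sigma$-invariance $\sigma^*X = X$ is equivalent to the symmetry condition $a_{ij}=a_{ji}$ on the coefficient matrix $A=(a_{ij})$. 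For smoothness, the Jacobian criterion yields the equations $Ay=0$ (from $\partial/\partial x_i$) and $A^\top x = 0$ (from $\partial/\partial y_j$) along $X$; since any solution forces $x=y=0$ whenever $\det A \neq 0$, it suffices to take any non-degenerate symmetric matrix $A$. The parameters $a_{ij}$ may then be specialized to integers by Remark \ref{rmk:magmause}.

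Next I would verify that the induced automorphism $\sigma|_X$ acts nontrivially on $\pic(X)$. By Proposition \ref{thm:lefschetz}, $\pic(X)\cong \pic(\pp^2\times\pp^2) = \zz[H_1]\oplus\zz[H_2]$, and $\sigma$ manifestly interchanges $H_1$ and $H_2$. This exhibits a nontrivial element of $\autp(X,\langle\sigma\rangle)$, and combined with the upper bound $\autp(X)\subseteq \wg_X \cong \zz/2\zz$ from Proposition \ref{imagealphainweylgroup} and Table \ref{weylfano}, we obtain $\autp(X,\langle \sigma\rangle) = \wg_X$, as required.

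The proof mirrors that of Lemma \ref{autP:2-6a)} but is in fact easier, because the defining equation is linear in each set of variables; no genuine obstacle arises. The only mildly delicate point is the smoothness check, which is handled cleanly by the elementary linear-algebraic observation above rather than by any Jacobian-ideal computation, so no appeal to a computer algebra system is strictly necessary for this family.
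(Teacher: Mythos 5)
Your proposal is correct and follows essentially the same route as the paper: the same swap involution $\sigma$ on $\pp^2\times\pp^2$, a $\sigma$-invariant bidegree $(1,1)$ divisor (the paper's equation $x_0y_0+x_1y_1+x_2y_2=0$ is just your case $A=\mathrm{Id}$), and the identification $\pic(X)\cong\zz[H_1]\oplus\zz[H_2]$ with $\sigma$ swapping the generators, combined with the Weyl-group upper bound. The only difference is that you verify smoothness (and hence irreducibility, since a smooth ample $(1,1)$ divisor is connected) by the elementary nondegeneracy argument $\det A\neq 0$ rather than by the computer check of Remark \ref{rmk:magmause}, which is a pleasant simplification but not a different method.
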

\begin{proof}
A member $X$ of the family \textnumero $2.32$ is a divisor of bidegree $(1,1)$ on $\mathbb{P}^2 \times \mathbb{P}^2$. Consider the involution of $\pp^2\times\pp^2$
\begin{equation}   \label{eqn:involutionforW}
\sigma: ([x_0:x_1:x_2],[y_0,y_1,y_2]) \mapsto ([y_0,y_1,y_2],[x_0,x_1,x_2]).
\end{equation}
Then $\sigma$ acts on $\pic(X)$ by swapping its two generators. Assume $X$ is given by
\begin{equation}  \label{eqn:W}
 x_0y_0 +x_1y_1 + x_2y_2 =0.
\end{equation}
Then $X$ is both smooth and irreducible, which can be verified using Remark \ref{rmk:magmause}. Finally, note that $X$ is also $\sigma$-invariant. This gives, $\autp(X,\langle \sigma \rangle)\cong \zz/2\zz$. 
\end{proof}

If $X$ is in family \textnumero 3.3 or \textnumero 3.17, it is a divisor on $\pp^1\times\pp^1\times\pp^2$. From Theorem \ref{thm:lefschetz}, we have $\pic(X)\cong \zz[H_1]\oplus\zz[H_2]\oplus\zz[H_3]$. Here $H_1$, $H_2$, and $H_3$ denote the pullbacks to $\pp^1 \times \pp^1 \times \pp^2$ 
of the hyperplane classes from the first and second factors of $\pp^1\times \pp^1$ and $\pp^2$, respectively. Let $([x_0:x_1],[y_0:y_1],[z_0:z_1,z_2])$ be coordinates on $\mathbb{P}^1\times \mathbb{P}^1\times \mathbb{P}^2$.

\begin{lemma}  \label{autP:3-3}
There exists an $X$ in \textnumero 3.3 such that $\autp(X)=\wg_X \cong \zz/2\zz$. 
\end{lemma}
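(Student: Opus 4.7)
The plan is to imitate the construction used in Lemmas \ref{autP:2-6a)} and \ref{autP:2-32}. A smooth member of family \textnumero 3.3 is a divisor on $\pp^1\times\pp^1\times\pp^2$ of appropriate tridegree (namely $(1,1,2)$ in the Mori--Mukai list). Since the Weyl group $\wg_X\cong\zz/2\zz$ must act on $\pic(X)=\zz[H_1]\oplus\zz[H_2]\oplus\zz[H_3]$ by the only nontrivial symmetry compatible with the intersection form and the contraction data, it must swap $[H_1]$ and $[H_2]$ while fixing $[H_3]$. I will realize this by the obvious involution of the ambient variety that swaps the two $\pp^1$-factors.

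Concretely, I would define the involution
\[
\sigma:([x_0:x_1],[y_0:y_1],[z_0:z_1:z_2])\mapsto ([y_0:y_1],[x_0:x_1],[z_0:z_1:z_2])
\]
of $\pp^1\times\pp^1\times\pp^2$. A general divisor of tridegree $(1,1,2)$ has the form
\[
F(x,y,z)=\sum_{i,j\in\{0,1\}} x_i\, y_j\, Q_{ij}(z_0,z_1,z_2),
\]
where each $Q_{ij}$ is a homogeneous quadratic form in $z_0,z_1,z_2$. The condition $\sigma^*F=F$ is simply $Q_{ij}=Q_{ji}$, which leaves a large linear system of $\sigma$-invariant divisors.

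Next I would choose generic coefficients in this invariant linear system and invoke Remark \ref{rmk:magmause} to verify, by a Magma or SageMath computation on a specialization to integer coefficients, that the resulting $X$ is smooth and irreducible. Since smoothness is an open condition on the parameters, a single smooth $\sigma$-invariant example suffices. By Lemma \ref{Lemma:autblowup} (or rather by construction, since $\sigma$ directly preserves $X$) the involution restricts to an automorphism of $X$, and by construction it swaps the classes $[H_1]$ and $[H_2]$ in $\pic(X)$. Combined with Proposition \ref{imagealphainweylgroup} and Table \ref{weylfano}, this yields $\autp(X,\langle\sigma\rangle)=\wg_X\cong\zz/2\zz$.

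The only potential obstacle is ensuring that the invariant subspace of $(1,1,2)$-forms actually contains a smooth divisor, i.e.\ that the Bertini-type genericity is not destroyed by the symmetry constraint $Q_{ij}=Q_{ji}$. This is easily handled by an explicit computer-algebra check on one specialization; the remaining steps are routine analogues of the arguments already given for \textnumero 2.6 (a) and \textnumero 2.32.
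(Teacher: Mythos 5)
Your proposal is correct and follows essentially the same route as the paper: the paper uses the same involution $\sigma$ swapping the two $\pp^1$-factors and exhibits an explicit $\sigma$-invariant tridegree $(1,1,2)$ equation whose smoothness and irreducibility are checked via Remark \ref{rmk:magmause}, then concludes $\autp(X,\langle\sigma\rangle)\cong\zz/2\zz$ with the Weyl group bound giving equality. The only difference is that you work with the general $\sigma$-invariant linear system ($Q_{ij}=Q_{ji}$) and a generic specialization, whereas the paper writes down one concrete member; both reduce to the same computational verification.
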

\begin{proof}
If $X$ is a member of family \textnumero 3.3, it is a smooth divisor of tridegree $(1,1,2)$ on $\mathbb{P}^1\times \mathbb{P}^1\times \mathbb{P}^2$. Consider the involution of $\pp^1\times\pp^1\times\pp^2$
\[
\sigma: ([x_0:x_1],[y_0:y_1],[z_0:z_1:z_2]) \mapsto ([y_0:y_1],[x_0:x_1],[z_0:z_1:z_2]),
\]
it induces an action on $\pic(X)$ that swaps $H_1$ and $H_2$. Assume that $X$ is given by the equation
\[
    x_0y_0(z_0z_1+z_2^2) + x_1y_1(z_1z_2+z_0^2) + (x_0y_1 + x_1y_0)z_1^2 =0,
\]
then it is $\sigma$-invariant. Using Remark \ref{rmk:magmause}, we find that $X$ is both smooth and irreducible. Thus, $\autp(X,\langle \sigma \rangle)\cong \zz/2\zz.$
\end{proof}

\begin{lemma}  \label{autP:3-17}
There exists an $X$ in \textnumero 3.17 such that $\autp(X)=\wg_X \cong \zz/2\zz$. 
\end{lemma}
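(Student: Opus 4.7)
The plan is to mirror the construction in Lemma \ref{autP:3-3}. By the Mori–Mukai description, a smooth member $X$ of \textnumero 3.17 is a divisor in $\pp^1\times\pp^1\times\pp^2$ of a tridegree of the form $(a,a,b)$, symmetric in the two $\pp^1$ factors, so that the generator of the expected Weyl group $\wg_X\cong\zz/2\zz$ should come from swapping those two factors. By Theorem \ref{thm:lefschetz}, $\pic(X)\cong\zz[H_1]\oplus\zz[H_2]\oplus\zz[H_3]$, and Proposition \ref{imagealphainweylgroup} already gives $\autp(X)\subseteq\wg_X\cong\zz/2\zz$, so it suffices to exhibit one smooth $X$ in the family that admits an automorphism acting nontrivially on its Picard group.

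First I would take the involution
\[
\sigma\colon ([x_0:x_1],[y_0:y_1],[z_0:z_1:z_2])\longmapsto ([y_0:y_1],[x_0:x_1],[z_0:z_1:z_2])
\]
of $\pp^1\times\pp^1\times\pp^2$, whose pullback exchanges $H_1$ and $H_2$ and fixes $H_3$. Next I would parametrize $\sigma$-invariant defining polynomials of the appropriate tridegree by decomposing the corresponding linear system into $\sigma$-eigenspaces and writing a general element of the $+1$-eigenspace as a linear combination of the symmetrizations $x_0^ix_1^{a-i}y_0^jy_1^{a-j}+x_0^jx_1^{a-j}y_0^iy_1^{a-i}$ with coefficients running over a basis of degree-$b$ polynomials in $z_0,z_1,z_2$. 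I would then specialize the coefficients to convenient integers and verify smoothness and irreducibility of the resulting $X$ using the Magma/SageMath test of Remark \ref{rmk:magmause}.

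Once such an $X$ is produced, $\sigma$ restricts to an automorphism of $X$ whose induced action on $\pic(X)$ has order $2$, so $\autp(X,\langle\sigma\rangle)\cong\zz/2\zz$, and the upper bound from Proposition \ref{imagealphainweylgroup} forces equality $\autp(X)=\wg_X$. The main obstacle here, routine but not automatic, is confirming that the $\sigma$-symmetric subsystem contains a smooth member: $\sigma$-invariance is a linear condition that cuts the full linear system roughly in half, so while a generic symmetric divisor is plausibly smooth, smoothness must be verified on an explicit specialization via the Jacobian criterion. If a given specialization turns out to be singular, one can either try another or compute the base locus of the symmetric subsystem and invoke Bertini to conclude existence.
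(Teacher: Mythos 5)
Your proposal is correct and follows essentially the same route as the paper: the paper also uses the involution swapping the two $\pp^1$ factors, exhibits a $\sigma$-invariant smooth irreducible divisor of tridegree $(1,1,1)$ (explicitly $x_1y_1z_0 - x_1y_0z_1 - x_0y_1z_1 + x_0y_0z_2=0$), verifies smoothness as in Remark \ref{rmk:magmause}, and concludes $\autp(X,\langle\sigma\rangle)\cong\zz/2\zz$, with Proposition \ref{imagealphainweylgroup} giving the matching upper bound. The only difference is that you leave the $\sigma$-invariant equation as a generic symmetrized specialization to be checked, while the paper writes one down explicitly.
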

\begin{proof}
If $X$ is a member of the family \textnumero $3.17$, it can be described as a divisor of tridegree $(1,1,1)$ on $\mathbb{P}^1 \times \mathbb{P}^1 \times \mathbb{P}^2$. Consider the involution
\[
\sigma: ([x_0:x_1],[y_0:y_1],[z_0:z_1:z_2]) \mapsto ([y_0:y_1],[x_0:x_1],[z_0:z_1:z_2])
\]
on $\pp^1\times\pp^1\times\pp^2$. Observe that $\sigma$ swaps the generators $H_1$ and $H_2$ of $\pic(X)$. 
Now suppose that $X$ is given by the equation
\[
x_1y_1z_0 - x_1y_0z_1 - x_0y_1z_1 + x_0y_0z_2=0.
\]
Then $X$ is $\sigma$-invariant and it can be verified to be smooth and irreducible (see Remark \ref{rmk:magmause}). Thus, $\autp(X,\langle \sigma\rangle)=\zz/2\zz$. 
\end{proof}

A member of family \textnumero $4.1$ is described as a divisor on  $\pp^1\times\pp^1\times\pp^1\times\pp^1$ of multidegree $(1,1,1,1)$. From Theorem \ref{thm:lefschetz}, $\pic(X) \cong \zz[H_1]\oplus \zz[H_2]\oplus \zz[H_3]\oplus \zz[H_4]$, where $H_i$ is the pullback to $(\pp^1)^4$ of a hyperplane on $i$th copy of $\pp^1$ for $1 \leq i \leq 4$. 
\begin{lemma}  \label{autP:4-1}
There exists an $X$ in \textnumero 4.1 such that $\autp(X)=\wg_X \cong S_4$. 
\end{lemma}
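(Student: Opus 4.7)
The plan is to exhibit a smooth $S_4$-invariant divisor of multidegree $(1,1,1,1)$ in $(\pp^1)^4$ and verify that it realizes the full Weyl group action on the Picard group.

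First, the natural action of $S_4$ on $(\pp^1)^4$ that permutes the four factors pulls back each hyperplane class $H_i$ to $H_{\sigma^{-1}(i)}$, so the induced action on $\pic((\pp^1)^4)=\bigoplus_{i=1}^4\zz[H_i]$ is the standard permutation representation, which is faithful. By Proposition \ref{thm:lefschetz}, this restricts to the same faithful permutation action on $\pic(X)$ for any smooth divisor $X\subset(\pp^1)^4$ in family \textnumero 4.1. Thus any smooth $S_4$-invariant $X$ immediately satisfies $\autp(X,S_4)\cong S_4$, and this matches the upper bound $\wg_X\cong S_4$ from Proposition \ref{imagealphainweylgroup}.

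Next, I would construct an explicit $S_4$-invariant polynomial. Using coordinates $([x_0^{(i)}:x_1^{(i)}])$ on the $i$-th factor, the $S_4$-invariant subspace of the space of multidegree-$(1,1,1,1)$ polynomials is five-dimensional, spanned by the orbit sums
\[
e_k := \sum_{\substack{\epsilon\in\{0,1\}^4\\ |\epsilon|=k}} \prod_{i=1}^4 x_{\epsilon_i}^{(i)}, \qquad k=0,1,2,3,4.
\]
One then considers divisors of the form $X = V(c_0 e_0 + c_1 e_1 + c_2 e_2 + c_3 e_3 + c_4 e_4)$, and chooses integer coefficients $c_k$ so that $X$ is smooth and irreducible; following Remark \ref{rmk:magmause}, this verification is carried out in Magma on a concrete choice. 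Since smoothness is open in the five-dimensional parameter space of invariant polynomials, exhibiting a single numerical example is enough. Combined with the preceding paragraph, this produces a smooth $X$ in family \textnumero 4.1 with $\autp(X,S_4)=\wg_X\cong S_4$.

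The main obstacle is confirming that at least one $S_4$-invariant member of the family is actually smooth -- that the heavy symmetry does not force every invariant $(1,1,1,1)$-divisor to be singular along the fixed locus of some subgroup of $S_4$ acting on $(\pp^1)^4$ (the diagonal, or various partial diagonals). This is a finite set of closed conditions and is ruled out by exhibiting a single smooth invariant example via Remark \ref{rmk:magmause}; once such an example is produced, the lemma follows at once.
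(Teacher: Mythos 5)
Your proposal is correct and follows essentially the same route as the paper: exhibit an $S_4$-invariant multidegree $(1,1,1,1)$ divisor, note the permutation action on $\pic(X)$ via Proposition \ref{thm:lefschetz}, and verify smoothness and irreducibility of one explicit member with Remark \ref{rmk:magmause}. In fact the paper's explicit equation is exactly your orbit-sum combination $e_0+e_1-e_2+e_3+e_4$, so your parametrization of the invariant subspace recovers the published example.
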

\begin{proof} 
 Consider automorphisms of $(\pp^1)^4$
\begin{align*}
\sigma: ([x_0:x_1], [y_0:y_1], [z_0:z_1],[w_0:w_1]) \mapsto ([y_0:y_1], [z_0:z_1], [w_0:w_1],[x_0:x_1])\\
\tau: ([x_0:x_1], [y_0:y_1], [z_0:z_1],[w_0:w_1]) \mapsto ([y_0:y_1], [x_0:x_1], [z_0:z_1],[w_0:w_1]),
\end{align*}
where $([x_0:x_1], [y_0:y_1], [z_0:z_1],[w_0,w_1])$ are projective coordinates on $(\pp^1)^4$.
Then we see that $\langle \sigma,\tau\rangle \cong S_4$ acts on $\pic(X)$ by permutations.
Assume that $X$ is given by the equation
\begin{multline*}
 x_0y_0z_0w_0 +  (x_0y_0z_0w_1 + x_1y_0z_0w_0 + x_0y_1z_0w_0 + x_0y_0z_1w_0)+ \\
 -(x_0y_0z_1w_1 + x_1y_0z_0w_1 + x_1y_1z_0w_0 + x_0y_1z_1w_0+ x_0y_1z_0w_1 + x_1y_0z_1w_0) \\+
 (x_0y_1z_1w_1 + x_1y_0z_1w_1  + x_1y_1z_0w_1 + x_1y_1z_1w_0)+ x_1y_1z_1w_1  =0,
\end{multline*}
Using Remark \ref{rmk:magmause}, we can verify that $X$ is both smooth and irreducible. Moreover, observe that both $\sigma, \tau$ preserve $X$, so we can assume that $\sigma, \tau \in \aut(X)$. Thus, $\autp(X,\langle \sigma,\tau\rangle )=S_4$. 

The same equation gives an $X$ for which $\autp(X,G)$ is isomorphic to other subgroups of $S_4$ by choosing $G$ to be an appropriate subgroup of $\langle \sigma,\tau\rangle$. 
\end{proof}

\subsection{Double covers}  \label{im(alpha)fordoublecovers}
We discuss the following families of Fano threefolds whose smooth member is a double cover
\begin{center}
\textnumero 2.6 (b),\; \textnumero 3.1.
\end{center}
For both families, we use their Mori-Mukai description in \cite[Tables 2, 3]{MM81classtable}. The case for family \textnumero 2.6 has already been settled in Lemma \ref{autP:2-6a)}. Nevertheless, we record that the same holds for case~2.6(b), as it can be seen just as easily.

\begin{lemma} \label{autP:2-6b)}
There exists an $X$ in \textnumero 2.6 (b) such that $\autp(X)=\wg_X \cong \zz/2\zz$. 
\end{lemma}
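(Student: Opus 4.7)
The plan is to lift the involution of Lemma \ref{autP:2-32} through a double cover. By the Mori--Mukai description (\cite[Table 2]{MM81classtable}), a smooth member of \textnumero 2.6(b) can be realized as a double cover $\pi \colon X \to W$ of a smooth threefold $W$ in family \textnumero 2.32 branched along a smooth divisor $B \in |-K_W|$. By adjunction applied to the bidegree $(1,1)$ hypersurface $W \subset \pp^2 \times \pp^2$, we have $-K_W = 2(H_1+H_2)|_W$, so $B$ is cut out on $W$ by a bihomogeneous form of bidegree $(2,2)$.

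I would first take $W$ to be the explicit variety $\{x_0 y_0 + x_1 y_1 + x_2 y_2 = 0\}$ of Lemma \ref{autP:2-32}, which is preserved by the swap involution $\sigma$ from \eqref{eqn:involutionforW} and on which $\sigma^*$ exchanges the generators $H_1, H_2$ of $\pic(W)$. Next, I would exhibit a $\sigma$-invariant bihomogeneous form $F(x,y)$ of bidegree $(2,2)$ whose zero locus meets $W$ in a smooth divisor $B$. The space of $\sigma$-symmetric $(2,2)$-forms is a nonempty linear subspace of the full $(2,2)$-forms, and since smoothness of $B$ is an open condition on this subspace, a suitable $F$ can be chosen and verified explicitly by the procedure in Remark \ref{rmk:magmause}.

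The double cover $\pi \colon X \to W$ branched along such a $B$ is then a smooth member of family \textnumero 2.6(b). Since both $W$ and $B$ are $\sigma$-invariant, $\sigma$ lifts to an involution $\tilde\sigma \in \aut(X)$ satisfying $\pi \circ \tilde\sigma = \sigma \circ \pi$, which induces the identity $\tilde\sigma^* \circ \pi^* = \pi^* \circ \sigma^*$ on Picard groups. Since $\pi$ is finite and surjective, $\pi^*\colon \pic(W) \to \pic(X)$ is injective, so $\tilde\sigma^*(\pi^* H_1) = \pi^* H_2 \neq \pi^* H_1$. Thus $\tilde\sigma^*$ acts nontrivially on $\pic(X)$, giving $\autp(X, \langle \tilde\sigma \rangle) \cong \zz/2\zz$, which equals $\wg_X$ by Proposition \ref{imagealphainweylgroup}.

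The one step that requires genuine verification is producing a smooth $\sigma$-invariant branch divisor in $|-K_W|$; once such a $B$ is chosen, the lifting of $\sigma$ and the resulting action on $\pic(X)$ are essentially formal. I expect this to reduce to a routine Magma computation as in Remark \ref{rmk:magmause}, using a symmetric combination such as $x_0^2 y_0^2 + x_1^2 y_1^2 + x_2^2 y_2^2$ plus generic lower-weight $\sigma$-invariant terms to clear the singular locus.
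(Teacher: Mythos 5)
Your proposal is correct and follows essentially the same route as the paper: take the explicit $W=\{x_0y_0+x_1y_1+x_2y_2=0\}$ from Lemma \ref{autP:2-32}, choose a $\sigma$-symmetric $(2,2)$-form cutting out a smooth branch divisor $B\in|-K_W|$ (verified as in Remark \ref{rmk:magmause}), and lift $\sigma$ to an involution of the double cover that swaps the two Picard generators. The only cosmetic difference is that the paper invokes \cite[Theorem 3.8]{MM86} to identify $\pic(X)\cong\zz[\pi^*H_1]\oplus\zz[\pi^*H_2]$, whereas you argue via injectivity of $\pi^*$, which suffices for the same conclusion.
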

\begin{proof} In Mori-Mukai notation, let us write $W$ for a smooth Fano threefold in \textnumero 2.32. Then if $X$ is in \textnumero 2.6 (b), it can be described as a double cover of $W$ with branch locus $B$ given by a smooth member in $|-K_W|$. Let $\pi : X \to W$ be the double covering. 

Recall the involution $\sigma$ from \eqref{eqn:involutionforW} that swaps the two generators of $\pic(W)$. 
Suppose that $B$ is given by the intersection of hypersurfaces
\[
B:= \begin{cases}
    \sum_{i,j,k,l=0}^2 a_{ijkl}\; x_i x_j y_k y_l =0 \\
    x_0y_0 + x_1y_1 + x_2y_2 =0,
\end{cases} 
\] 
where the first equation defines a divisor $-K_{\pp^2\times\pp^2} = (2,2)$ and the second equation defines $W$ (as in \eqref{eqn:W}), introduced in Lemma~\ref{autP:2-32} as a smooth member of family \textnumero 2.32.
If we further assume that coefficients satisfy $a_{ijkl} = a_{klij}$, then $B$ is $\sigma$-invariant. The smoothness of $B$ can be checked using Remark \ref{rmk:magmause}. Thus, $\sigma \in \aut(W;B)$. Let $\tilde{\sigma}$ be a preimage of $\sigma$ under the map $\aut(X) \twoheadrightarrow \aut(W;B)$. From \cite[Theorem 3.8]{MM86}, we know $\pic(X) \cong \zz[\pi^*H_1]\oplus \zz[\pi^*H_2]$ then $\tilde{\sigma}$ acts on $\pic(X)$ by swapping its generators.
Thus, we have $\autp(X,\langle \tilde{\sigma}\rangle) = \mathbb{Z}/2\mathbb{Z} $.   
\end{proof} 

\begin{lemma}  \label{autP:3-1}
There exists an $X$ in \textnumero 3.1 such that $\autp(X)=\wg_X \cong S_3$. 
\end{lemma}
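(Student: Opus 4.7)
The plan is to realize a smooth $X$ in \textnumero 3.1 as a double cover $\pi : X \to \pp^1 \times \pp^1 \times \pp^1$ branched along a smooth divisor $B$ of tridegree $(2,2,2)$, which is the Mori--Mukai description given in \cite[Table 3]{MM81classtable}. By \cite[Theorem 3.8]{MM86}, exactly as in the proof of Lemma \ref{autP:2-6b)}, the Picard group of $X$ is generated by the pullbacks: $\pic(X) \cong \zz[\pi^*H_1] \oplus \zz[\pi^*H_2] \oplus \zz[\pi^*H_3]$, where $H_i$ denotes the pullback to $(\pp^1)^3$ of the hyperplane class from the $i$th factor. The natural $S_3$-action on $(\pp^1)^3$ by permutation of factors acts on $\pic((\pp^1)^3)$ by permuting the three classes $H_i$, which realizes the full Weyl group $\wg_X \cong S_3$ as automorphisms of $\pic(X)$.

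The first step is to exhibit an explicit smooth $S_3$-invariant branch divisor $B \in |\mathcal{O}(2,2,2)|$. With coordinates $([x_0:x_1], [y_0:y_1], [z_0:z_1])$ on $(\pp^1)^3$, the $S_3$-invariant subspace of $H^0((\pp^1)^3, \mathcal{O}(2,2,2))$ is spanned by the orbit sums of tridegree $(2,2,2)$ monomials under the diagonal permutation of the three coordinate factors. I would take a generic linear combination of these orbit sums with integer parameters and invoke Remark \ref{rmk:magmause} to verify in Magma or SageMath that some specialization produces a smooth, irreducible $B$. Since smoothness is an open condition and the $S_3$-invariant linear system is of sufficiently high dimension, such a choice is expected to exist.

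Given such a $B$, every $\sigma \in S_3 \subset \aut((\pp^1)^3)$ preserves $B$, so $S_3 \subseteq \aut((\pp^1)^3; B)$. As in Lemma \ref{autP:2-6b)}, the double-cover construction furnishes a surjection $\aut(X) \twoheadrightarrow \aut((\pp^1)^3; B)$, through which $S_3$ lifts to a subgroup $\widetilde{S_3} \subseteq \aut(X)$. The lifted elements act on $\pic(X)$ precisely by permuting the generators $\pi^*H_i$. Together with Proposition \ref{imagealphainweylgroup} and Table \ref{weylfano}, this forces $\autp(X, \widetilde{S_3}) = \wg_X \cong S_3$.

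The main obstacle is verifying smoothness of the chosen $S_3$-invariant $B$: the branch locus must avoid the loci where the natural $S_3$-symmetry would force singular behavior (e.g., along the small diagonal). This is a finite computer algebra check entirely analogous to the ones used in Lemmas \ref{autP:2-6a)} and \ref{autP:2-6b)}, and the generic-smoothness argument via Remark \ref{rmk:magmause} reduces the problem to exhibiting one explicit integer specialization.
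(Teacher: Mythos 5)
Your proposal is correct and follows essentially the same route as the paper: a double cover of $(\pp^1)^3$ branched along an $S_3$-invariant smooth $(2,2,2)$-divisor (the paper imposes the symmetry via coefficient conditions $a_{ijklmn}=a_{klijmn}=a_{klmnij}$, which is equivalent to your orbit-sum description), the identification $\pic(X)\cong \zz[\pi^*H_1]\oplus\zz[\pi^*H_2]\oplus\zz[\pi^*H_3]$ from \cite[Theorem 3.8]{MM86}, a lift of $S_3$ through $\aut(X)\twoheadrightarrow\aut((\pp^1)^3;B)$, and a smoothness check via Remark \ref{rmk:magmause}. The only difference is that the paper asserts a verified smooth invariant member rather than leaving it as an expected generic specialization, so you should pin down one explicit choice of coefficients when writing this up.
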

\begin{proof}
A smooth Fano threefold $X$ in \textnumero 3.1 is a double cover of $\mathbb{P}^1 \times \mathbb{P}^1 \times \mathbb{P}^1$ with branch locus $B$ a divisor of tridegree $(2,2,2)$. Let $\pi : X \to (\pp^1)^3$ denote the double covering. 

Consider $\sigma, \tau \in \aut((\pp^1)^3)$ defined as
\begin{align*}
\sigma &: ([x_0:x_1],[y_0,y_1],[z_0:z_1]) \mapsto ([y_0:y_1],[x_0:x_1],[z_0:z_1])\\
\tau &: ([x_0:x_1],[y_0,y_1],[z_0:z_1]) \mapsto ([y_0:y_1],[z_0:z_1],[x_0:x_1]).
\end{align*}
Let $\mathcal{G}:=\langle \sigma,\tau  \rangle$. Then $\mathcal{G}\cong S_3$ acts on $\pic(X)\cong \zz^3$ by permutations. Suppose that $B$ is given by
\[
\sum a_{ijklmn} \; x_ix_jy_ky_lz_mz_n =0,
\]
where the coefficients satisfy $a_{ijklmn} = a_{klijmn}$ and $a_{ijklmn} = a_{klmnij}$ for each $i,j,k,l,$ $m, n=0,1$ so that $B$ is $\mathcal{G}$-invariant. 
Now using Remark \ref{rmk:magmause}, we find that the hypersurface defining $B$ is also smooth.

Let $\widetilde{\mathcal{G}}$ be a lift of $\mathcal{G}$ to $\aut(X)$ via $\aut(X) \twoheadrightarrow \aut((\pp^1)^3;B)$. From \cite[Theorem 3.8]{MM86}, we have $\pic(X)\cong \zz[\pi^*H_1]\oplus \zz[\pi^*H_2]\oplus \zz[\pi^*H_3]$. Then $\tilde{\mathcal{G}}$ acts on $\pic(X)$ via permutations as well and thus,
we get $\autp(X,\widetilde{\mathcal{G}}) = S_3$.  
\end{proof}

\subsection{Blow-ups of \texorpdfstring{$\mathbb{P}^3$}{P3}} \label{im(alpha)forblowupofP3}
We consider families \textnumero $2.12$ and \textnumero $3.25$. A smooth member $X$ of these two families can be described as a blow-up of $\mathbb{P}^3$ in a curve. 

\begin{lemma}  \label{autP:2-12}
There exists an $X$ in \textnumero 2.12 such that $\autp(X)=\wg_X \cong \zz/2\zz$. 
\end{lemma}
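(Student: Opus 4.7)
The plan is to realize the nontrivial element of $\wg_X \cong \zz/2\zz$ as a biregular involution $\sigma \in \aut(X)$ that does \emph{not} descend from an automorphism of $\mathbb{P}^3$ preserving the blown-up center, following the same pattern used for \textnumero 2.21 mentioned after Lemma \ref{Lemma:autblowup}.

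By the Mori-Mukai description \cite{MM81classtable}, a smooth $X$ in \textnumero 2.12 is the blow-up $\phi : X \to \mathbb{P}^3$ along a smooth irreducible curve $C$ of degree $6$ and genus $3$, so $\pic(X) = \zz H \oplus \zz E$ with $H = \phi^*\mathcal{O}_{\mathbb{P}^3}(1)$ and $E$ the exceptional divisor. Since $\wg_X \neq 0$, the KKMR decomposition of $\overline{Mov}$ has two chambers exchanged by the Weyl involution; the second corresponds to a second extremal contraction $\phi' : X \to \mathbb{P}^3$, again a blow-up along a smooth curve of degree $6$ and genus $3$, with hyperplane class $H' = aH - bE$ and exceptional divisor $E' = cH - dE$. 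The numerical values of $a,b,c,d$ are pinned down by the constraints $(H')^3 = 1$, $H' \cdot E = 0$ along the ruling of $E$, and $-K_X = 4H - E = 4H' - E'$. The Weyl involution acts by $H \leftrightarrow H'$, $E \leftrightarrow E'$.

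Any automorphism of $X$ lifted from $\aut(\mathbb{P}^3; C)$ preserves $E$, hence acts trivially on $\pic(X)$ by Lemma \ref{Lemma:autblowup}. So $\sigma$ must realize the birational self-map $\phi' \circ \phi^{-1}: \mathbb{P}^3 \dashrightarrow \mathbb{P}^3$ biregularly on the resolved threefold $X$. To achieve this, I would place $C$ on a symmetric auxiliary surface: a natural choice is to take $C$ as the strict transform of a curve lying on a smooth quadric $Q \subset \mathbb{P}^3$, where the two rulings of $Q \cong \mathbb{P}^1 \times \mathbb{P}^1$ provide the geometric source for swapping the two contractions. After choosing a defining ideal for $C$ invariant under the geometric involution producing $\phi'$, I would verify smoothness and irreducibility using Remark \ref{rmk:magmause}, and confirm that the induced $\sigma : X \to X$ is a biregular involution by checking that the linear system $|H'|$ on $X$ is base-point free and defines a morphism to $\mathbb{P}^3$ contracting a prime divisor in the expected class $E'$.

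The main obstacle is the construction of $\sigma$ itself: unlike Lemmas \ref{autP:2-6a)}--\ref{autP:4-1}, where the involution arises as an obvious coordinate permutation of an ambient product of projective spaces, here the involution is born from the biregular geometry of $X$ and not from $\mathbb{P}^3$. One must verify that the specific $C$ chosen yields a genuine involution on $X$ and not merely on a dense open subset, and this forces $C$ into a special stratum of the moduli of degree-$6$ genus-$3$ curves in $\mathbb{P}^3$. Once $\sigma$ is constructed and shown to act by $H \leftrightarrow H'$, the conclusion $\autp(X,\langle\sigma\rangle) = \zz/2\zz$ follows immediately since $H$ and $H'$ are linearly independent in $\pic(X)$.
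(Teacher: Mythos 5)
Your overall strategy is sound and is, in spirit, what the paper does: realize the nontrivial Weyl element by an automorphism of $X$ swapping the two $E1$-contractions $\phi,\phi':X\to\pp^3$ (and indeed $H'=3H-E$, $E'=8H-3E$, consistent with your numerology). However, the one step that actually matters --- producing the involution on a genuine member of \textnumero 2.12 --- is missing, and your concrete suggestion for it fails. A smooth curve of degree $6$ and genus $3$ lying on a smooth quadric $Q\subset\pp^3$ must have bidegree $(2,4)$ on $Q\cong\pp^1\times\pp^1$ (the only solution of $a+b=6$, $(a-1)(b-1)=3$). For such a curve every cubic through $C$ contains $Q$ (from $0\to I_Q(3)\to I_C(3)\to \mathcal{O}_Q(1,-1)\to 0$ one gets $h^0(I_C(3))=4$, all of the form $Q\cdot H$), so $C$ is not an intersection of cubics, violating the Mori--Mukai condition defining \textnumero 2.12; worse, the strict transform $\tilde\ell$ of a ruling line of type $(1,0)$ meets $E$ in four points, so $-K_X\cdot\tilde\ell=4-4=0$ and the blow-up is not Fano at all. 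So the ``two rulings of $Q$'' cannot be the source of the involution, and your plan never exhibits a $\sigma$, nor a mechanism by which $|H'|$ becomes symmetric to $|H|$.

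The paper sidesteps this by abandoning the blow-up presentation for the construction step: following \cite[\S 5.4]{calabi}, it takes $X$ to be an explicit complete intersection of three divisors of bidegree $(1,1)$ in $\pp^3\times\pp^3$ whose equations are (up to sign conventions) symmetric under the swap $\sigma$ of the two factors; the two projections to $\pp^3$ are exactly the two contractions $\phi,\phi'$, smoothness is checked as in Remark \ref{rmk:magmause}, and Proposition \ref{thm:lefschetz} gives $\pic(X)\cong\zz[H_1]\oplus\zz[H_2]$ with $\sigma$ visibly exchanging the generators, whence $\autp(X,\langle\sigma\rangle)=\zz/2\zz$. If you want to salvage your approach, you should replace the quadric idea by such a symmetric model (or otherwise exhibit $C$ together with a Cremona-type involution of $\pp^3$ defined by the cubics through $C$ that lifts biregularly), and only then does your final paragraph's conclusion follow.
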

\begin{proof}
Mori-Mukai describe a member $X$ of family \textnumero 2.12 as the blow up of $\pp^3$ in a curve of degree $6$ and genus $3$ which is an intersection of cubics. We use an alternate description of $X$. In \cite[\S 5.4]{calabi}, the authors describe $X$ as an intersection of three divisors of bidegree $(1,1)$ in $\pp^3\times\pp^3$:
\[
x_0y_1+x_1y_0-\sqrt{2}x_2y_2 \;=\; x_0y_2+x_2y_0-\sqrt{2}x_3y_3 \;=\; x_0y_3+x_3y_0-\sqrt{2}x_1y_1 \; =\;0
\]
where $[x_0:x_1:x_2:x_3]$ and $[y_0:y_1:y_2:y_3]$ are coordinates on two copies of $\mathbb{P}^3\times \mathbb{P}^3$. This intersection is invariant under the involution 
\[
\sigma : ([x_0:x_1:x_2:x_3],[y_0:y_1:y_2:y_3]) \mapsto ([y_0:y_1:y_2:y_3],[x_0:x_1:x_2:x_3])
\]
of $\mathbb{P}^3\times \mathbb{P}^3$, so we can assume $\sigma \in \aut(X)$.
Observe that $\sigma$ swaps the two generators of $\pic(X)\cong \zz^2$ (Theorem \ref{thm:lefschetz}). Thus, we have $\autp(X,\langle \sigma \rangle)=\zz/2\zz$.  
\end{proof} 

\begin{lemma}  \label{autP:3-25}
There exists an $X$ in \textnumero 3.25 such that $\autp(X)=\wg_X \cong \zz/2\zz$. 
\end{lemma}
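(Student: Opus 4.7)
My plan is to exhibit $X$ as the blow-up of $\pp^3$ along two disjoint lines (the Mori--Mukai description of family \textnumero 3.25) and to produce an involution of $\pp^3$ that swaps these two lines. Lifting this involution via Lemma \ref{Lemma:autblowup} and tracking its action on the Picard group will give the required $\zz/2\zz$-action, matching the upper bound from Proposition \ref{imagealphainweylgroup}.

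Concretely, I would fix coordinates $[x_0:x_1:x_2:x_3]$ on $\pp^3$ and take the two disjoint lines to be $L_1 = \{x_0=x_1=0\}$ and $L_2 = \{x_2=x_3=0\}$. The involution
\[
\sigma : [x_0:x_1:x_2:x_3] \mapsto [x_2:x_3:x_0:x_1]
\]
of $\pp^3$ interchanges $L_1$ and $L_2$, so $\sigma \in \aut(\pp^3; L_1 \sqcup L_2)$. By Lemma \ref{Lemma:autblowup}, $\sigma$ lifts to an automorphism $\tilde{\sigma} \in \aut(X)$.

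To compute the induced action, I would use $\pic(X) = \zz[H] \oplus \zz[E_1] \oplus \zz[E_2]$, where $H$ is the pullback of the hyperplane class on $\pp^3$ and $E_i$ is the exceptional divisor over $L_i$. Since $\sigma$ fixes the hyperplane class and exchanges the blow-up centers, the lift $\tilde{\sigma}$ fixes $H$ and swaps $E_1$ and $E_2$. This is a nontrivial action, so $\autp(X,\langle \tilde{\sigma}\rangle)$ contains $\zz/2\zz$; combined with the bound $\autp(X) \subseteq \wg_X = \zz/2\zz$ from Proposition \ref{imagealphainweylgroup} and Table \ref{weylfano}, we obtain equality.

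I do not expect a serious obstacle in this case. Unlike the hypersurface-in-products-of-projective-spaces lemmas earlier in this section, there is no defining equation whose symmetry must be arranged: a disjoint union of two skew lines in $\pp^3$ is automatically smooth as a blow-up center, and the linear involution above visibly swaps them. The only minor check is that the lift $\tilde{\sigma}$ acts on $\pic(X)$ as described, which is immediate from the fact that $\sigma$ permutes the centers $L_1$ and $L_2$.
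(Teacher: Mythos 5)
Your proposal is correct and follows essentially the same route as the paper: choose two disjoint lines in $\pp^3$ swapped by a linear involution, lift it via Lemma \ref{Lemma:autblowup}, and note that the lift exchanges the exceptional divisors while fixing $H$, giving $\autp(X,\langle\tilde{\sigma}\rangle)\cong\zz/2\zz$, which matches the Weyl group bound. The only differences are the specific coordinates for the lines and the involution, which are immaterial.
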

\begin{proof} 
From the Mori-Mukai description, if $X$ is a member of family \textnumero 3.25, it can be described as the blow-up of $\mathbb{P}^3_{x_0, x_1, x_2, x_3}$ in the disjoint union of two lines, say $\ell$ and $m$ (\cite[Table 3]{MM81classtable}). Let
\begin{align*}
    \ell := \{ x_0 = x_1 = x_2 =0 \},  \hspace{1.0cm} m := \{ x_1 = x_2 = x_3 =0\}.
\end{align*}
be two lines in $\pp^3$ and $ \sigma :[x_0:x_1:x_2:x_3] \mapsto [x_3:x_2:x_1:x_0]$ be an involution. Then $\sigma$ keeps $\ell \cup m$ invariant but interchanges the two lines on $\pp^3$. Note that $\pic(X)=\zz[H]\oplus \zz[E_\ell]\oplus\zz[E_m]$, where $H$ is the pullback of a general hyperplane in $\pp^3$ and $E_\ell, E_m$ stand for exceptional divisors of the blow-up corresponding to lines $\ell$ and $m$, respectively. Therefore, by Lemma \ref{Lemma:autblowup}, $\sigma$ lifts to an involution $\tilde{\sigma}$ of $X$ that swaps $E_{\ell}$ and $E_m$. So, $\autp(X,\langle \tilde{\sigma}\rangle) = \mathbb{Z}/2\mathbb{Z}$. 
\end{proof} 
We remark that the Fano threefold $X$ in \textnumero 3.25 is toric and as such, $\autp(X)$ can also be found using the description of its fan. See $\S \ref{subsec:toric}$ for examples that illustrate this.

\subsection{Blow-ups of \texorpdfstring{$Q$}{Q}}   \label{im(alpha)forblQ}
Let $Q \subset \mathbb{P}^4$ be a smooth quadric threefold with coordinates $[x_0:x_1:x_2:x_3:x_4]$.
 The following four families 
\begin{center}
\textnumero 2.21, \; \textnumero 3.10, \; \textnumero 3.19, \; \textnumero 3.20
\end{center}
have a smooth member $X$ that is a blow-up of $Q$ in a curve $C$. 

\begin{lemma}  \label{autP:2-21}
There exists an $X$ in \textnumero 2.21 such that $\autp(X)=\wg_X \cong \zz/2\zz$. 
\end{lemma}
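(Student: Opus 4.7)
The plan is to realize the nontrivial element of $\wg_X \cong \zz/2\zz$ by an involution of $X$ that, crucially, does \emph{not} descend to an automorphism of $Q$ preserving the blown-up curve. By Proposition \ref{imagealphainweylgroup} we have $\autp(X,\langle \sigma\rangle) \subseteq \wg_X \cong \zz/2\zz$, so it suffices to exhibit a single smooth member of the family together with an automorphism that acts nontrivially on $\pic(X)$.

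First I would analyze the extremal structure. Family \textnumero 2.21 has two extremal rays in $\neo(X)$, and because $\wg_X \cong \zz/2\zz$ acts faithfully on the Mori cone, those rays must give contractions of the \emph{same} type. Concretely this means $X$ admits two distinct maps $\pi_1, \pi_2 : X \to Q$, each realizing it as a blow-up of a smooth quadric threefold along a smooth rational quartic, with respective exceptional divisors $E_1, E_2$. The Weyl involution acts on $\pic(X)$ by swapping $[E_1]$ and $[E_2]$ and fixing the pullback hyperplane class.

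The main step is to realize $X$ inside an ambient variety that carries a manifest swap symmetry interchanging these two structures. The natural choice is to use the joint morphism $(\pi_1, \pi_2) : X \hookrightarrow Q \times Q$, viewed inside $\pp^4 \times \pp^4$ via the two quadric equations $q_1(x) = q_2(y) = 0$. The coordinate-swap involution
\[
\sigma_0 : ([x_0{:}\cdots{:}x_4],[y_0{:}\cdots{:}y_4]) \mapsto ([y_0{:}\cdots{:}y_4],[x_0{:}\cdots{:}x_4])
\]
of $\pp^4 \times \pp^4$ preserves $Q \times Q$ when $q_1 = q_2$. I would then identify the linear system in $Q \times Q$ cutting out members of family \textnumero 2.21 and choose a defining bihomogeneous equation $F(x,y)$ with coefficients satisfying a symmetry $a_{IJ} = a_{JI}$ so that $\sigma_0^* F = F$. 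A sufficiently general such symmetric $F$ gives a smooth irreducible $X$, as one verifies via the computer algebra check of Remark \ref{rmk:magmause}. The induced involution $\sigma := \sigma_0|_X$ then interchanges $\pi_1$ and $\pi_2$ and hence swaps $[E_1]$ and $[E_2]$, so $\autp(X,\langle\sigma\rangle) = \zz/2\zz = \wg_X$.

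The main obstacle will be pinning down the correct symmetric ambient embedding and writing a concrete symmetric equation whose zero locus is smooth: unlike the cases in Lemmas \ref{autP:2-12} and \ref{autP:3-25}, the swap does \emph{not} come from Lemma \ref{Lemma:autblowup} applied to either blow-up structure $\pi_i$ individually, since no automorphism of a single $Q$ preserves its rational quartic and also realizes the exchange. It is precisely for this reason that we must pass to the second birational model $\pi_2$ and view both contractions simultaneously.
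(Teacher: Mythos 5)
Your guiding idea---realizing the Weyl involution by an automorphism of $X$ that does not come from $\aut(Q;C)$, by viewing the two divisorial contractions simultaneously---is in the right spirit (the paper's involution is likewise not in $\aut(Q,C)$), but the execution has genuine gaps. First, your structural deduction is unjustified: nontriviality of Matsuki's Weyl group does not imply that the two extremal rays of $\neo(X)$ carry contractions of the same type. Family \textnumero~2.2 has $\wg_X=\zz/2\zz$ while its two rays are of types $C_1$ and $D_1$, which forces $\autp(X)=0$ (Remark \ref{autp:2.2exception}); the Weyl group measures symmetry of the whole KKMR chamber structure, not of $\neo(X)$ alone. For \textnumero~2.21 you need the actual description of the second ray, namely that $|2\pi^*H-E|$ (quadrics through the twisted quartic) gives a second blow-down to a quadric; this is precisely the input the paper imports from \cite[\S 5.9]{calabi}.

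Second, the proposed construction inside $Q\times Q$ is not well-posed: $X$ has codimension $3$ in $Q\times Q\subset\pp^4\times\pp^4$, so it is not cut out by a single bihomogeneous equation $F$, and a ``sufficiently general symmetric $F$'' would define a fivefold, not a member of family \textnumero~2.21. What is actually required is the closure of the graph of a birational involution $\tau:Q\dashrightarrow Q$ given by quadrics through a twisted quartic, together with verification that this graph is the smooth blow-up and that $\tau$ lifts to a biregular involution $\sigma\in\aut(X)$; that existence statement is the crux, and the paper obtains it by citing the explicit $Q$, $C$, $\tau$, $\sigma$ of \cite[Remark 5.52]{calabi}, with $\sigma$ acting on $\pic(X)$ by $\pi^*H\mapsto 2\pi^*H-E$ and $E\mapsto 3\pi^*H-2E$. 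Relatedly, your description of the Picard action is incorrect: the involution does not fix the pullback hyperplane class (it sends $\pi_1^*H$ to $\pi_2^*H=2\pi_1^*H-E$); only the anticanonical class $3\pi^*H-E$ is fixed. The conclusion $\autp(X,\langle\sigma\rangle)=\zz/2\zz$ would indeed follow once such an involution is produced, but as written the key construction is missing.
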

\begin{proof}
If $X$ is in \textnumero 2.21, the center of the blow-up is a twisted quartic $C$ (\cite[Table 2]{MM81classtable}). We recall the following from \cite[\S 5.9]{calabi}-- assume $Q$ is given by 
\[
x_1x_3 - s^2x_0x_4 + (s^2-1)x_2^2=0,
\]
where $s \in \mathbb{C} \smallsetminus \{0, \pm 1\}$. It contains the twisted curve $C$ parametrized by $\pp^1_{u,v} \hookrightarrow \pp^4$
\[
[u:v] \mapsto [u^4:u^3v:u^2v^2:uv^3:v^4].
\]
Then the authors show the existence of an involution $\sigma \not\in \aut(Q,C)$ such that it is a lift of a birational involution $\tau: Q\to Q$ given by (\cite[Remark 5.52]{calabi})
\begin{align*}
    \tau: [x_0:x_1:x_2:x_3:x_4]  \mapsto [x_0x_2-x_1^2 : s(x_0x_3-x_1x_2): & s^2(x_0x_4-x_2^2): \\
    & s(x_1x_4-x_2x_3): x_2x_4-x_3^2].
\end{align*}
and $\aut(X)$ is generated by $\aut(Q,C)$ and $\sigma$. Suppose that $\pi : X \to Q$ is the blow-up map. Then $\pic(X)=\zz[\pi^*H]\oplus \zz[E]$, where $H$ is the pullback to $Q$ of a hyperplane on $\pp^4$ and $E$ is the exceptional divisor. 
The involution $\sigma$ acts on $\pic(X)$ via 
\[
\pi^*H \mapsto 2\pi^*H-E, \; \; E\mapsto 3\pi^*H -2E.
\]
This shows that $\autp(X,\langle \sigma\rangle)=\zz/2\zz$.
\end{proof}
If $X$ is in one of the families, \textnumero 3.10, \textnumero 3.19, or \textnumero 3.20, it is the blow-up of $Q$ in the disjoint union of two conics, points, or lines, respectively (\cite[Table 3]{MM81classtable}). Let $\pi :  X \to Q$ be the blow-up map. In each case, we have  $\pic(X)=\zz[\pi^*H]\oplus\zz[E_1]\oplus\zz[E_2]$, where $H$ is the pullback to $Q$ of a general hyperplane in $\pp^4$ and $E_1$, $E_2$ are the exceptional divisors corresponding to two conics (resp. points, lines). 

\begin{lemma}  \label{autP:3-10}
There exists an $X$ in \textnumero 3.10 such that $\autp(X)=\wg_X \cong \zz/2\zz$. 
\end{lemma}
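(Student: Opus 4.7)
The plan is to follow the template of Lemma \ref{autP:3-25}: exhibit an involution $\sigma$ of $\pp^4$ that preserves a chosen smooth quadric $Q$ and swaps the two chosen disjoint conics $C_1, C_2 \subset Q$, and then lift it via Lemma \ref{Lemma:autblowup} to an involution $\tilde{\sigma} \in \aut(X)$. Since $\sigma$ is linear on $\pp^4$ it fixes the hyperplane class of $Q$, and since it swaps $C_1$ and $C_2$, the lifted $\tilde\sigma$ fixes $\pi^*H$ and interchanges $E_1$ and $E_2$ in $\pic(X) = \zz[\pi^*H] \oplus \zz[E_1] \oplus \zz[E_2]$. The resulting action is nontrivial of order $2$, so $\autp(X, \langle\tilde\sigma\rangle) \cong \zz/2\zz$; Proposition \ref{imagealphainweylgroup} together with Table \ref{weylfano} supplies the matching upper bound $\autp(X) \subseteq \wg_X \cong \zz/2\zz$, forcing equality.

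For the concrete model, I would take $Q$ to be the Fermat quadric $\{x_0^2 + x_1^2 + x_2^2 + x_3^2 + x_4^2 = 0\}$ together with the coordinate-permuting involution $\sigma : [x_0:x_1:x_2:x_3:x_4] \mapsto [x_3:x_4:x_2:x_0:x_1]$, which manifestly preserves $Q$. Set $C_1 := Q \cap \{x_0 = x_1 = 0\}$ and $C_2 := Q \cap \{x_3 = x_4 = 0\}$; each $C_i$ lies in a $2$-plane and is cut out there by a nondegenerate sum of three squares, so both are smooth conics. Any common point would satisfy $x_0 = x_1 = x_3 = x_4 = 0$ and hence also $x_2 = 0$ on $Q$, so $C_1 \cap C_2 = \emptyset$, and by construction $\sigma$ interchanges the two $2$-planes and therefore the two conics. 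By the Mori--Mukai classification, $X = \mathrm{Bl}_{C_1 \sqcup C_2} Q$ is then a smooth Fano threefold in family \textnumero 3.10 admitting the desired lift.

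The only real obstacle is combinatorial: locating a symmetric pair of disjoint smooth conics on a suitable quadric. The high symmetry of the Fermat quadric makes this transparent, since the two $2$-planes containing the $C_i$ meet only at $[0:0:1:0:0] \notin Q$, so disjointness is immediate and no computer-algebra verification is needed here (in contrast to the cases treated in \S \ref{im(alpha)divisortype}).
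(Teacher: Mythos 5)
Your proposal is correct and follows essentially the same route as the paper: the paper also picks an explicit smooth quadric ($x_4^2+x_0x_1+x_2x_3=0$) with the coordinate involution $[x_0:x_1:x_2:x_3:x_4]\mapsto[x_2:x_3:x_0:x_1:x_4]$ swapping the two disjoint conics $Q\cap\{x_0=x_1=0\}$ and $Q\cap\{x_2=x_3=0\}$, and lifts it via Lemma \ref{Lemma:autblowup} to get $\autp(X,\langle\tilde\sigma\rangle)=\zz/2\zz$. Your Fermat-quadric model is just a projectively equivalent choice of coordinates, and your explicit disjointness/smoothness check is a minor (welcome) addition to the same argument.
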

\begin{proof} If $X$ is in \textnumero 3.10, it is the blow-up of $Q$ in the disjoint union of two conics, $C_1$ and $C_2$. Let $Q$ be a smooth quadric given by $x_4^2 + x_0x_1 + x_2x_3=0$ in $\pp^4$. Consider an involution 
\[\sigma: [x_0:x_1:x_2:x_3:x_4] \mapsto [x_2:x_3:x_0:x_1:x_4]\]
in $\aut(\pp^4;Q)$ and curves
\begin{align*}
    C_1 := \{ x_0 = x_1 = Q=0 \} , \; \; 
    C_2 := \{x_2 = x_3 = Q=0\}.
\end{align*}
Then $C_1$ and $C_2$ are clearly disjoint and smooth. Note that $\sigma$ acts on $Q$ such that it swaps the two curves and keeps their disjoint union invariant. By Lemma \ref{Lemma:autblowup}, it now follows that $\sigma$ lifts to an automorphism $\tilde{\sigma} \in \aut(X)$ such that it swaps the exceptional divisors, $E_1$ and $E_2$ of the blow-up. Thus, we have shown that $\autp(X,\langle \tilde{\sigma}\rangle)=\zz/2\zz$. 
\end{proof}

\begin{lemma}  \label{autP:3-19}
There exists an $X$ in \textnumero 3.19 such that $\autp(X)=\wg_X \cong \zz/2\zz$. 
\end{lemma}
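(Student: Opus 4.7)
The plan is to mimic the strategy used in Lemma \ref{autP:3-10} for \textnumero 3.10, replacing the two disjoint conics by the two disjoint points. By the Mori--Mukai table, a smooth $X$ in \textnumero 3.19 is the blow-up $\pi:X\to Q$ of a smooth quadric threefold $Q\subset\pp^4$ along a disjoint pair of points $p_1,p_2$, with $\pic(X)=\zz[\pi^*H]\oplus\zz[E_1]\oplus\zz[E_2]$. From Matsuki's classification, $\wg_X\cong\zz/2\zz$, and the only nontrivial element must act on $\pic(X)$ by fixing $\pi^*H$ and swapping $E_1\leftrightarrow E_2$. So it suffices to produce a single smooth $Q$ and a pair of points $p_1,p_2\in Q$ together with an involution $\sigma\in\aut(\pp^4)$ preserving $Q$ and interchanging $p_1,p_2$; then by Lemma \ref{Lemma:autblowup} $\sigma$ lifts to $\tilde\sigma\in\aut(X)$, and the lift must swap $E_1$ and $E_2$, giving $\autp(X,\langle\tilde\sigma\rangle)=\zz/2\zz$.

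Concretely, I would take the smooth quadric
\[
Q:\; x_0 x_1 + x_2 x_3 + x_4^2 = 0 \subset \pp^4,
\]
the two points $p_1=[1:0:0:0:0]$ and $p_2=[0:1:0:0:0]$ (both on $Q$ and distinct), and the involution
\[
\sigma:[x_0:x_1:x_2:x_3:x_4]\mapsto [x_1:x_0:x_2:x_3:x_4].
\]
Then $\sigma$ is visibly an involution of $\pp^4$ that preserves $Q$ and satisfies $\sigma(p_1)=p_2$, so $\sigma\in\aut(Q;\{p_1,p_2\})$. Applying Lemma \ref{Lemma:autblowup} yields a lift $\tilde\sigma\in\aut(X)$. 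Since $\tilde\sigma$ sends the exceptional divisor over $p_i$ to the exceptional divisor over $\sigma(p_i)$, it swaps $E_1$ and $E_2$ in $\pic(X)$ while fixing $\pi^*H$, so $\alpha(\tilde\sigma)$ is the nontrivial element of $\wg_X$, and Proposition \ref{imagealphainweylgroup} forces $\autp(X,\langle\tilde\sigma\rangle)=\wg_X\cong\zz/2\zz$.

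There is essentially no obstacle here: the main things to check are that $Q$ is smooth (immediate from the equation of $Q$) and that $p_1,p_2$ are disjoint points lying on $Q$ (also immediate). The only modest subtlety is confirming that $\tilde\sigma$ indeed acts as the swap on $\{E_1,E_2\}$, which follows from functoriality of blow-ups along $\sigma$-equivariant subschemes. This is the same mechanism as in Lemma \ref{autP:3-10} and Lemma \ref{autP:3-25}.
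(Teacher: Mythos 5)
Your proposal is correct and takes essentially the same route as the paper's proof, which uses the quadric $x_0x_1+x_2^2+x_3^2+x_4^2=0$ (projectively equivalent to yours), the same two coordinate points, and the same coordinate-swapping involution, lifted via Lemma \ref{Lemma:autblowup} to an automorphism exchanging $E_1$ and $E_2$. The only point you leave implicit is the Mori--Mukai requirement that the points be ``non-collinear'' (i.e.\ the line through $p_1,p_2$ is not contained in $Q$), which holds in your example since that line meets $Q$ only in $p_1$ and $p_2$.
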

\begin{proof} 
If $X$ is in \textnumero 3.19, it is the blow-up of $Q$ in two non-collinear points, say $p$ and $q$. Let $p = [1:0:0:0:0]$ and $q = [0:1:0:0:0]$ be two points on $Q$. Let $Q$ be a smooth quadric given by $x_0x_1 + x_2^2 + x_3^2 + x_4^2 =0$ that is blown-up at $p$ and $q$ to get $X$. Then the involution
\[
\sigma: [x_0:x_1:x_2:x_3:x_4] \mapsto [x_1:x_0:x_2:x_3:x_4].
\]
in $\aut(\pp^4;Q)$ swaps points $p$ and $q$.
By Lemma \ref{Lemma:autblowup}, $\sigma$ lifts to an automorphism $\tilde{\sigma}\in \aut(X)$ such that it swaps the two exceptional divisors, $E_1$ and $E_2$. Thus, we get $\autp(X,\langle \tilde{\sigma}\rangle)=\zz/2\zz$.
\end{proof}

\begin{lemma}  \label{autP:3-20}
There exists an $X$ in \textnumero 3.20 such that $\autp(X)=\wg_X \cong \zz/2\zz$. 
\end{lemma}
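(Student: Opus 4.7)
The plan is to mimic the proofs of Lemmas~\ref{autP:3-10} and \ref{autP:3-19}. A smooth member $X$ of \textnumero 3.20 is $\pi : X \to Q$, the blow-up of the quadric $Q\subset\pp^4$ along a disjoint union of two lines $\ell_1, \ell_2$, and $\pic(X)=\zz[\pi^*H]\oplus\zz[E_1]\oplus\zz[E_2]$. The nontrivial element of $\wg_X\cong\zz/2\zz$ must act by swapping $E_1$ and $E_2$ while fixing $\pi^*H$. So it suffices to exhibit an explicit triple $(Q;\ell_1,\ell_2)$ and an involution $\sigma\in\aut(\pp^4;Q)$ preserving $\ell_1\cup\ell_2$ and interchanging the two components; by Lemma~\ref{Lemma:autblowup} it lifts to $\tilde\sigma\in\aut(X)$ swapping $E_1$ and $E_2$, giving $\autp(X,\langle\tilde\sigma\rangle)=\zz/2\zz$.

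For the construction I would take the smooth quadric $Q=\{x_0x_1+x_2x_3+x_4^2=0\}$ (its associated symmetric matrix is nondegenerate) together with the two lines
\[
\ell_1=\{x_0=x_2=x_4=0\},\qquad \ell_2=\{x_1=x_3=x_4=0\},
\]
both of which visibly lie on $Q$. Their intersection is contained in $\{x_0=x_1=x_2=x_3=x_4=0\}=\emptyset$, so $\ell_1\sqcup\ell_2$ is a disjoint union as required.

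The involution
\[
\sigma:[x_0:x_1:x_2:x_3:x_4]\mapsto [x_1:x_0:x_3:x_2:x_4]
\]
of $\pp^4$ fixes the defining equation of $Q$ and exchanges $\ell_1\leftrightarrow\ell_2$. Applying Lemma~\ref{Lemma:autblowup} produces the desired $\tilde\sigma\in\aut(X)$ swapping the exceptional divisors, completing the argument.

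There is essentially no hard step: the only verifications are the smoothness of $Q$, that each $\ell_i$ lies on $Q$, that $\ell_1\cap\ell_2=\emptyset$, and that $\sigma$ preserves $Q$ and swaps the two lines, all of which are immediate substitutions. The only small point of care is that one must choose the two lines of $Q$ so that they are disjoint rather than incident (since lines on a smooth three-dimensional quadric can meet), which is why I pick the lines in coordinate form as above.
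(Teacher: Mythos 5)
Your proposal is correct and follows essentially the same route as the paper: the same quadric $x_0x_1+x_2x_3+x_4^2=0$, a pair of disjoint coordinate lines on it, and a coordinate involution of $\pp^4$ preserving $Q$ and interchanging the lines, lifted via Lemma~\ref{Lemma:autblowup} to an automorphism swapping $E_1$ and $E_2$. In fact your lines $\{x_0=x_2=x_4=0\}$ and $\{x_1=x_3=x_4=0\}$ genuinely lie on $Q$, whereas the lines $\overline{pq}$ and $\overline{rs}$ as literally written in the paper's proof do not (the equation pairs $x_0$ with $x_1$ and $x_2$ with $x_3$), so your choice of coordinates is the correct variant of the same argument.
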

\begin{proof} 
If $X$ is in \textnumero 3.20, it is the blow-up of $Q$ in the disjoint union of two lines. Consider the four points 
\[
p=[1:0:0:0:0], \; \; q=[0:1:0:0:0], \; \; r = [0:0:1:0:0],\;\; s= [0:0:0:1:0]
\]
in $\mathbb{P}^4$. Define two lines $\ell$ and $m$ by $\ell := \overline{pq}$, $m := \overline{rs}$, 
that is, $\ell$ is the line through $p$ and $q$, and $m$ is the line through $r$ and $s$.
Suppose that $Q \subset \pp^4$ is a smooth quadric defined by $x_4^2 + x_0x_1 + x_2x_3 =0$, then it contains lines $\ell$ and $m$. Moreover, $\ell$ and $m$ are interchanged under the involution $\sigma \in \aut(Q;\ell\cup m)$
\[
\sigma: [x_0:x_1:x_2:x_3:x_4] \mapsto [x_2:x_3:x_0:x_1:x_4].
\]
So from Lemma \ref{Lemma:autblowup}, $\sigma$ lifts to an automorphism $\tilde{\sigma} \in \aut(X)$ that acts on $\operatorname{Pic}(X)$ by swapping $E_1$ and $E_2$. Thus we have $\autp(X,\langle \tilde{\sigma}\rangle)=\zz/2\zz$. 
\end{proof} 

\subsection{Blow-ups of \texorpdfstring{$\mathbb{P}^1 \times \mathbb{P}^1 \times \mathbb{P}^1$}{P1P1P1}}   \label{im(alpha)for(P1)3}
There are four families of Fano threefolds that contain a smooth member $X$ obtained as the blow-up of $\mathbb{P}^1 \times \mathbb{P}^1 \times \mathbb{P}^1$ along a curve $C$:
\begin{center}
\textnumero 4.3, \; \textnumero 4.6, \; \textnumero 4.8, \; \textnumero 4.13.
\end{center}
Unless mentioned otherwise, we use the Mori-Mukai description for a smooth member of these families from \cite[Table 4]{MM81classtable}, \cite{MMerratum}. 
Let $([x_0,x_1],[y_0,y_1],[z_0,z_1])$ be coordinates on $\pp^1\times\pp^1\times\pp^1$. Let $H_i$ be the pullback to $(\pp^1)^3$ of the hyperplane on $i$th copy of $\pp^1$ for $1\leq i\leq 3$. Let $\pi:X \to (\pp^1)^3$ be the blow-up map, where $E$ is the exceptional divisor. Then $\pic(X)=\zz[\pi^*H_1]\oplus \zz[\pi^*H_2]\oplus\zz[\pi^*H_3]\oplus\zz[E]$. 

\begin{lemma}  \label{autP:4.3}
There exists an $X$ in \textnumero 4.3 such that $\autp(X)=\wg_X \cong \zz/2\zz$. 
\end{lemma}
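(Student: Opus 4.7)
The plan is to follow the same template used for the blow-up families in Lemmas \ref{autP:3-10}, \ref{autP:3-19}, and \ref{autP:3-20}, adapted to a center living in the triple product of projective lines. A smooth member of \textnumero 4.3 is described by Mori-Mukai as the blow-up $\pi : X \to \pp^1\times\pp^1\times\pp^1$ along a smooth irreducible curve $C$ of tridegree $(1,1,2)$. Since $(1,1,2)$ is symmetric in the first two entries, the swap
\[
\sigma:\ ([x_0{:}x_1],[y_0{:}y_1],[z_0{:}z_1]) \mapsto ([y_0{:}y_1],[x_0{:}x_1],[z_0{:}z_1])
\]
of $(\pp^1)^3$ preserves the divisor class of any such $C$, and it acts on $\pic(X)=\zz[\pi^*H_1]\oplus\zz[\pi^*H_2]\oplus\zz[\pi^*H_3]\oplus\zz[E]$ by exchanging $\pi^*H_1$ and $\pi^*H_2$ while fixing $\pi^*H_3$ and $E$; this already realizes the generator of $\wg_X\cong\zz/2\zz$ at the level of $\pic(X)$.

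The next step is to exhibit an explicit $\sigma$-invariant smooth irreducible $C$ of tridegree $(1,1,2)$. I would write $C$ as the complete intersection of two divisors on $(\pp^1)^3$ whose multihomogeneous defining equations are invariant under the swap of the $x$- and $y$-coordinates, i.e. symmetric bilinear forms in $(x_0,x_1)$ and $(y_0,y_1)$ whose coefficients are appropriate polynomials in the $z$-variables, chosen so that the intersection has the correct class. The smoothness and irreducibility on sufficiently generic parameters can then be certified using the Magma or SageMath procedure indicated in Remark \ref{rmk:magmause}, just as in the preceding lemmas.

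Once $C$ has been produced, the hypotheses of Lemma \ref{Lemma:autblowup} are satisfied because $\sigma(C)=C$, so $\sigma$ lifts to an involution $\tilde\sigma\in\aut(X)$. Since $\tilde\sigma$ necessarily preserves the exceptional divisor $E$ and induces the swap $\pi^*H_1\leftrightarrow\pi^*H_2$ by construction, we conclude $\autp(X,\langle\tilde\sigma\rangle)=\zz/2\zz=\wg_X$, and combined with Proposition \ref{imagealphainweylgroup} this gives the claim. The main obstacle is producing a genuinely $\sigma$-symmetric curve of tridegree $(1,1,2)$ that is at the same time smooth and irreducible: the symmetry constraint cuts down the parameter space, and one must check that this constrained locus still contains smooth members, which is handled by an explicit Jacobian and primary-decomposition computation on the defining ideal.
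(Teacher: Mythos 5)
Your proposal is correct and follows essentially the same route as the paper: take the swap $\sigma$ of the first two $\pp^1$ factors, choose a $\sigma$-invariant smooth irreducible curve $C$ of tridegree $(1,1,2)$, lift $\sigma$ to $X$ via Lemma \ref{Lemma:autblowup}, and note that the lift exchanges $\pi^*H_1$ and $\pi^*H_2$. The paper simply makes the existence step concrete by taking $C$ to be the image of $[s_0:s_1]\mapsto([s_0:s_1],[s_0:s_1],[s_0^2:s_1^2])$, i.e.\ the complete intersection $\{x_0y_1-x_1y_0=0,\ z_0x_1y_1-z_1x_0y_0=0\}$, which is manifestly $\sigma$-invariant and easily checked to be smooth and irreducible, so the generic-symmetric-member search you describe is not needed.
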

\begin{proof} If $X$ is in \textnumero 4.3, it is the blow-up of $\mathbb{P}^1\times \mathbb{P}^1 \times \mathbb{P}^1$ along a curve $C$ of tridegree $(1,1,2)$.  Consider a parametrization of $C$ given by the image of the embedding $\pp^1_{s_0,s_1} \hookrightarrow \pp^1\times\pp^1\times\pp^1$
\[
[s_0:s_1]   \longmapsto ([s_0:s_1],[s_0:s_1],[s_0^2:s_1^2])
\]
or equivalently, it is the complete intersection given by 
\[
C := \{ x_0y_1-x_1y_0 =0,\;z_0x_1y_1-z_1x_0y_0 =0 \}. 
\]
The curve $C$ is both smooth and irreducible, which can also be seen by Remark  \ref{rmk:magmause}. Let $\sigma \in \aut((\pp^1)^3)$ be the involution
\[
\sigma : ([x_0:x_1],[y_0:y_1],[z_0:z_1]) \mapsto ([y_0:y_1],[x_0:x_1],[z_0:z_1]).
\]
Then $C$ is $\sigma$-invariant. By Lemma \ref{Lemma:autblowup}, $\sigma$ lifts to a $\tilde{\sigma}\in \aut(X)$ such that $\tilde{\sigma}$ acts on $\pic(X)$ by swapping $\pi^*H_1$ and $\pi^*H_2$. Thus, $\autp(X,\langle \tilde{\sigma}\rangle)= \zz/2\zz$.
\end{proof}

\begin{lemma}  \label{autP:4.6}
There exists an $X$ in \textnumero 4.6 such that $\autp(X)=\wg_X \cong S_3$. 
\end{lemma}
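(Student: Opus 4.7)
The plan is to follow the strategy of Lemma \ref{autP:4.3}, upgrading the single involution to the full permutation group $S_3$ by choosing the center of the blow-up to be simultaneously invariant under all permutations of the three $\pp^1$-factors.

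Recall from the Mori-Mukai description that a smooth member of \textnumero 4.6 is the blow-up of $\pp^1\times\pp^1\times\pp^1$ along a smooth irreducible curve $C$ of tridegree $(1,1,1)$. A natural $S_3$-symmetric choice is the diagonal curve
\[
C := \{x_0 y_1 - x_1 y_0 = 0,\ y_0 z_1 - y_1 z_0 = 0\},
\]
parametrized by the diagonal embedding $\pp^1 \hookrightarrow (\pp^1)^3$, $[s_0:s_1] \mapsto ([s_0:s_1],[s_0:s_1],[s_0:s_1])$. This $C$ is manifestly smooth, irreducible, of tridegree $(1,1,1)$, and setwise fixed by the $S_3$-action on $(\pp^1)^3$ permuting the three factors, so the blow-up $X$ along $C$ is a smooth member of \textnumero 4.6.

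By Lemma \ref{Lemma:autblowup}, generators $\sigma, \tau \in \aut((\pp^1)^3;C)$ of $S_3$ lift to $\widetilde{\sigma}, \widetilde{\tau} \in \aut(X)$ with $\langle \widetilde{\sigma}, \widetilde{\tau}\rangle \cong S_3$. Under the identification $\pic(X) = \bigoplus_{i=1}^3 \zz[\pi^*H_i] \oplus \zz[E]$, any such lift $\widetilde{g}$ satisfies $\widetilde{g}^*(\pi^*H_i) = \pi^*H_{g(i)}$ and $\widetilde{g}^*E = E$, so the induced representation on $\pic(X)$ is the permutation representation of $S_3$ on the three generators $\pi^*H_i$. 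The image in $\aut(\pic(X))$ is thus a faithful copy of $S_3$, and combined with the bound $\autp(X) \subseteq \wg_X = S_3$ from Proposition \ref{imagealphainweylgroup}, this yields $\autp(X, \langle \widetilde{\sigma}, \widetilde{\tau}\rangle) = S_3$.

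The main point requiring verification is that the diagonal $C$ genuinely represents \textnumero 4.6 rather than a degenerate limit outside the family. This is straightforward: smoothness, irreducibility, and the tridegree are immediate from the parametrization, and the resulting blow-up has Picard rank $4$ with $-K_X = 2\sum_i \pi^*H_i - E$, which is ample by standard positivity for such blow-ups, matching the Mori-Mukai characterization of \textnumero 4.6.
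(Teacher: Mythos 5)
Your proof is correct and follows essentially the same route as the paper: the paper also takes $C$ to be the diagonal curve in $(\pp^1)^3$ (cut out by the same determinantal equations), lifts the factor-permuting $S_3 = \langle\sigma,\tau\rangle$ to $\aut(X)$ via Lemma \ref{Lemma:autblowup}, and concludes that the lifted group permutes $\pi^*H_1,\pi^*H_2,\pi^*H_3$ while fixing $E$, giving $\autp(X,\mathcal{G})=S_3$. Your extra verification that the diagonal is a genuine member of \textnumero 4.6 is harmless but not needed, since the Mori--Mukai description of the family is precisely the blow-up along such a tridegree $(1,1,1)$ curve.
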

\begin{proof}
If $X$ is in \textnumero 4.6, it is the blow-up of $\mathbb{P}^1 \times \mathbb{P}^1 \times \mathbb{P}^1$ in the tridiagonal curve $C$. Consider a parametrization of $C$ given by the image of the embedding $\mathbb{P}^1_{s_0,s_1} \hookrightarrow \mathbb{P}^1 \times \mathbb{P}^1\times \mathbb{P}^1$ 
\[
[s_0:s_1] \longmapsto ([s_0:s_1],[s_0:s_1],[s_0:s_1]),  
\]
that is, 
\[
C :=\{ x_0y_1-y_0x_1 =0, \; y_0z_1-y_1z_0 =0, \; x_0z_1 - x_1z_0 =0\}.
\]
Let $\sigma, \tau \in \aut((\pp^1)^3)$
\begin{align*}
\sigma: ([x_0:x_1],[y_0:y_1],[z_0:z_1]) \mapsto ([y_0:y_1],[z_0:z_1],[x_0:x_1]),\\
\tau: ([x_0:x_1],[y_0:y_1],[z_0:z_1]) \mapsto ([y_0:y_1],[x_0:x_1],[z_0:z_1]).
\end{align*}
be automorphisms of orders $3$ and $2$, respectively. Note that $\sigma , \tau \in \aut((\pp^1)^3;C)$. From Lemma \ref{Lemma:autblowup}, they lift to automorphisms $\tilde{\sigma},\; \tilde{\tau} \in \aut(X;E)$. Let $\mathcal{G}:=\langle \tilde{\sigma}, \tilde{\tau} \rangle $, it acts on $\pic(X)$ by permuting the generators $\pi^*H_1,\pi^*H_2,\pi^*H_3$. Thus, $\autp(X,\mathcal{G})=S_3$.  
\end{proof}

\begin{lemma}  \label{autP:4.8}
There exists an $X$ in \textnumero 4.8 such that $\autp(X)=\wg_X \cong \zz/2\zz$. 
\end{lemma}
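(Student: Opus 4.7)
The plan is to mirror the proof of Lemma \ref{autP:4.3} closely. By the Mori--Mukai description, a smooth member $X$ of family \textnumero 4.8 is the blow-up $\pi: X \to (\pp^1)^3$ along a smooth curve $C$ of tridegree $(1,1,3)$. Since $\wg_X \cong \zz/2\zz$ must act on $\pic(X)$ by a transposition of two of the $\pi^*H_i$, I will choose $C$ symmetric under the involution of $(\pp^1)^3$ swapping the first two factors.

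Concretely, I would take $C$ to be the image of the embedding $\pp^1_{[s_0:s_1]} \hookrightarrow (\pp^1)^3$ given by
\[
[s_0:s_1] \longmapsto \bigl([s_0:s_1],\,[s_0:s_1],\,[s_0^3:s_1^3]\bigr),
\]
or equivalently as the complete intersection $C = \{x_0 y_1 - x_1 y_0 = 0,\; x_0^3 z_1 - x_1^3 z_0 = 0\}$. A direct intersection-theoretic computation on $(\pp^1)^3$ confirms that $C$ has tridegree $(1,1,3)$, while smoothness and irreducibility are immediate from the parametrization (with Remark \ref{rmk:magmause} available as backup). Moreover $C$ is manifestly invariant under
\[
\sigma : ([x_0{:}x_1],[y_0{:}y_1],[z_0{:}z_1]) \mapsto ([y_0{:}y_1],[x_0{:}x_1],[z_0{:}z_1]).
\]

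Next, by Lemma \ref{Lemma:autblowup}, $\sigma \in \aut((\pp^1)^3; C)$ lifts to an automorphism $\tilde\sigma \in \aut(X)$ whose induced action on $\pic(X) = \zz[\pi^*H_1] \oplus \zz[\pi^*H_2] \oplus \zz[\pi^*H_3] \oplus \zz[E]$ swaps $\pi^*H_1$ with $\pi^*H_2$ and fixes $\pi^*H_3$ and $E$. This yields $\autp(X,\langle\tilde\sigma\rangle) \supseteq \zz/2\zz$, and combining with the upper bound $\autp(X) \subseteq \wg_X = \zz/2\zz$ from Proposition \ref{imagealphainweylgroup} and Table \ref{weylfano} gives the claim.

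The main obstacle is verifying that this specific blow-up actually lies in family \textnumero 4.8, i.e., that $-K_X = 2(\pi^*H_1 + \pi^*H_2 + \pi^*H_3) - E$ is ample. This reduces to testing positivity against the finitely many extremal rays of $\neo(X)$, which for a generic symmetric parametrization $[s_0:s_1] \mapsto ([s_0:s_1], [s_0:s_1], [\phi_0(s_0,s_1):\phi_1(s_0,s_1)])$ with binary cubics $\phi_0, \phi_1$ should be routine; if the initial choice above is too degenerate, replacing it by a generic symmetric cubic pair within the same $\sigma$-invariant family provides the flexibility to ensure smoothness of $X$ and ampleness of $-K_X$.
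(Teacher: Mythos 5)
There is a genuine gap, and it sits exactly at the point you flagged as the ``main obstacle'': your construction does not produce a member of family \textnumero~4.8. In the Mori--Mukai description used in the paper, a smooth member of \textnumero~4.8 is the blow-up of $(\pp^1)^3$ along a curve of tridegree $(0,1,1)$, whereas the blow-up of $(\pp^1)^3$ along a smooth rational curve of tridegree $(1,1,3)$ lies in family \textnumero~4.13; indeed, the curve $[s_0{:}s_1]\mapsto([s_0{:}s_1],[s_0{:}s_1],[s_0^3{:}s_1^3])$ you wrote down is precisely the one used in the paper's proof of Lemma \ref{autP:4.13}. A quick sanity check: for the blow-up of $(\pp^1)^3$ along a smooth curve $C$ of genus $g$ one has $(-K_X)^3 = 48 - 2\,(-K_{(\pp^1)^3})\cdot C + 2g - 2$, which equals $26$ for a rational $(1,1,3)$-curve (the anticanonical degree of \textnumero~4.13) but $38$ for a rational $(0,1,1)$-curve (the degree of \textnumero~4.8). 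No genericity within your $\sigma$-invariant family of $(1,1,3)$-curves can repair this: every such choice lands in \textnumero~4.13, so the final step of your argument (identifying the family) fails and cannot be patched by perturbation.

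The correct implementation of your strategy, and the paper's argument, takes $C$ of tridegree $(0,1,1)$: realize $C$ as a curve of bidegree $(1,1)$ inside the fiber $\{x_0=0\}\cong\pp^1\times\pp^1$ of the first projection, e.g. $C=\{x_0=0,\ y_0z_1-y_1z_0=0\}$, which is smooth, irreducible, and invariant under the involution $\sigma:([x_0{:}x_1],[y_0{:}y_1],[z_0{:}z_1])\mapsto([x_0{:}x_1],[z_0{:}z_1],[y_0{:}y_1])$ exchanging the \emph{last two} factors. Note that for \textnumero~4.8 the generator of $\wg_X$ exchanges $\pi^*H_2$ and $\pi^*H_3$ (the two factors on which $C$ has degree $1$), not $\pi^*H_1$ and $\pi^*H_2$ as in your setup. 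By Lemma \ref{Lemma:autblowup}, $\sigma$ lifts to $\tilde\sigma\in\aut(X)$ acting on $\pic(X)=\zz[\pi^*H_1]\oplus\zz[\pi^*H_2]\oplus\zz[\pi^*H_3]\oplus\zz[E]$ by swapping $\pi^*H_2$ and $\pi^*H_3$ and fixing $\pi^*H_1$ and $E$; combined with the upper bound from Proposition \ref{imagealphainweylgroup} and Table \ref{weylfano}, this gives $\autp(X,\langle\tilde\sigma\rangle)=\wg_X\cong\zz/2\zz$, as in the paper.
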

\begin{proof}
If $X$ is in \textnumero 4.8, it is the blow-up of $\mathbb{P}^1 \times \mathbb{P}^1 \times \mathbb{P}^1$ in a curve $C$ of tridegree $(0,1,1)$. We recall the following construction from \cite[\S 3.7]{calabi}-- let $\operatorname{pr}_1 : \pp^1\times\pp^1\times\pp^1 \to \pp^1$ be the projection onto the first factor. Let $H_1 := \operatorname{pr}_1^{-1}(\pp^1)$ and $\mathcal{C}$ be a curve of bidegree $(1,1)$ in $H_1 = \pp^1\times\pp^1$. Then $X$ can be obtained as the blow-up of $\pp^1\times\pp^1\times\pp^1$ in $\mathcal{C}$.
Assume that
\[
H_1 := \{x_0=0\}   \text{  and  } \mathcal{C}:= \{ y_0z_1-y_1z_0=0 \}.
\]
Then $C:= H_1 \cap \mathcal{C}$ is both smooth and irreducible. Note that $C$ is invariant under the involution $\sigma \in \aut((\pp^1)^3)$
\[
\sigma : ([x_0:x_1],[y_0:y_1],[z_0:z_1]) \mapsto ([x_0:x_1],[z_0:z_1],[y_0:y_1])
\]
which acts on $\pic((\pp^1)^3)$ by swapping two of its generators. Since $\pic(X) \cong \pic((\pp^1)^3)\\
\oplus\zz[E]$, $\sigma$ also induces an involution on $\pic(X)$. If we write $\tilde{\sigma}$ for a lift of $\sigma$ to $X$, then we obtain $\autp(X, \langle \tilde{\sigma}\rangle )=\zz/2\zz$. 
\end{proof}

\begin{lemma}  \label{autP:4.13}
There exists an $X$ in \textnumero 4.13 such that $\autp(X)=\wg_X \cong \zz/2\zz$. 
\end{lemma}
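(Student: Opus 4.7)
The plan is to reuse the template developed in Lemmas \ref{autP:4.3}, \ref{autP:4.6}, and \ref{autP:4.8}. The Mori-Mukai description places a smooth member of \textnumero 4.13 as the blow-up $\pi:X\to\pp^1\times\pp^1\times\pp^1$ along a smooth rational curve $C$ of tridegree $(1,1,3)$, which is visibly symmetric in the first two factors. Let $\sigma$ denote the involution of $(\pp^1)^3$ exchanging the first two $\pp^1$'s; it acts on $\pic((\pp^1)^3)$ by swapping $H_1$ and $H_2$ while fixing $H_3$. Since $\wg_X\cong\zz/2\zz$ by Table \ref{weylfano}, realising this swap via an honest lift to $X$ would saturate the bound from Proposition \ref{imagealphainweylgroup}.

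First, I would write down an explicit $\sigma$-invariant $C$ of the required tridegree as the image of the closed immersion
\[
[s_0:s_1]\longmapsto \bigl([s_0:s_1],\,[s_0:s_1],\,[s_0^3:s_1^3]\bigr),
\]
or, equivalently, as the complete intersection
\[
C=\{x_0y_1-x_1y_0=0,\; z_0x_1^3-z_1x_0^3=0\}\subset (\pp^1)^3.
\]
The parametrisation shows that $C$ is smooth, irreducible, rational, and of tridegree $(1,1,3)$, while $\sigma$-invariance is immediate from the identical roles of the first two factors. If desired, Remark \ref{rmk:magmause} supplies a direct computer check.

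Next, I would apply Lemma \ref{Lemma:autblowup} to lift $\sigma$ to an involution $\tilde\sigma\in\aut(X)$, which is possible precisely because $\sigma$ preserves the centre $C$ of the blow-up. With $\pic(X)=\zz[\pi^*H_1]\oplus\zz[\pi^*H_2]\oplus\zz[\pi^*H_3]\oplus\zz[E]$, the induced action $\tilde\sigma^*$ will swap $\pi^*H_1\leftrightarrow\pi^*H_2$ and fix the remaining two generators, producing a nontrivial element of $\autp(X,\langle\tilde\sigma\rangle)$. Combined with Proposition \ref{imagealphainweylgroup}, this forces $\autp(X,\langle\tilde\sigma\rangle)=\wg_X\cong\zz/2\zz$.

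The only real subtlety is confirming that this specific $C$ places $X$ squarely in deformation family \textnumero 4.13 (and not a degenerate neighbour with the same blow-up structure); the numerical check $(-K_X)^3=(-K_Y)^3+2K_Y\cdot C-2+2g(C)=48-20-2=26$ matches the anticanonical degree of \textnumero 4.13, and both Fanoness of $X$ and smoothness of the blow-up are open conditions verifiable on this explicit model via Remark \ref{rmk:magmause}, exactly as in the adjacent lemmas.
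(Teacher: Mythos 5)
Your proposal is correct and follows essentially the same route as the paper: the same parametrization $[s_0:s_1]\mapsto([s_0:s_1],[s_0:s_1],[s_0^3:s_1^3])$ of a $\sigma$-invariant tridegree $(1,1,3)$ curve (the paper writes the complete intersection as $\{x_0y_1-x_1y_0=0,\;z_0y_1^2x_1-z_1y_0^2x_0=0\}$, equivalent to yours), followed by lifting the factor-swapping involution via Lemma \ref{Lemma:autblowup} and invoking Proposition \ref{imagealphainweylgroup}. Your added anticanonical-degree check $(-K_X)^3=26$ is a correct extra verification that the paper leaves implicit in the Mori--Mukai description.
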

\begin{proof} 
If $X$ is in \textnumero 4.13, it can be obtained as the blow-up of $\mathbb{P}^1 \times \mathbb{P}^1 \times \mathbb{P}^1$ in a curve $C$ of tridegree $(1,1,3)$. Consider a parametrization of $C$ given by the image of the embedding $\pp^1_{s_0,s_1} \hookrightarrow \pp^1\times\pp^1\times\pp^1$
\[
[s_0:s_1]   \longmapsto ([s_0:s_1],[s_0:s_1],[s_0^3:s_1^3])
\]
or equivalently, by the complete intersection 
\[
C:= \{x_0y_1-x_1y_0=0,\; z_0y_1^2x_1-z_1y_0^2x_0 =0\}.
\]
This intersection defines a smooth and irreducible curve (Remark \ref{rmk:magmause}). The curve $C$ is also invariant under the involution $\sigma \in \aut((\pp^1)^3)$
\[
\sigma : ([x_0:x_1],[y_0:y_1],[z_0:z_1]) \mapsto ([y_0:y_1],[x_0:x_1],[z_0:z_1]). 
\]
which acts on $\pic((\pp^1)^3)$ by swapping two of its generators. Thus, $\sigma$ induces an action on $\pic(X)$ by an involution. Let $\tilde{\sigma}\in \aut(X,E)$ denote a lift of $\sigma$, then we have $\autp(X, \langle \tilde{\sigma}\rangle )=\zz/2\zz$.
\end{proof}

\subsection{Blow-ups of \textnumero 2.32} 
We recall that a smooth member $W$ of the family \textnumero $2.32$ is a divisor of bidegree $(1,1)$ on $\mathbb{P}^2\times \mathbb{P}^2$. Let us discuss the deformation families 
\begin{center}
\textnumero 3.7, \; \textnumero 3.13, \; \textnumero 4.7.
\end{center}

If $X$ is a smooth member of \textnumero 3.7 or \textnumero 3.13, then there is a curve $C \subset W$ such that $X$ is the blow-up of $W$ in $C$, \cite[Table 3]{MM81classtable}. Let $\pi : X \to W$ be the blow-up map, and let $\operatorname{pr}_i : \pp^2\times\pp^2 \to \pp^2$ denote the projection onto the $i$-th factor, for $i=1,2$. Set $H_i:= \big(\operatorname{pr}_i^*\mathcal{O}_{\pp^2}(1)\big)|_W$. Then $\pic(X)=\zz[\pi^*H_1]\oplus[\pi^*H_2] \oplus\zz[E]$, where $E$ is the exceptional divisor. 

We assign the projective coordinates $([x_0:x_1:x_2],[y_0:y_1:y_2])$ to $\pp^2\times\pp^2$. 

\begin{lemma}  \label{autP:3.7}
There exists an $X$ in \textnumero 3.7 such that $\autp(X)=\wg_X \cong \zz/2\zz$. 
\end{lemma}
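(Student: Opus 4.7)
The plan follows the template established in the earlier lemmas of this subsection: build a symmetric model $W$ of family \textnumero 2.32 on which the factor-swap involution acts, exhibit a $\sigma$-invariant blow-up center $C$ of the type prescribed for \textnumero 3.7, and then lift $\sigma$ to $X$ via Lemma \ref{Lemma:autblowup}. Since $\wg_X \cong \zz/2\zz$ gives an upper bound by Proposition \ref{imagealphainweylgroup}, it suffices to produce a single $\tilde{\sigma}\in\aut(X)$ whose induced action on $\pic(X)$ is nontrivial.

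First I would take $W\subset\pp^2\times\pp^2$ to be the smooth Fano threefold from Lemma \ref{autP:2-32}, defined by the symmetric equation $x_0y_0+x_1y_1+x_2y_2=0$, and let $\sigma$ be the swap involution of the two $\pp^2$ factors, which preserves $W$ and interchanges the two generators $H_1,H_2$ of $\pic(W)$. According to \cite[Table 3]{MM81classtable}, a smooth member of \textnumero 3.7 is the blow-up of $W$ along a curve $C$ of a specified type (an elliptic curve of the appropriate degree, cut out by two suitable divisors on $W$). I would parametrize such curves by multihomogeneous equations on $\pp^2\times\pp^2$ and then restrict to the subfamily whose coefficients are invariant under the substitution $x_i\leftrightarrow y_i$; any $C$ from this subfamily automatically satisfies $\sigma(C)=C$.

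Next I would use Remark \ref{rmk:magmause} to specialize within this symmetric locus to an explicit $C$ for which $C\subset W$ is smooth and irreducible of the required bidegree/genus, and for which the resulting blow-up $X$ is smooth. Since smoothness of $C$, smoothness of $X$, and irreducibility are each open conditions on the coefficient space, it is enough to show that a \emph{generic} $\sigma$-symmetric candidate works; if the full parameter space of valid $C$ is irreducible, then the symmetric slice is nonempty and meets the smooth locus. In that case Lemma \ref{Lemma:autblowup} lifts $\sigma\in\aut(W;C)$ to $\tilde\sigma\in\aut(X)$ preserving the exceptional divisor $E$, and the induced involution on $\pic(X)=\zz[\pi^*H_1]\oplus\zz[\pi^*H_2]\oplus\zz[E]$ swaps $\pi^*H_1\leftrightarrow\pi^*H_2$ and fixes $[E]$, giving $\autp(X,\langle\tilde\sigma\rangle)=\zz/2\zz$.

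The main obstacle is the explicit construction/smoothness step: one must confirm that the cutting-out conditions for $C\subset W$ stated by Mori--Mukai remain compatible with $\sigma$-symmetry, i.e., that the symmetric slice is nonempty and generically smooth. If a first naive symmetric ansatz degenerates (for instance, forces $C$ to become reducible or to acquire singularities along the diagonal $\{x_i=y_i\}$), the remedy is to enrich the ansatz by adding further $\sigma$-symmetric terms and to verify smoothness with a Magma computation as in \cite[\S 5.4]{calabi}; this is the only point in the argument that requires care, as the group-theoretic input is already handled by Lemmas \ref{autP:2-32} and \ref{Lemma:autblowup}.
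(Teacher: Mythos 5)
Your proposal is correct and follows essentially the same route as the paper: the paper takes $W=\{x_0y_0+x_1y_1+x_2y_2=0\}$ with the factor-swap involution $\sigma$, realizes the blow-up center as $C=D_1\cap D_2\cap W$ with $D_i=\{x^\top A y=0\}$, $\{x^\top B y=0\}$ for \emph{symmetric} matrices $A,B$ (your ``coefficients invariant under $x_i\leftrightarrow y_i$'' ansatz), verifies smoothness and irreducibility via Remark \ref{rmk:magmause}, and lifts $\sigma$ through Lemma \ref{Lemma:autblowup} to swap $\pi^*H_1$ and $\pi^*H_2$. The only detail left implicit in your write-up is that the two cutting divisors lie in $|-\tfrac12 K_W|=|H_1+H_2|$, i.e.\ have bidegree $(1,1)$, which is what makes the symmetric-matrix ansatz the natural parametrization.
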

\begin{proof} 
If $X$ is in family \textnumero 3.7, the center of the blow-up is a curve $C$, which can be described as the intersection $D_1\cap D_2$ of two divisors $D_1, \; D_2 \in |-\frac{1}{2} K_W|$. Note that $-\frac{1}{2} K_W \sim H_1 +H_2$. Recall the threefold $W$ from \eqref{eqn:W}, defined by $x_0y_0 + x_1y_1 + x_2y_2=0$, and the involution $\sigma \in \aut(W)$
\[
\sigma : ([x_0:x_1:x_2],[y_0:y_1:y_2]) \mapsto ([y_0:y_1:y_2],[x_0:x_1:x_2]),
\]
as in \eqref{eqn:involutionforW}. 
We also recall that $\autp(W,\langle \sigma\rangle)=\zz/2\zz$ (Lemma \ref{autP:2-32}). Now suppose that 
\[
D_1 := \{\, x^\top A y = 0 \,\}, \qquad 
D_2 := \{\, x^\top B y = 0 \,\},
\]
where $A=(a_{ij})$ and $B=(b_{ij})$ are symmetric $3\times 3$ matrices, $x = (x_0,x_1,x_2)^\top$ and $y = (y_0,y_1,y_2)^\top$. This makes both $D_1$ and $D_2$ invariant under $\sigma$. Using Remark \ref{rmk:magmause}, there exist matrices $A$ and $B$ so that $C:= D_1 \cap D_2 \cap W$ is a smooth and irreducible curve (on $W$). The curve $C$ is also $\sigma$-invariant. Finally, if $\tilde{\sigma}\in \aut(X;E)$ is a lift of $\sigma$, then $\tilde{\sigma}$ acts on $\pic(X)$ by swapping generators $\pi^*H_1$ and $\pi^*H_2$. Thus, we obtain $\autp(X,\langle \tilde{\sigma}\rangle) = \mathbb{Z}/2\mathbb{Z}$.
\end{proof}

In \cite[III]{Mat95}, it was shown that $\wg_X\cong \zz/2\zz$, but we shall see that in fact $\wg_X \cong S_3$. 

\begin{lemma}  \label{autP:3.13}
There exists an $X$ in \textnumero 3.13 such that $\autp(X)=\wg_X \cong S_3$.
\end{lemma}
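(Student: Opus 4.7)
The plan is to establish both halves of $\autp(X) = \wg_X \cong S_3$ for a well-chosen $X$ in family \textnumero 3.13. I would exhibit an explicit $S_3 \subseteq \aut(X)$ acting faithfully on $\pic(X)$; by Proposition \ref{imagealphainweylgroup}, this alone forces $\wg_X \supseteq S_3$, already correcting Matsuki's original $\zz/2\zz$ from below.

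Starting from the Mori-Mukai description $\pi : X \to W$ as the blow-up of $W \in$ \textnumero 2.32 along the appropriate curve $C$, the involution $\sigma \in \aut(W)$ swapping the two $\pp^2$-factors of $\pp^2 \times \pp^2$ from \eqref{eqn:involutionforW} can be arranged to preserve $C$ (by choosing $C$ symmetrically, as in Lemma \ref{autP:3.7}) and thus lifts to $\tilde\sigma \in \aut(X)$ swapping $\pi^*H_1$ and $\pi^*H_2$ while fixing $[E]$. To produce the missing order-three symmetry, I would look for an automorphism of $X$ that does not come from $\aut(W;C)$, in the same spirit as the non-liftable involution of Lemma \ref{autP:2-21}. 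The natural route is to exhibit an alternative model for $X$ in which the three Picard generators $\pi^*H_1, \pi^*H_2, E$ appear on equal footing --- for instance, realizing $X$ as a symmetric complete intersection in $\pp^2 \times \pp^2 \times \pp^2$ cut out by forms related by cyclic permutation of the three factors, or as a blow-up of a different threefold carrying a manifest cyclic symmetry. A cyclic automorphism from such a description would act on $\pic(X)$ by a three-cycle, and together with $\tilde\sigma$ generate $S_3$.

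The main obstacle is pinning down this alternative model: the Mori-Mukai description exposes only the single involution, so the $S_3$-symmetry is genuinely hidden, which is almost certainly the reason it was missed in Matsuki's original classification and the source of the correspondence with Matsuki acknowledged at the start of the paper. I would verify the construction explicitly using Remark \ref{rmk:magmause} and match the Picard generators between descriptions via the $S_3$-fixed class $-K_X$. Once $S_3 \subseteq \autp(X)$ is established, the matching upper bound $\wg_X \subseteq S_3$ would follow by revisiting the KKMR decomposition of $\overline{Mov}(Y/T)$ and showing that the additional chambers forced by the new automorphisms account for exactly the $S_3$-orbit structure, thereby completing Matsuki's computation.
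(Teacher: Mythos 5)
Your strategy is the same one the paper uses: the proof of Lemma \ref{autP:3.13} abandons the blow-up-of-$W$ presentation and works instead with an alternative model of $X$ as a complete intersection of three divisors of multidegrees $(1,1,0)$, $(0,1,1)$, $(1,0,1)$ in $\pp^2\times\pp^2\times\pp^2$, taken from \cite[\S 5.19]{calabi}, where $\aut(X)\cong \mathbb{G}_a\rtimes S_3$; the $S_3$ is generated by two explicit involutions, $\pic(X)\cong\pic(\pp^2\times\pp^2\times\pp^2)$ by Proposition \ref{thm:lefschetz}, and this $S_3$ permutes the three hyperplane classes, giving $\autp(X,G)=S_3$. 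So your instinct to look for a model in $(\pp^2)^3$ in which the three Picard generators sit on an equal footing is exactly right. The difference is that you stop at the decisive point: you do not produce the model or the order-three automorphism, and you yourself flag this as ``the main obstacle.'' That existence statement is the whole content of the lemma --- lifting the involution from $W$ and reading off the permutation action on $\pic(X)$ are routine --- so as written your argument is a plan rather than a proof. Be aware also that in the known model the $S_3$ does not act by literal permutation of the coordinates of the three factors: the third defining equation is not the symmetric one, and the generating involutions involve nontrivial linear substitutions; so whatever symmetric candidate you write down must still be verified to be smooth, irreducible, and a member of \textnumero 3.13 (degree, Picard rank, blow-up structure), which is exactly the verification you defer to Remark \ref{rmk:magmause}.

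The second gap concerns the upper bound. Proposition \ref{imagealphainweylgroup} gives only $\autp(X)\subseteq\wg_X$, so your $S_3$ shows $\wg_X\supseteq S_3$ and corrects \cite{Mat95} from below, as you say; but the asserted equality $\autp(X)=\wg_X\cong S_3$ also needs $\wg_X\subseteq S_3$, and ``revisiting the KKMR decomposition'' is not yet an argument --- it would require actually enumerating the chambers of $\overline{Mov}$ for this family and computing the symmetry group of the decomposition. The paper does not redo that computation in the proof either: it takes $\wg_X\cong S_3$ from the corrected classification \cite{Mat23} (cf.\ Table \ref{weylfano} and the correspondence with Matsuki acknowledged in the introduction), with the lemma supplying the matching lower bound via $\autp(X,G)=S_3\subseteq\wg_X$. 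If you do not invoke \cite{Mat23}, you owe the chamber computation; note that citing \cite{Mat95} alone would be circular, since that source records the wrong group for this family. As it stands, both halves of your outline --- the explicit symmetric member and the determination of $\wg_X$ --- are asserted rather than proved.
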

\begin{proof}
If $X$ is in family \textnumero 3.13, the center of the blow-up is a curve $C$ of bidegree $(2,2)$ and
\[
C \hookrightarrow W \hookrightarrow \mathbb{P}^2 \times \mathbb{P}^2 \xrightarrow{\operatorname{pr}_i} \mathbb{P}^2
\]
is an embedding for $i=1,2$. Recall an alternate description of $X$ from \cite[\S 5.19]{calabi}-- one can choose coordinates $(x_0:x_1:x_2],[y_0:y_1,y_2],[z_0:z_1,z_2])$ on $\pp^2 \times \pp^2 \times \pp^2$ such that $X$ can be given by the following set of equations 
\[
\begin{cases}
    x_0y_0 + x_1y_1+x_2y_2=0\\
    y_0z_0+ y_1z_1 + y_2z_2 =0\\
    x_0z_1+ x_1z_0 + x_1z_2 - x_2z_1 -2x_2z_2 =0
\end{cases}
\]
with $\aut(X)\cong \mathbb{G}_a \rtimes S_3$. Here the subgroup $S_3$ is generated by the involutions 
\begin{align*}
\tau_{x,y}: ([x_0:x_1:x_2],&[y_0:y_1:y_2],[z_0:z_1:z_2]) \mapsto 
([y_0+2y_1+\\
&y_2:2y_0:y_0+y_2],[x_1:x_0-x_2:2x_2-x_1],[z_0:z_1:-z_2]),\\
\tau_{x,z}: .([x_0:x_1:x_2]&,[y_0:y_1:y_2],[z_0:z_1:z_2]) \mapsto \\
&([z_0:z_1:-z_2],[y_0:y_1:-y_2],[x_0:x_1:-x_2])
\end{align*} 
such that its action on $\pic(\pp^2\times\pp^2\times\pp^2)$ coincides with the permutation action of $S_3$. From Proposition \ref{thm:lefschetz}, we know $\pic(X)\cong \pic(\pp^2\times\pp^2\times\pp^2)$. Thus, we have $\autp(X,G)= S_3$ where $G$ is generated by $\tau_{x,y},\tau_{x,z}$.  
\end{proof}

\begin{lemma}  \label{autP:4.7}
There exists an $X$ in \textnumero 4.7 such that $\autp(X)=\wg_X \cong \zz/2\zz$. 
\end{lemma}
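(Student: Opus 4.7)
The plan is to realize $X$ as the blow-up of a $\sigma$-invariant smooth member $W$ of \textnumero 2.32 along a $\sigma$-invariant center $Z = Z_1 \sqcup Z_2$ whose two components are interchanged by $\sigma$, then lift $\sigma$ to an automorphism of $X$ acting nontrivially on $\pic(X)$.

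Following the Mori-Mukai description in \cite[Table 4]{MM81classtable}, a smooth member of \textnumero 4.7 can be written as the blow-up $\pi : X \to W$ along a disjoint union $Z$ of two curves, consistent with the Picard rank equation $\rho(X) = \rho(W) + 2 = 4$. I would take $W$ as in Lemma \ref{autP:2-32}, cut out by $x_0 y_0 + x_1 y_1 + x_2 y_2 = 0$ in $\pp^2 \times \pp^2$, with the involution $\sigma$ swapping the two $\pp^2$-factors. Setting $H_i = (\operatorname{pr}_i^* \mathcal{O}_{\pp^2}(1))|_W$ and letting $E_1, E_2$ denote the exceptional divisors over the two components of $Z$, I obtain $\pic(X) = \zz[\pi^*H_1] \oplus \zz[\pi^*H_2] \oplus \zz[E_1] \oplus \zz[E_2]$, on which $\sigma$ can at best act as the simultaneous swap $H_1 \leftrightarrow H_2$, $E_1 \leftrightarrow E_2$.

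To make the center $\sigma$-symmetric, I would pick $Z_1$ of asymmetric bidegree (say $(1,0)$) and set $Z_2 := \sigma(Z_1)$, which then has bidegree $(0,1)$. A concrete choice is
\[
Z_1 = \{([x_0:x_1:x_2], [1:1:1]) \in W : x_0 + x_1 + x_2 = 0\}, \qquad Z_2 = \sigma(Z_1),
\]
each isomorphic to $\pp^1$ and hence smooth. For disjointness, any point of $Z_1 \cap Z_2$ must have both coordinates equal to $[1:1:1]$, but then $x_0 y_0 + x_1 y_1 + x_2 y_2 = 3 \neq 0$, contradicting membership in $W$. Should the MM description force a center of different bidegree, the same approach with a generic $\sigma$-symmetric defining ideal would work, with smoothness and irreducibility verified through Remark \ref{rmk:magmause}.

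With this setup, Lemma \ref{Lemma:autblowup} yields a lift $\tilde\sigma \in \aut(X)$ of $\sigma \in \aut(W; Z)$. The induced action of $\tilde\sigma$ on $\pic(X)$ swaps $\pi^*H_1 \leftrightarrow \pi^*H_2$ (from the action of $\sigma$ on $\pic(W)$ established in Lemma \ref{autP:2-32}) and swaps $E_1 \leftrightarrow E_2$ (since $\sigma$ interchanges $Z_1$ and $Z_2$), so $\autp(X, \langle \tilde\sigma \rangle) \cong \zz/2\zz$. Combined with the upper bound $\autp(X) \subseteq \wg_X = \zz/2\zz$ from Proposition \ref{imagealphainweylgroup}, this produces the equality $\autp(X) = \wg_X$. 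The only step requiring any care is the simultaneous verification that $Z_1 \sqcup Z_2$ is smooth, disjoint, and $\sigma$-invariant; the symmetric construction above handles all three requirements at once, so I do not anticipate a substantive obstacle.
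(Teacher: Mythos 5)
Your proposal is correct and follows essentially the same route as the paper: the paper also takes $W$ and $\sigma$ from Lemma \ref{autP:2-32}, blows up a $\sigma$-invariant disjoint pair of ruling curves of bidegrees $(1,0)$ and $(0,1)$ (there $C_1=\{y_0=y_1=0\}$, $C_2=\{x_0=x_1=0\}$, versus your fibers over $[1:1:1]$), and lifts $\sigma$ via Lemma \ref{Lemma:autblowup} to get the swap of $\pi^*H_1,\pi^*H_2$ and of $E_1,E_2$. The only difference is the cosmetic choice of the two lines, which are projectively equivalent to the paper's.
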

\begin{proof} 
If $X$ is in family \textnumero 4.7, the center of the blow-up is a curve $C$ which is given by the disjoint union $C_1\sqcup\; C_2$ of curves $C_1$ and $C_2$ of degrees $(0,1)$ and $(1,0)$, respectively. Recall the Fano threefold $W$ from \eqref{eqn:W}, defined by $x_0y_0 + x_1y_1 + x_2y_2=0$, and the involution $\sigma \in \aut(W)$
\[
\sigma : ([x_0:x_1:x_2],[y_0:y_1:y_2]) \mapsto ([y_0:y_1:y_2],[x_0:x_1:x_2]),
\]
as in \eqref{eqn:involutionforW}.
Assume that
\begin{align*}
    C_1:= \{y_0 = y_1 =0\},    \hspace{0.5cm} C_2:= \{x_0 = x_1 =0\}.
\end{align*}
Then $\sigma$ also swaps the curves $C_1$ and $C_2$ but keeps $C_1\sqcup C_2$ invariant. Note that $C_1 \sqcup C_2$ is smooth and irreducible on $W$ since each curve $C_i$ is, and $\sigma \in \aut(W;C_1\sqcup C_2)$. Let $E_i$ denote the exceptional divisor corresponding to $C_i$, for $i=1,2$. Then $\pic(X) \cong \zz[\pi^*H_1]\oplus \zz[\pi^*H_2]\oplus \zz[E_1]\oplus \zz[E_2]$. 

Now if $\tilde{\sigma} \in \aut(X,E_1\cup E_2)$ is a lift of $\sigma$, then it acts on $\pic(X)$ by swapping $E_1$ with $E_2$ and $\pi^*H_1$ with $\pi^*H_2$. This shows that $\autp(X,\langle \tilde{\sigma}\rangle)=\mathbb{Z}/2\mathbb{Z}$.
\end{proof}

\subsection{Blow-up of a Fano threefold from \textnumero 3.19} 
We recall from Lemma \ref{autP:3-19} that a Fano threefold $Y$ in family \textnumero 3.19 is the blow-up of a quadric threefold $Q$ at two points $p$ and $q$. Let $\pi_1 : Y \to Q$ be the blow-up map. If $X$ is a Fano threefold in family \textnumero 4.4, then Mori-Mukai describe $X$ as the blow-up of $Y$ along the strict transform of a conic $C\subset Q$ passing through the two points. Let $\pi_2 : X \to Y$ denote this blow-up map. 

Set $H:=\pi_2^*\pi_1^*\mathcal{O}_Q(1)$. Then $\pic(X) = \zz[H]\oplus \zz[E_1]\oplus \zz[E_2]\oplus \zz[F]$, where $E_1$ and $E_2$ are the pullbacks to $X$ of the exceptional divisors of the points $p,q \in Q$, and $F$ is the exceptional divisor over the strict transform of $C$.

From Lemma \ref{autP:3-19}, we recall the smooth quadric $Q: x_0x_1 + x_2^2 + x_3^2 + x_4^2 =0 $, points $p=[1:0:0:0:0]$ and $ q=[0:1:0:0:0]$ on $Q$, and $\sigma\in \operatorname{Aut}(Q;{p,q})$ 
\[
\sigma: [x_0:x_1:x_2:x_3:x_4] \mapsto [x_1:x_0:x_2:x_3:x_4].
\] 
\begin{lemma}  \label{autP:4.4}
There exists an $X$ in \textnumero 4.4 such that $\autp(X)=\wg_X \cong \zz/2\zz$. 
\end{lemma}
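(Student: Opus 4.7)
My plan is to exhibit a $\sigma$-invariant smooth conic $C \subset Q$ passing through both $p$ and $q$, lift $\sigma$ first to an automorphism of $Y$ and then to $X$ via two applications of Lemma \ref{Lemma:autblowup}, and verify that the resulting involution of $\pic(X)$ is nontrivial. Since Proposition \ref{imagealphainweylgroup} together with Table \ref{weylfano} gives the upper bound $\autp(X) \subseteq \wg_X \cong \zz/2\zz$, exhibiting any nontrivial element suffices.

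For the conic I would try the section of $Q$ by the $\sigma$-invariant $2$-plane $\Pi := \{x_3 = x_4 = 0\} \subset \pp^4$. Inside $\Pi \cong \pp^2_{x_0,x_1,x_2}$ the quadric restricts to $x_0 x_1 + x_2^2 = 0$, which defines a smooth conic $C$ that visibly contains $p = [1{:}0{:}0{:}0{:}0]$ and $q = [0{:}1{:}0{:}0{:}0]$ and is $\sigma$-invariant, since both $\Pi$ and the defining equation of $Q$ are preserved by the swap $x_0 \leftrightarrow x_1$.

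Because $\sigma \in \aut(Q;\{p,q\})$, Lemma \ref{Lemma:autblowup} gives a lift $\sigma_Y \in \aut(Y)$ which swaps the exceptional divisors over $p$ and $q$. The strict transform $\widetilde{C} \subset Y$ is preserved by $\sigma_Y$ and is smooth: each blow-up center is a single smooth point of $C$, so $\widetilde{C} \cong C \cong \pp^1$ and meets each exceptional $\pp^2$ transversally in one point. A second application of Lemma \ref{Lemma:autblowup} then produces $\tilde{\sigma} \in \aut(X)$. On $\pic(X) = \zz[H] \oplus \zz[E_1] \oplus \zz[E_2] \oplus \zz[F]$, the involution $\tilde{\sigma}$ fixes $H$ (being induced from a linear automorphism of $\pp^4$) and fixes $F$ (as $\widetilde{C}$ is preserved), while swapping $E_1$ and $E_2$. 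Hence $\autp(X,\langle \tilde{\sigma}\rangle) \cong \zz/2\zz = \wg_X$, as desired.

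The main thing to check carefully is that the conic $C$ can be lifted through \emph{both} blow-ups while remaining smooth; but since $C$ is smooth of dimension $1$ and the first blow-up centers are points on it in a threefold, this is automatic and no computer algebra input is required. The rest of the argument is a direct verification of how $\tilde{\sigma}$ acts on the four Picard generators.
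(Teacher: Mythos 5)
Your proof is correct and follows essentially the same route as the paper: both choose a $\sigma$-invariant plane section of $Q$ through $p$ and $q$ (the paper uses $\{x_2=x_3=0\}$, giving the conic $x_0x_1+x_4^2=0$, while you use $\{x_3=x_4=0\}$ — an equally valid choice), lift $\sigma$ through the two blow-ups via Lemma \ref{Lemma:autblowup}, and observe that the resulting involution swaps $E_1$ and $E_2$ while fixing $H$ and $F$. Your added remarks on the smoothness of the strict transform and the explicit action on all four Picard generators are fine but not needed beyond what the paper records.
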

\begin{proof}
Suppose that $C$ is given by 
\[
C:= \{x_2 = x_3 =0,\; x_0x_1 + x_2^2 + x_3^2 + x_4^2 =0\} \subset Q,
\]
it passes through the points $p$ and $q$. The curve is smooth and irreducible (Remark \ref{rmk:magmause}). Observe that since $\sigma$ swaps points $p$ and $q$, its lift to $\aut(Y)$ swaps the exceptional divisors $\pi_1^{-1}(p)$ and $\pi_1^{-1}(q)$ and thus, induces an involution on $\pic(Y)$. Since $C$ is also $\sigma$-invariant, this further lifts to a $\tilde{\sigma} \in \aut(X;F)$ such that the action on $\pic(X)$ is by swapping $E_1$ and $E_2$. This shows that $\autp(X,\langle \tilde{\sigma}\rangle) =\mathbb{Z}/2\mathbb{Z}$. 
\end{proof}

\subsection{Blow-up of a Fano threefold from \textnumero 2.29}  \label{im(alpha)for5-1}
Let $X$ denote a smooth Fano threefold in the family \textnumero 5.1. We recall the Mori-Mukai construction of $X$. Let $C$ be a smooth conic on a quadric $Q \subset \pp^4$, and let $p_1, p_2, p_3\in C$ be three distinct points. Then $X$ can be constructed as follows: first blow-up $Q$ along $C$, obtaining 
$\pi_1 : Y \to Q$, where $Y$ belongs to family \textnumero 2.29. 
Next, blow up $Y$ along the three lines $e_1, e_2, e_3$, 
where each $e_i$ is the exceptional line on $Y$ contracted by $\pi_1$ to the point $p_i \in Q$.

By \cite[Corollary 2.6]{MM86}, this is equivalent to the following construction: Let $f : \widetilde{Y} \to Q$ be the blow-up of $Q$ at the points $p_1, p_2, p_3$, 
and let $\widetilde{C}$ denote the strict transform of the conic $C$ under $f$. 
Then $X$ is obtained as the blow-up $\pi : X \to \widetilde{Y}$ along $\widetilde{C}$.
 We have
\[
\pic(X) = \pi^*\pic(\widetilde{Y}) \oplus \zz[E_7], \hspace{0.2cm} \pic(\widetilde{Y}) = \zz[f^*H] \oplus \zz[E_1]\oplus\zz[E_2]\oplus\zz[E_3]. 
\]
where $E_i$ is the exceptional divisor on $\widetilde{Y}$ that $f$ contracts to $p_i$, $1\leq i \leq 3$ and $E_7$ is the exceptional divisor of the blow-up $\pi$. 

\begin{lemma}  \label{autP:5.1}
There exists an $X$ in \textnumero 5.1 such that $\autp(X)=\wg_X \cong S_3$. 
\end{lemma}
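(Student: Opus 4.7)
The plan is to exhibit an explicit smooth $X$ in \textnumero 5.1 together with a subgroup $G \cong S_3 \subseteq \aut(X)$ whose induced action on $\pic(X)$ is the full Weyl group. Since $\autp(X) \subseteq \wg_X = S_3$ by Proposition \ref{imagealphainweylgroup}, it suffices to produce any such $G$ whose action permutes the three exceptional divisors $E_1, E_2, E_3 \subseteq \widetilde{Y}$ faithfully. Using the Mori-Mukai construction via $f : \widetilde{Y} \to Q$ and $\pi : X \to \widetilde{Y}$ recalled above, this reduces by two applications of Lemma \ref{Lemma:autblowup} to finding a smooth quadric $Q \subset \mathbb{P}^4$, a smooth conic $C \subset Q$, and three distinct points $p_1, p_2, p_3 \in C$, all invariant under a subgroup $S_3 \subset \aut(\mathbb{P}^4)$ that permutes the three points.

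For the construction, I would let $S_3$ act on $\mathbb{P}^4$ by permuting the coordinates $x_0, x_1, x_2$ and fixing $x_3, x_4$. Then take
\[
Q : \; x_0 x_1 + x_1 x_2 + x_0 x_2 + x_3 x_4 = 0, \qquad C := Q \cap \{x_3 = x_4 = 0\},
\]
and set $p_i$ to be the coordinate point supported on $x_{i-1}$ for $i = 1, 2, 3$. A routine Jacobian computation (or Remark \ref{rmk:magmause}) shows that $Q$ is smooth and that $C = \{x_0 x_1 + x_1 x_2 + x_0 x_2 = 0\}$ in the plane $\{x_3 = x_4 = 0\}$ is a smooth conic on $Q$. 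The three points are distinct, all lie on $C$, and are manifestly permuted by the $S_3$-action, while $Q$ and $C$ are $S_3$-invariant.

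Applying Lemma \ref{Lemma:autblowup} to $f : \widetilde{Y} \to Q$, the $S_3$-action lifts to $\widetilde{Y}$ and permutes $E_1, E_2, E_3$. Since $C$ is $S_3$-invariant, so is its strict transform $\widetilde{C} \subseteq \widetilde{Y}$, and a second application of Lemma \ref{Lemma:autblowup} yields a subgroup $G \cong S_3 \subseteq \aut(X; E_7)$ lifting this action. Under the identification $\pic(X) = \mathbb{Z}[\pi^* f^* H] \oplus \mathbb{Z}[\pi^* E_1] \oplus \mathbb{Z}[\pi^* E_2] \oplus \mathbb{Z}[\pi^* E_3] \oplus \mathbb{Z}[E_7]$, the group $G$ fixes $\pi^* f^* H$ and $E_7$ while permuting the $\pi^* E_i$ in the natural $S_3$ action on three elements, so the action is faithful and $\autp(X, G) = S_3 = \wg_X$. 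The only thing to verify in the argument is the smoothness of $Q$, the smoothness of $C$, and that $p_1, p_2, p_3$ all lie on $C$; none of these present any real obstacle, as they are immediate from the defining equations.
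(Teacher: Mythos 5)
Your proposal is correct and follows essentially the same strategy as the paper: choose an explicit $S_3$-invariant smooth quadric $Q$, smooth conic $C$, and three distinct points of $C$ permuted by coordinate transpositions, then lift through the two blow-ups via Lemma \ref{Lemma:autblowup} so that the lifted $S_3$ permutes $E_1,E_2,E_3$ and hence realizes all of $\wg_X$. The only difference is cosmetic — the paper has $S_3$ permute $x_2,x_3,x_4$ with points involving cube roots of unity, while you permute $x_0,x_1,x_2$ and use coordinate points, which is if anything slightly cleaner.
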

\begin{proof}
Let $Q:=\{ x_0x_1+x_2^2+x_3^2+x_4^2=0\}$ and $C := \{ x_0 = x_1 =0, x_2^2+x_3^2+x_4^2=0\}$. Let 
\[
p_1 = [0:0:1:\omega:\omega^2],\; p_2 = [0:0:\omega:1:\omega^2], \; p_3 = [0:0:1:\omega^2:\omega] \; \in C,
\]
where $\omega$ is a primitive third root of unity. Consider the involutions of $\pp^4$
\begin{align*}
\sigma &: [x_0:x_1:x_2:x_3:x_4] \mapsto [x_0:x_1:x_3:x_2:x_4] \\
\tau &: [x_0:x_1:x_2:x_3:x_4] \mapsto [x_0:x_1:x_4:x_3:x_2].
\end{align*}
such that $\sigma, \tau \in \aut(Q;C)$ and $\langle\sigma,\tau\rangle = S_3$. 

Note that $\sigma$ swaps $p_1$ and $p_2$, and lifts to an automorphism of $\widetilde{Y}$ that swaps the respective exceptional divisors, $E_1$ and $E_2$. Hence $\sigma$ induces an involution on $\pic(\widetilde{Y})$ and consequently on $\pic(X)$. We denote a lift of $\sigma$ to $\aut(X;E_7)$ by $\widetilde{\sigma}$. Similarly, $\tau$ swaps $p_1$ and $p_3$ and lifts to a $\widetilde{\tau} \in \aut(X;E_7)$ that induces an involution on $\pic(X)$. This implies that 
\[
\autp(X,\mathcal{G}) = S_3
\] 
where $\mathcal{G}:=\langle \widetilde{\sigma}, \widetilde{\tau}\rangle \cong S_3$ is a subgroup of $\aut(X)$.
\end{proof}

\subsection{Toric Fano threefolds}  \label{subsec:toric}
We consider families 
\begin{center}
    \textnumero 3.31, \textnumero 4.10, \textnumero 4.12, \textnumero 5.2, \textnumero 5.3.
\end{center}
Each family has a unique smooth Fano threefold that has a toric description. See also \cite{batyreveng} for the classification of smooth Fano toric threefolds.
We can use the \emph{NormalToricVarieties} module in Macaulay2 to get a fan for each toric Fano threefold (\cite{NormalToricVarietiesSource}). 
For families  \textnumero 4.12, \textnumero 5.2, we use the grading given in
\cite[\S\S 97, 99]{quantum}. Note that the Mori-Mukai family \textnumero 4.12 is labeled as \textnumero 4.13 in \cite{quantum}.

Recall from Table \ref{weylfano}, $\wg_X =\zz/2\zz$ for all these families except for \textnumero 5.3. If $X$ is in \textnumero 5.3, we have $\wg_X=\zz/2\zz \times S_3$. It is possible to realize the upper bound $\autp(X)=\wg_X$ in all these cases using the grading of Cox coordinates as follows. 

\begin{lemma}  \label{autp:3-31,4-10,4-12,5-2}
    Let $X$ be a smooth Fano threefold in \textnumero 3.31, \textnumero 4.10, \textnumero 4.12 or \textnumero 5.2. Then $\autp(X)=\zz/2\zz$. 
\end{lemma}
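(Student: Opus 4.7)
The plan is to handle all four families uniformly via their toric structure. For each $X$ one has a fan $\Sigma_X \subset N_\zz \cong \zz^3$, available from the \emph{NormalToricVarieties} package of Macaulay2 \cite{NormalToricVarietiesSource} or from the Cox-coordinate gradings in \cite{quantum, batyreveng}. Writing $\rho_1,\dots,\rho_n$ for the primitive ray generators and $D_1,\dots,D_n$ for the corresponding torus-invariant prime divisors, the Picard group is described by the standard exact sequence
\[
0 \longrightarrow M \longrightarrow \bigoplus_{i=1}^n \zz\, D_i \longrightarrow \pic(X) \longrightarrow 0,
\]
in which a character $m \in M$ maps to $\sum_i \langle \rho_i, m\rangle D_i$. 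This completely determines the Picard classes of the $D_i$ once the rays are listed.

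The key step is to exhibit, for each of the four fans, an order-two lattice automorphism $\varphi \in GL(N_\zz)$ that preserves $\Sigma_X$ (equivalently, permutes the rays $\rho_1,\dots,\rho_n$ in a way compatible with the collection of maximal cones). Any such $\varphi$ lifts canonically to a torus-equivariant involution $\tilde\sigma \in \aut(X)$ whose induced action on $\pic(X)$ is the corresponding permutation $[D_i] \mapsto [D_{\pi_\varphi(i)}]$. I will select $\varphi$ so that at least one swapped pair of divisors has distinct Picard classes, which can be read directly from the grading matrix above; this guarantees that $\tilde\sigma$ acts nontrivially on $\pic(X)$. Combined with the upper bound $\autp(X) \subseteq \wg_X = \zz/2\zz$ coming from Proposition \ref{imagealphainweylgroup} and Table \ref{weylfano}, a single nontrivial involution forces $\autp(X,\langle \tilde\sigma\rangle) = \zz/2\zz$, as required.

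The main obstacle is bookkeeping rather than conceptual: for each of the four fans one must write down an explicit integer matrix $\varphi$ realizing a ray-permutation compatible with the cone structure, and then confirm via the grading sequence that the induced permutation on $\pic(X)$ is nontrivial. These are short linear-algebra verifications once the fans (or equivalently the Cox gradings) are tabulated, and they are carried out family by family; in particular, for \textnumero 4.12 and \textnumero 5.2 one uses the grading recorded in \cite[\S\S 97, 99]{quantum}, as noted just before the statement. A convenient conceptual shortcut in most of the four cases is that the required $\varphi$ can be taken to swap two Cox coordinates of different Picard degree via an obvious lattice symmetry, after which the nontriviality on $\pic(X)$ is immediate.
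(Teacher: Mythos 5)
Your proposal is correct and follows essentially the same route as the paper: the paper exhibits, for each of the four families, an explicit involutive permutation of the Cox coordinates that preserves the set of maximal cones and induces a nontrivial matrix on $\pic(X)$, which is the Cox-ring reformulation of your fan-automorphism $\varphi$, and then combines this with the Weyl-group upper bound. The only difference is that the paper actually tabulates the gradings and the matrices $M_\sigma$ family by family, whereas you defer that bookkeeping; that explicit verification is the substance of the proof and still needs to be carried out.
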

\begin{proof} 
For each family, we list a pair $(\sigma, M)$, where $\sigma$ is a permutation on Cox coordinates of $X$ and $M_{\sigma}$ is an automorphism on the Picard lattice induced by $\sigma$. Then to see if $\sigma$ indeed lifts to an automorphism in $\aut(X)$, it suffices to check that $\sigma$ preserves the irrelevant ideal of Cox ring, or equivalently, the set of maximal cones of $X$ \cite{Cox95}. 

If $X$ is in \textnumero 3.31, the weighted grading is
\begin{center}
\begin{tabular}{c c c c c c }
   $s_0$ & $s_1$ & $t_0$ & $t_1$ & $x$ & $y$ \\
    \hline
   1  & 1 & 0& 0 & -1 & 0  \\
   0  & 0 & 1 & 1 & -1  & 0 \\
   0  & 0 & 0 & 0 & 1 & 1 \\
\end{tabular}
\end{center}

Let $\pic(X) = \zz[[D_{s_0}] , [D_{t_0}], [D_y]]$ and $\sigma =  (13)(24)$, then 
\[
M_{\sigma}= \begin{pmatrix}
           0 & 1 & 0\\
           1 & 0 & 0 \\
           0  & 0 & 1
\end{pmatrix}. \]

If $X$ is in \textnumero 4.10, the weighted grading is
      \begin{center}
  \begin{tabular}{c c c c c c c }
  $u$ & $v$ & $s_0$  & $s_1$  & $t_0$ & $t_1$ & $w$\\
   \hline
1 & -1  & 0 & 0 & 0  & 0  &1  \\
0 & 0  & 0 & 0 & 1 &  -1  & 1  \\
0 & 0  & 1 & 1 & 0 &  0  &0  \\
0 & 1  & 0 & 0 & 0 &  1  & -1
   \end{tabular}
 \end{center}
 
Let $\pic(X) = \zz[ [D_u]$, $[D_v], [D_{s_0}], [D_{t_0}] ]$ and $\sigma = (15)(26) $, then 

\[
M_{\sigma}= \begin{pmatrix}
       0 & 0 & 0 & 1 \\ 
         1 & 1 & 0 & -1 \\ 
           0 & 0 & 1 & 0 \\ 
             1 & 0 & 0 & 0 
 \end{pmatrix}. \] 

If $X$ is in \textnumero 4.12, the weighted grading is 
\begin{center}
\begin{tabular}{c c c c c c c}
   $s_0$ & $s_1$ & $x$ & $y_2$ & $y_3$ & $u$ & $v$ \\
    \hline
   1  & 1 & -1& 0 & 0 & 0 & 0  \\
   0  & 0 & -1 & 0 & 0 & 1 & 1 \\
   0  & 0 & 1 & 1 & 0 & -1 & 0   \\
   0  & 0 & 1 & 0 & 1 & 0 &  -1  \\
\end{tabular}
\end{center}

Let $\pic(X) = \zz[[D_{s_0}], [D_x] , [D_{y_2}], [D_{y_3}]]$ and $\sigma =  (45)$, we have
\[ M_{\sigma} = \begin{pmatrix}
      1 & 0 & 0 & 0\\
       0 & 1 & 0 & 0\\
        0 & 0 & 0 & 1\\
         0 & 0 & 1 & 0
\end{pmatrix}. \]

Let us now assume that $X$ is in \textnumero 5.2 with weighted grading
\begin{center}
\begin{tabular}{c c c c c c c c}
   $s_0$ & $s_1$ & $t_2$ & $t_3$ & $x$ & $y$ & $u$ & $v$\\
   \hline
   1  & 1 & 0& 0 & -1 & 0 & 0 & 0 \\
   0  & 0 & 1 & 1 & 0  & -1 & 0 & 0 \\
   0  & 0 & 0 & 1 & 1 & 0 & -1  & 0 \\
   0  & 0 & 1 & 0 & 1 & 0 &  0  & -1  \\
   0  & 0 & -1 & -1 & 1 & 0 & 1 & 1
\end{tabular}
\end{center}
If $\pic(X)= \zz[[D_{s_0}], [D_{t_2}], [D_{t_3}], [D_x], [D_y]]$ and $\sigma = (34)$, then
\[ M_{\sigma}= \begin{pmatrix}
     1 & 0 & 0 & 0 & 0\\
       0 & 0 & 1 & 0 & 0\\
         0 & 1 & 0 & 0 & 0\\
           0 & 0 & 0 & 1 & 0\\
             0 & 0 & 0 & 0 & 1\\
\end{pmatrix}.\]
In each case, the permutation $\sigma$ sends a maximal cone to a maximal cone. So it lifts to an automorphism $\tilde{\sigma}$ of $X$, and we get $\autp(X,\langle \tilde{\sigma}\rangle)=\zz/2\zz$. 
\end{proof}

\begin{lemma}  \label{autp:5-3}
    Let $X$ be a smooth Fano threefold in \textnumero 5.3, then $\autp(X)=\wg_X \cong \zz/2\zz \times S_3.$
\end{lemma}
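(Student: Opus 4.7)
The plan is to mirror the strategy used in Lemma~\ref{autp:3-31,4-10,4-12,5-2}. First I would extract an explicit toric description of the unique smooth Fano threefold $X$ in family \textnumero~5.3 via the \emph{NormalToricVarieties} package in Macaulay2 (\cite{NormalToricVarietiesSource}), producing the Cox coordinates together with their weighted grading by $\pic(X)\cong\zz^5$, as well as the list of maximal cones of the fan. The threefold $X$ is a toric variety obtained from $\pp^1\times\pp^1\times\pp^1$ by a sequence of toric blowups, so one expects its fan to exhibit a $\zz/2\zz$-symmetry (from a $\pp^1$-factor swap) independent from an $S_3$-symmetry (permuting three rays arising from a symmetric blowup configuration).

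Next I would identify permutations of Cox coordinates realizing the whole group $\zz/2\zz\times S_3$. Concretely, I would look for an involution $\sigma$ generating the $\zz/2\zz$ factor together with two commuting-with-$\sigma$ permutations $\tau_1$, $\tau_2$ of orders $2$ and $3$ generating the $S_3$ factor. For each of $\sigma,\tau_1,\tau_2$, I would compute the induced matrix on $\pic(X)$ by tracking the action on the weighted grading, in the same tabular style as Lemma~\ref{autp:3-31,4-10,4-12,5-2}. These matrices should together generate a subgroup of $\aut(\pic(X))$ isomorphic to $\zz/2\zz \times S_3$, saturating the upper bound supplied by Proposition~\ref{imagealphainweylgroup} together with Table~\ref{weylfano}.

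Finally, by \cite{Cox95}, a permutation of Cox coordinates lifts to an automorphism of the toric variety precisely when it permutes the maximal cones of the fan (equivalently, preserves the irrelevant ideal of the Cox ring). I would verify this combinatorial condition for $\sigma$, $\tau_1$, and $\tau_2$ directly from the fan data. Writing $\widetilde{\sigma},\widetilde{\tau}_1,\widetilde{\tau}_2\in\aut(X)$ for the resulting lifts and setting $\mathcal{G}:=\langle \widetilde{\sigma},\widetilde{\tau}_1,\widetilde{\tau}_2\rangle$, the computed matrices show that $\autp(X,\mathcal{G})\cong \zz/2\zz\times S_3$, and the reverse inclusion $\autp(X)\subseteq \wg_X$ from Proposition~\ref{imagealphainweylgroup} then yields $\autp(X)=\wg_X\cong \zz/2\zz\times S_3$.

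The main obstacle is the combinatorial bookkeeping on the fan of \textnumero~5.3: the fan has eight rays, so one must locate the correct involution and the correct triple of rays permuted by $S_3$, and confirm that they do commute in the relevant way and act correctly on $\pic(X)$. Once the symmetries are identified, checking that each permutation takes maximal cones to maximal cones, and that the induced matrices on the Picard lattice generate a group isomorphic to $\zz/2\zz\times S_3$, is entirely mechanical.
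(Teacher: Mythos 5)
Your proposal follows essentially the same route as the paper: extract the toric data (Cox coordinates, grading, maximal cones) for the unique smooth member of \textnumero~5.3, exhibit permutations of the Cox coordinates generating $\zz/2\zz\times S_3$, record their induced matrices on $\pic(X)$, and invoke the preservation of the maximal cones to lift them to $\aut(X)$, with Proposition~\ref{imagealphainweylgroup} supplying the matching upper bound. The paper's proof is exactly this, with the explicit choices $g=(14)(25)(36)$, $\sigma=(12)(36)(45)$, and $\tau=(153)(264)$.
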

\begin{proof}
   If $X$ is a member of \textnumero 5.3, the weighted grading of Cox coordinates is
  \begin{center}
  \begin{tabular}{c c c c c c c c }
  $u$  & $v$  & $w$  & $x$ & $y_0$  & $y_1$  & $s_0$  & $s_1$\\
  \hline
1 & -1   &  1  &  0  & 0 &  0 &  0 & 0 \\
0  &  0  & 0  & 1  & -1  & 1 &  0 & 0\\
0  &  0  & 1   & -1 & 1 &  0  & 0  & 0 \\
0  & 0  & 0  & 0 &  0  & 0  &  1  & 1 \\ 
0   & 1   & -1  & 1 & 0  & 0  & 0 & 0
 \end{tabular}
 \end{center}

Assume that $\pic(X)= \zz[[D_u], [D_v], [D_w], [D_{y_1}],[D_{s_0}]]$. 
Let $g:= (14)(25)(36)$. It induces 
\[
M_{g} := \begin{pmatrix}
     1 & 0 & -1 & 1 & 0\\
    0 & 1 & 1 & -1 & 0\\
    0 & 0 & 0 & 1 & 0\\
    0 & 0 & 1 & 0 & 0\\
    0 & 0 & 0 & 0 & 1
\end{pmatrix}
\]
The permutation $\sigma:= (12)(36)(45)$ on Cox coordinates induces an action on $\pic(X)$ via
\[
M_{\sigma}:= \begin{pmatrix}
    0 & 1 & 0& 0 & 0\\
        1 & 0 & 0& 0 & 0\\
            0 & 0 & 0& 1 & 0\\
                0 & 0 & 1& 0 & 0\\
                    0 & 0 & 0& 0 & 1\\
\end{pmatrix}. 
\]
Now consider the automorphism $\tau := (153)(264)$ acts on $\pic(X)$ via the matrix 
\[
M_{\tau}:= \begin{pmatrix}
    0 & 1 & 1 & -1 & 0\\
    0 & 0 & 0 & 1 & 0\\
    1 & 0 & 0 & 0 & 0\\
    1 & 0 & -1 & 1 & 0\\
    0 & 0 & 0 & 0 & 1
\end{pmatrix}
\]
of order $3$. Note that $\langle \tau , \sigma\rangle \cong S_3$ and $g\not\in \langle \tau , \sigma\rangle $. Let $\mathcal{G}:= \langle g\rangle \times \langle \sigma, \tau\rangle$. The set of maximal cones is preserved under $\mathcal{G}$, so it lifts to a group $\widetilde{\mathcal{G}}\subset \aut(X)$ and we get 
$\autp(X,\widetilde{\mathcal{G}})=\zz/2\zz \times S_3$. 
\end{proof}

\subsection{Product of \texorpdfstring{$\mathbb{P}^1$}{P1} and a del Pezzo surface}   \label{autP:P1timesdP}
Let $X$ be a smooth Fano threefold of the form $\mathbb{P}^1 \times S_d$, where $S_d$ is a del Pezzo surface of degree $d$, $1 \leq d \leq 9$. Then $X$ is a member of one of the following $10$ deformation families 
\begin{center}
\textnumero 2.34,\; \textnumero 3.27, \; \textnumero 3.28, \; \textnumero 4.10, \; \textnumero 5.3, \; \textnumero 6.1,\; \textnumero 7.1,\; \textnumero 8.1,\; \textnumero 9.1,\; \textnumero 10.1.
\end{center}
If $X$ is in family \textnumero $2.34$, we have $d=9$ and it is isomorphic to $\mathbb{P}^1 \times \mathbb{P}^2$. If $X$ is in family \textnumero $3.28$, $d=8$ and $X$ is isomorphic to $\mathbb{P}^1 \times \operatorname{Bl}_p\mathbb{P}^2$. We forego the discussion of these two families since each has $WG_X=0$ from \cite{Mat23}.

If $X$ is in \textnumero $3.27$, we can see that $\autp(X,G)$ can be any subgroup of $S_3$ for a suitable choice of $G\subset \aut(X)$. 

\begin{example}  \label{autP:3.27}
\normalfont From \cite{MM81classtable}, a smooth Fano threefold in \textnumero 3.27 is isomorphic to $\pp^1 \times \pp^1\times \pp^1$. Let $\pic(\pp^1\times\pp^1\times\pp^1) = \zz[H_1]\oplus \zz[H_2]\oplus \zz[H_3]$, where $H_i := \operatorname{pr}_i^*\mathcal{O}_{\pp^1}(1)$ and $\operatorname{pr}_i : (\pp^1)^3 \to \pp^1$ are natural projections, for $1\leq i \leq 3$. 
Consider the automorphisms of  $\mathbb{P}^1_{x_0,x_1} \times \mathbb{P}^1_{y_0,y_1} \times \mathbb{P}^1_{z_0,z_1}$
\begin{align*}
    \sigma : ([x_0:x_1],[y_0:y_1],[z_0:z_1]) &\mapsto ([y_0:y_1],[z_0:z_1],[x_0:x_1])\\
    \tau : ([x_0:x_1],[y_0:y_1],[z_0:z_1]) &\mapsto ([y_0:y_1],[x_0:x_1],[z_0:z_1]
\end{align*}
They induce automorphisms of orders $3$ and $2$, respectively, on the Picard group. Finally, note that $S_3 \cong \langle \sigma,\tau\rangle$.
\end{example}

If $X$ is in \textnumero 4.10 or \textnumero 5.3, it has a toric description. See \S \ref{subsec:toric} for details. The remaining families of Fano threefolds have $\rho\geq 6$ and we have 
\[
\wg_X = W(R_{9-d}),
\]
where $d\leq 5$ is the degree of the del Pezzo surface and $R_{9-d}$ is the associated root system. The Weyl groups $W(R_{9-d})$ for $d\leq 4$ are much larger than those we considered earlier for Fano threefolds. The natural homomorphism 
\[
\aut(S_d) \rightarrow W(R_{9-d})
\]
is injective for del Pezzo surfaces with degree $d\leq 5$ (\cite[Lemma 6.2]{dolgaisko_planecremonagroups}). Moreover, the automorphism groups $\aut(S_d)$ have been classified in \cite[\S 6]{dolgaisko_planecremonagroups}. From the classification, one can compute $\autp(S_d)$, which is typically much smaller than $W(R_{9-d})$ for $d\leq 4$.

\section{First Group Cohomology of the Picard Group} \label{sec:H1}

Let $X$ denote a smooth Fano threefold, and let $G\subseteq \aut(X)$ be a finite group. Then $\pic(X)$ is a module over the the group ring $\zz[G]$. In this section, we compute the cohomology group $H^1(G,\pic(X))$. 

It is immediate that $H^1(G,\pic(X))=0$ for the families with $\autp(X)=0$, therefore we focus on the remaining families of Fano threefolds with nontrivial $\autp(X)$. First recall the following standard result about permutation modules.
\begin{lemma}\label{std-permutations-module-fact}
    Let $M\cong \zz^n$ be a lattice which has a basis permuted by a group $G$, then $H^1(H,M)=0$ for all subgroups $H$ of $G$.
\end{lemma}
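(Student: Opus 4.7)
The plan is to reduce to computing $H^1$ of finite stabilizer subgroups with trivial coefficients, via the orbit decomposition of the basis and Shapiro's Lemma. First I would observe that the basis $\{e_1, \ldots, e_n\}$ permuted by $G$ is also permuted by any subgroup $H \leq G$ under the restricted action, so it suffices to treat the case $H = G$ and then apply the result to each subgroup separately.

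Next, decompose the basis into $G$-orbits, $\{e_1, \ldots, e_n\} = \bigsqcup_{j=1}^r \mathcal{O}_j$, and let $M_j \subseteq M$ denote the sublattice spanned by $\mathcal{O}_j$. Then $M = \bigoplus_{j=1}^r M_j$ as a $G$-module, and each summand is a transitive permutation module, hence isomorphic to $\operatorname{Ind}_{G_{v_j}}^G \zz$ for any choice of representative $v_j \in \mathcal{O}_j$ with stabilizer $G_{v_j} \leq G$. Since group cohomology commutes with finite direct sums, this gives
\[
H^1(G, M) \cong \bigoplus_{j=1}^r H^1\bigl(G, \operatorname{Ind}_{G_{v_j}}^G \zz\bigr).
\]

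By Shapiro's Lemma, each summand on the right is isomorphic to $H^1(G_{v_j}, \zz)$, where $\zz$ carries the trivial action. Since in all applications $G$ is a finite subgroup of $\aut(X)$, each stabilizer $G_{v_j}$ is also finite, and therefore
\[
H^1(G_{v_j}, \zz) = \operatorname{Hom}(G_{v_j}, \zz) = 0,
\]
as $\zz$ is torsion-free. This yields $H^1(G, M) = 0$, and the same argument applied verbatim to any subgroup $H \leq G$ gives $H^1(H, M) = 0$. The argument is essentially mechanical once the orbit decomposition is identified as a direct sum of induced modules; the only substantive point is the finiteness of $G$, which is in force throughout \S \ref{sec:H1}.
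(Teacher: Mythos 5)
Your argument is correct and is exactly the standard proof that the paper implicitly invokes by labelling this a "standard result about permutation modules" without supplying a proof: decompose into orbits, identify each transitive permutation module $\zz[\mathcal{O}_j]$ with $\operatorname{Ind}_{G_{v_j}}^{G}\zz$ (which coincides with the coinduced module since the orbit, hence the index $[G:G_{v_j}]$, is finite), apply Shapiro's Lemma, and use $\operatorname{Hom}(G_{v_j},\zz)=0$ for a finite stabilizer. Your parenthetical caveat is also well taken: finiteness of $G$ is not in the statement of the lemma but is genuinely needed --- for $G=\zz$ acting trivially on $M=\zz$ one has $H^1(G,M)=\operatorname{Hom}(\zz,\zz)=\zz\neq 0$ --- and it is supplied by the standing hypothesis in \S 5 that $G\subseteq\aut(X)$ is finite, so every subgroup $H$ and every stabilizer is finite as required.
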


The Mori cone $\neo(X)$ is generated by finitely many extremal rays $R_i$, where $R_i := \mathbb{R}_{\geq 0}[\ell_i]$ for some (extremal) rational curve $\ell_i$, for each $1\leq i\leq n$ \cite{mori1980threefolds}. For each family of Fano threefolds, the rational curves $\ell_i$ are described in \cite{Mat95, Mat23}. The author also gives a table containing the intersection products of curves $\ell_i$ with a $\zz$-basis of $\pic(X)$. 
 
\begin{lemma}\label{main-H1-result}
   Let $X$ be a Fano threefold with Picard rank $\rho\leq 5$. Then there exists a basis of $\pic(X)$ that is permuted by $\autp(X)$. 
\end{lemma}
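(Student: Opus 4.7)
The plan is case-by-case verification across the families listed in Corollary \ref{im=0forsure}, since for families with $\autp(X)=0$ any $\mathbb{Z}$-basis is trivially permuted. For the majority of the remaining families the natural basis works without modification: when $X$ is a divisor or complete intersection in a product of projective spaces and $\autp(X)$ is realized by permutations of factors (\textnumero 2.6(a), 2.32, 3.3, 3.13, 3.17, 3.27, 4.1, 4.6), the basis of hyperplane pullbacks $\{H_i\}$ is permuted; when $X$ is a blow-up and $\autp(X)$ is generated by lifts of automorphisms permuting the blow-up centers, or by swaps of disjoint exceptional divisors (\textnumero 2.6(b), 3.1, 3.7, 3.10, 3.19, 3.20, 3.25, 4.3, 4.4, 4.7, 4.8, 4.13, 5.1), the basis $\{\pi^*H_i, E_j\}$ is permuted, since hyperplane pullbacks are either fixed or permuted among themselves and exceptional divisors are permuted. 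For the toric families \textnumero 3.31, \textnumero 4.12, and \textnumero 5.2 the matrices $M_\sigma$ in Lemma \ref{autp:3-31,4-10,4-12,5-2} are already permutation matrices in the chosen basis.

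The family \textnumero 2.21 requires a basis change. The involution from Lemma \ref{autP:2-21} acts by $\pi^*H \mapsto 2\pi^*H - E$ and $E \mapsto 3\pi^*H - 2E$, which is not a permutation of $\{\pi^*H, E\}$. Setting $v_1 := \pi^*H$ and $v_2 := 2\pi^*H - E$ gives a $\mathbb{Z}$-basis (the change-of-basis matrix has determinant $-1$) on which $\sigma$ acts as the transposition $v_1 \leftrightarrow v_2$, as one verifies directly.

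For the toric families \textnumero 4.10 and \textnumero 5.3 the matrices in Lemmas \ref{autp:3-31,4-10,4-12,5-2} and \ref{autp:5-3} are not permutation matrices in the bases given there. However, the generators of $\autp(X)$ lift from permutations of the Cox coordinates, which permute the set of classes of torus-invariant prime divisors. I would select a $\autp(X)$-stable subset of these classes forming a $\mathbb{Z}$-basis of $\pic(X)$, or equivalently construct a permutation basis directly by matching ranks of fixed and anti-invariant subspaces; for \textnumero 4.10, for instance, the set $\{(1,0,0,0),\,(0,1,0,1),\,(0,0,1,0),\,(0,1,0,0)\}$ written in the basis of Lemma \ref{autp:3-31,4-10,4-12,5-2} is a $\mathbb{Z}$-basis swapped as a transposition of its first two vectors by $\sigma$.

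For the open families \textnumero 3.9 and \textnumero 4.2 from Remark \ref{autp:2.2exception}(ii), only the upper bound $\autp(X) \subseteq \wg_X$ is known. I would show that $\wg_X$ itself already admits a permutation basis of $\pic(X)$, using Matsuki's description of its action on the extremal rays of $\neo(X)$; since $\autp(X) \subseteq \wg_X$ by Proposition \ref{imagealphainweylgroup}, the same basis is permuted by $\autp(X)$. The main obstacle will be the toric family \textnumero 5.3, where $\autp(X) = \mathbb{Z}/2\mathbb{Z} \times S_3$ acts on $\pic(X) = \mathbb{Z}^5$ and one must find a basis that is permuted simultaneously by all generators; this requires a careful analysis of how the Cox-coordinate permutations interact with the Picard lattice.
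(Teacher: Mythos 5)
Most of your cases are fine and track the paper's own case-by-case argument: your basis $\{\pi^*H,\,2\pi^*H-E\}$ for \textnumero 2.21 is exactly the basis of nef generators the paper uses, and your explicit basis for \textnumero 4.10 is correct (the change-of-basis matrix has determinant $-1$ and $\sigma$ swaps the first two vectors and fixes the others). The paper's route is more uniform than yours at one point: instead of starting from the geometric bases $\{\pi^*H_i,E_j\}$ and changing basis when forced to, it reads off the extremal generators of $\nef(X)\cap\pic(X)$ from Matsuki's tables; these are canonically permuted by anything in $\wg_X$ (hence by $\autp(X)$) and in each family with $\rho\le 4$ they happen to form a $\zz$-basis, so no ad hoc adjustments are needed. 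For \textnumero 3.9 and \textnumero 4.2 your idea of working with $\wg_X$ rather than the unknown $\autp(X)$ is exactly what the paper does, but you stop at "I would show"; the paper actually records the bases ($\{2H-E_4,\,E_1-E_4,\,H\}$ and $\{H_1,H_2,S,E_5\}$). Also \textnumero 2.12 is absent from your enumeration, though it is covered by the same swap of the two hyperplane classes in $\pp^3\times\pp^3$.

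The genuine gap is \textnumero 5.3, which you explicitly leave unresolved, and it is the one case where the difficulty is real. Writing $X\cong\pp^1\times S$ with $S$ a degree-$6$ del Pezzo surface, the classes of the torus-invariant divisors are $F$ (the $\pp^1$ factor) together with the six line classes of the hexagon, which form a single orbit of size $6$ under $\wg_X\cong\zz/2\zz\times S_3$; so your first strategy -- choosing a $\wg_X$-stable subset of torus-invariant classes -- cannot yield a rank-$5$ basis. Worse, $\pic(S)$ by itself is \emph{not} a permutation module for $\wg_X$: the composite of the Cremona involution with a $3$-cycle has trace $-1$ on $\pic(S)$, while any permutation matrix has nonnegative trace. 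A permutation basis of $\pic(X)=\zz F\oplus\zz H\oplus\zz E_1\oplus\zz E_2\oplus\zz E_3$ does exist once the trivial summand $\zz F$ is added, e.g.
\[
\{\,F+H-E_1,\;F+H-E_2,\;F+H-E_3,\;2F+H,\;2F+2H-E_1-E_2-E_3\,\},
\]
where the first three vectors are permuted by $S_3$ and fixed by the Cremona involution, the last two are fixed by $S_3$ and swapped by the Cremona involution, and the determinant of the change of basis is $1$. Producing such a basis (or an equivalent verification in Cox coordinates) is the actual content of the lemma for this family, so "requires a careful analysis" cannot stand in for it.
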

\begin{proof} 
We may assume that $\autp(X)$ is nontrivial. Suppose that Picard rank $\rho=n$ and $X$ is a Fano threefold in one of the families
    \begin{center}
         \textnumero 2.6, \textnumero 2.32,  \textnumero 3.1,  \textnumero 3.3, \textnumero 3.17, \textnumero 3.27, \textnumero 4.1.   
    \end{center}
Then from the tables in \cite{Mat95,Mat23}, we observe that the $\zz$-basis of $\pic(X)$ given by Matsuki also generates the nef semigroup $\nef(X) \cap \pic(X)$. It is also straightforward to verify that each of these generators (except $-K_X$) is extremal, and they also span the Nef cone $\nef(X)$. Since the $G$-action permutes these generators, it induces a permutation action on $\pic(X)$. 
    
If $X$ belongs to one of the following families
    \begin{center}
        \textnumero 2.12, \textnumero 2.21,   \textnumero 3.7,  \textnumero 3.10,  \textnumero 3.13,  \textnumero 3.20,  \textnumero 3.25, \textnumero 3.31,  \textnumero 4.3,  \textnumero 4.6, \textnumero 4.7,  \textnumero 4.8,  \textnumero 4.12, 
    \end{center}
then we can find a set of extremal generators of $\nef(X) \cap \pic(X)$ from Matsuki's tables. 

In the second column of the table below, we list the extremal generators of \(\mathrm{Nef}(X) \cap \mathrm{Pic}(X)\) for a Fano threefold in each family. The third column lists a \(\mathbb{Z}\)-basis of \(\mathrm{Pic}(X)\); except when \(\rho = 2\), this basis coincides with that given by Matsuki.

\begin{table}[ht]
    \centering
    \renewcommand{\arraystretch}{1.3} 
    \setlength{\tabcolsep}{10pt} 
    \begin{tabular}{|c|c|c|}
        \hline
        MM \textnumero & Generators & $\pic(X)$ basis \\
        \hline
         $2.12$ & $3H - E,\; H$ & $3H - E,\; H$\\ \hline
        $2.21$ & $2H - E,\; H$ & $2H - E,\; H$\\ \hline
        $3.7$  & $H_1,\; H_2,\; H_1 + H_2 - E$ & $H_1,\; H_2,\; E$ \\ \hline
        $3.10$ & $H,\; H - E_1,\; H - E_2$ & $H,\; E_1,\; E_2$ \\ \hline
        $3.13$ & $H_1,\; H_2,\; H_1 + H_2 - E_1$ & $H_1,\; H_2,\; E_1$ \\ \hline
        $3.20$ & $H,\; H - E_1,\; H - E_2$ & $H,\; E_1,\; E_2$ \\ \hline
        $3.25$ & $H,\; H - E_1,\; H - E_2$ & $H,\; E_1,\; E_2$ \\ \hline
        $3.31$ & $E + H_1 + H_2,\; H_1,\; H_2$ & $H_1,\; H_2,\; E$ \\ \hline
         $4.3$  & $H_1,\; H_2,\; H_3,\; H_1 + H_2 + H_3 - E_4$ & $H_1,\; H_2,\; H_3,\; E_4$ \\ \hline
        $4.6$  & $H_1,\; H_2,\; H_3,\; H_1 + H_2 + H_3 - E_4$ & $H_1,\; H_2,\; H_3,\; E_4$ \\ \hline
        $4.7$  & $H_1,\; H_2,\; H_2 - E_2,\; H_1 - E_1$ & $H_1,\; H_2,\; E_1,\; E_2$ \\ \hline
        $4.8$  & $H_1,\; H_2,\; H_3,\; H_1 + H_2 + H_3 - E_1$ & $H_1,\; H_2,\; H_3,\; E_1$ \\ \hline
        $4.12$ & $H,\; H - E_1 - E_2 - E_3,\; H - E_2,\; H - E_1$ & $H,\; E_1,\; E_2,\; E_3$ \\ \hline
    \end{tabular}
\end{table}

where $H,\; H_{i}$ denote the pullback to $X$ of a general hyperplane in the base threefold and $E,\; E_j$ denote the exceptional divisors of blow-ups. In each case, we observe that these generators of $\nef(X) \cap \pic(X)$, in turn, are (extremal) generators of the entire Nef cone $\nef(X)$ and they form a $\zz$-basis of the Picard group. We know that the $G$-action on the generators of $\nef(X)$ is by permutations, it induces a permutation action on $\pic(X)$. This action is explicitly described in \S 4 in each case. 

Consider now the following families
    \begin{center}
        \textnumero 3.9, \textnumero 3.19,  \textnumero 4.2, \textnumero 4.4, \textnumero 5.1, \textnumero 5.2. 
    \end{center}
If $X$ is a smooth member of any of these families, then $\rho <n$. Assume $X$ is a Fano threefold in families \textnumero 3.19, \textnumero 4.4, \textnumero 5.1, \textnumero 5.2. We can again find the generators of $\nef(X) \cap \pic(X)$ from Matsuki's tables. These also happen to be extremal and they span $\nef(X)$. We list the corresponding $\zz$-basis of $\pic(X)$ for each family in the second column of the table below. In the third column, we list generators of $G$, acting on $ \pic(X)$ as a permutation $\zz[G]$-module. These generators describe the action of $G$ on $\pic(X)$ by permuting the divisor classes corresponding to the basis elements in column 2 (e.g., $(23)$ exchanges the second and third).

\begin{table}[ht]
    \centering
    \renewcommand{\arraystretch}{1.3} 
    \setlength{\tabcolsep}{10pt} 
    \begin{tabular}{|c|c|c|}
        \hline
        MM \textnumero  & $\pic(X)$ basis & Generators of $G$ \\
        \hline
        $3.9$  & $2H-E_4,\; E_1-E_4,\; H$   & $(12)$\\   \hline
        $3.19$  & $H,\; E_1,\; E_2$ & $(23)$\\ \hline
        $4.2$ & $H_1,\;H_2,\; S,\; E_5$   & $(12),(34)$ \\    \hline
        $4.4$  & $H,\; E_1,\; E_2,\; E_5$ & $(23)$\\
             \hline
        $5.1$  & $H,\; E_1,\; E_2, E_3,\; E_7$ & $(234),\;(23) $\\
        \hline
        $5.2$  & $H,\; E_1,\; E_2, E_3,\; E_5$ & $(23)$ \\ 
              \hline
    \end{tabular}
\end{table}

The detailed description of divisors that appear in each case can be found in \cite{Mat95,Mat23}. The induced action of $G$ is by permutations follows as a consequence (see also \S 4). 

If $X$ is Fano threefold in \textnumero 4.10 or \textnumero 5.3, then it is toric. In Lemmas \ref{autp:3-31,4-10,4-12,5-2} and \ref{autp:5-3}, we show the existence of a basis for $\pic(X)$ which is permuted under the group $\autp(X)$. 
This completes the proof.
\end{proof}

From Lemmas \ref{std-permutations-module-fact} and \ref{main-H1-result}, we now have 
\begin{coro}
    If $X$ is a Fano threefold with Picard rank $\rho\leq 5$ and $G\subseteq \aut(X)$ is a finite group, then $H^1(G,\pic(X))=0$. 
\end{coro}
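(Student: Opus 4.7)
The plan is to reduce the corollary to the vanishing statement for permutation modules given by Lemma \ref{std-permutations-module-fact}. The key input is Lemma \ref{main-H1-result}: for every smooth Fano threefold $X$ with $\rho \leq 5$, the Picard lattice admits a $\zz$-basis that is permuted by $\autp(X)$. Since the natural $G$-action on $\pic(X)$ factors through the image $\autp(X,G) \subseteq \autp(X)$, this same basis is permuted by $G$, so $\pic(X)$ is a permutation $\zz[G]$-module. Lemma \ref{std-permutations-module-fact} then yields $H^1(G,\pic(X))=0$ directly.

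If one prefers to apply Lemma \ref{std-permutations-module-fact} only when the acting group embeds into the permutations of the basis, a painless extra step via inflation-restriction handles the gap. Write $N := \ker(G \to \autp(X,G))$; since $N$ is finite and acts trivially on the torsion-free lattice $\pic(X)$, we have $H^1(N,\pic(X)) = \operatorname{Hom}(N,\pic(X)) = 0$ and $\pic(X)^N = \pic(X)$. The five-term exact sequence then collapses to an isomorphism $H^1(G,\pic(X)) \cong H^1(\autp(X,G),\pic(X))$, and the right-hand side vanishes by Lemma \ref{std-permutations-module-fact} applied to the subgroup $\autp(X,G) \subseteq \autp(X)$.

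All of the genuine content has already been compressed into Lemma \ref{main-H1-result}, where a permutation $\zz$-basis of $\pic(X)$ is exhibited family by family; the families with $\autp(X)=0$ are vacuous. Given that lemma, the corollary is a one-line deduction, so I do not anticipate any further obstacle at this stage — the case-by-case work has already been paid for.
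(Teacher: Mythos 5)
Your proposal is correct and matches the paper's argument: the corollary is deduced exactly as you describe, by combining Lemma \ref{main-H1-result} (a permutation basis for $\pic(X)$) with Lemma \ref{std-permutations-module-fact}. The inflation--restriction detour is unnecessary since the lemma already applies to a group permuting a basis possibly non-faithfully, but it is harmless.
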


\newpage
\section{Table}  \label{table}
We summarize the results from previous sections in the table below. In column 1, we write the Mori-Mukai name for the deformation family of Fano threefolds. Column 2 lists $\autp(X)$, the largest group that can be realized as the image of the action of $\aut(X)$ on $\pic(X)$. Column 3 lists Matsuki's Weyl group from \cite{Mat23}. In column 4, we list the result where we show if $\autp(X)=\wg_X$ can be realized for that family.

\small{\begin{longtable}{|p{2.5cm}|p{2.0cm}|p{2.7cm}|p{2.5cm}|}
 \caption{$\autp(X)$ for smooth Fano threefolds}\\
    \hline
MM \textnumero  &  $\autp(X)$  & $\wg_X$ & Refer \\
\hline
1.1, \dots , 1.17   &  $0$   &  $0$  &   \S 3 \\
\hline 
2.1   &  $0$  &  $0$  & Corollary \ref{im=0forsure}\\
\hline
2.2  &   $0$   & $\zz/2\zz$   &  Remark \ref{autp:2.2exception} \\
\hline
2.3, 2.4, 2.5 & $0$  &  $0$  & Corollary \ref{im=0forsure}\\
\hline
2.6    &  $\zz/2\zz$ & $\zz/2\zz$  & Lemma \ref{autP:2-6a)}, \ref{autP:2-6b)} \\
\hline
2.7, \dots 2.11    & $0$ & $0$  & Corollary \ref{im=0forsure}  \\
\hline
2.12   & $\zz/2\zz$ & $\zz/2\zz$  &  Lemma \ref{autP:2-12}   \\
\hline
2.13, \dots , 2.20    & $0$ & $0$  & Corollary \ref{im=0forsure}    \\
\hline
2.21   & $\zz/2\zz$ & $\zz/2\zz$  & Lemma \ref{autP:2-21}  \\   
\hline
2.22 \dots , 2.31    & $0$ & $0$  & Corollary \ref{im=0forsure}  \\
\hline
2.32    & $\zz/2\zz$ & $\zz/2\zz$  & Lemma \ref{autP:2-32}   \\
\hline
2.33 \dots , 2.36    & $0$ & $0$  & Corollary \ref{im=0forsure} \\
\hline
3.1   & $S_3$ & $S_3$  & Lemma \ref{autP:3-1} \\
\hline
3.2   & $0$ & $0$  & Corollary \ref{im=0forsure}   \\
\hline
3.3  & $\zz/2\zz$ & $\zz/2\zz$ &  Lemma \ref{autP:3-3}   \\
\hline
3.4, \dots , 3.6    & $0$ & $0$  & Corollary \ref{im=0forsure}    \\
\hline
3.7  & $\zz/2\zz$ & $\zz/2\zz$  & Lemma \ref{autP:3.7}    \\
\hline
3.8  & $0$ & $0$  & Corollary \ref{im=0forsure}   \\
\hline
3.9  & ? & $\zz/2\zz$ &  \\
\hline
3.10  & $\zz/2\zz$ & $\zz/2\zz$  & Lemma \ref{autP:3-10}   \\
\hline
3.11, 3.12  & $0$ & $0$  &  Corollary \ref{im=0forsure}  \\
\hline
3.13  & $S_3$ & $S_3$  & Lemma \ref{autP:3.13}   \\
\hline
3.14, 3.15, 3.16  & $0$ & $0$  & Corollary \ref{im=0forsure}   \\
\hline
3.17  & $\zz/2\zz$ & $\zz/2\zz$  & Lemma \ref{autP:3-17}    \\
\hline
3.18  & $0$ & $0$  & Corollary \ref{im=0forsure}   \\
\hline
3.19  & $\zz/2\zz$ & $\zz/2\zz$  & Lemma \ref{autP:3-19}   \\
\hline
3.20  & $\zz/2\zz$ & $\zz/2\zz$  & Lemma \ref{autP:3-20}    \\
\hline
3.21, \dots , 3.24  & $0$ & $0$  & Corollary \ref{im=0forsure}    \\
\hline
3.25  & $\zz/2\zz$ & $\zz/2\zz$  & Lemma \ref{autP:3-25}  \\
\hline
3.26  & $0$ & $0$  & Corollary \ref{im=0forsure}    \\
\hline
3.27  & $S_3$ & $S_3$  &  Example \ref{autP:3.27}   \\
\hline
3.28, 3.29, 3.30  & $0$ & $0$  & Corollary \ref{im=0forsure}\\
\hline
3.31  & $\zz/2\zz$ & $\zz/2\zz$  & Lemma \ref{autp:3-31,4-10,4-12,5-2}   \\
\hline
4.1  & $S_4$ & $S_4$  & Lemma \ref{autP:4-1}   \\
\hline
4.2  & ? & $(\zz/2\zz)^2$  &  \\
\hline
4.3  & $\zz/2\zz$ & $\zz/2\zz$  &  Lemma \ref{autP:4.3}   \\
\hline
4.4  & $\zz/2\zz$ & $\zz/2\zz$  & Lemma \ref{autP:4.4}   \\
\hline
4.5  & $0$ & $0$  & Corollary \ref{im=0forsure}   \\
\hline
4.6  & $S_3$ & $S_3$  & Lemma \ref{autP:4.6}    \\
\hline
4.7  & $\zz/2\zz$ & $\zz/2\zz$  & Lemma \ref{autP:4.7}    \\
\hline
4.8  & $\zz/2\zz$ & $\zz/2\zz$  & Lemma \ref{autP:4.8}    \\
\hline
4.9  & $0$ & $0$  & Corollary \ref{im=0forsure}    \\
\hline
4.10  & $\zz/2\zz$ & $\zz/2\zz$  &  Lemma \ref{autp:3-31,4-10,4-12,5-2}  \\
\hline
4.11  & $0$ & $0$  & Corollary \ref{im=0forsure} \\
\hline
4.12  & $\zz/2\zz$ & $\zz/2\zz$  & Lemma \ref{autp:3-31,4-10,4-12,5-2}   \\
\hline
4.13  & $\zz/2\zz$ & $\zz/2\zz$  & Lemma \ref{autP:4.13}    \\
\hline
5.1  & $S_3$ & $S_3$  & Lemma \ref{autP:5.1}   \\
\hline
5.2  & $\zz/2\zz$ & $\zz/2\zz$  & Lemma \ref{autp:3-31,4-10,4-12,5-2}   \\
\hline
5.3  & $\zz/2\zz \times S_3$  & $\zz/2\zz \times S_3$  & Lemma \ref{autp:5-3}    \\
\hline
6.1 & $S_5$ &  $S_5$ & \S \ref{autP:P1timesdP}, \cite{dolgaisko_planecremonagroups}   \\
\hline 
7.1 & - &  $W(D_5) $ & \S \ref{autP:P1timesdP}, \cite{dolgaisko_planecremonagroups}  \\
\hline
8.1  & - & $W(E_6)$  &  \S \ref{autP:P1timesdP}, \cite{dolgaisko_planecremonagroups}\\
\hline
9.1  & -  & $W(E_7)$  &  \S \ref{autP:P1timesdP}, \cite{dolgaisko_planecremonagroups}\\
\hline 
 10.1  & - & $W(E_8)$ & \S \ref{autP:P1timesdP}, \cite{dolgaisko_planecremonagroups} \\
\hline
\end{longtable}}

\bibliographystyle{alphaurl} 
\bibliography{bibis} 

@article{isko1,
  title={Fano 3-folds {I}},
  author={Iskovskikh, V. A.},
  journal={Izvestiya Rossiiskoi Akademii Nauk. Seriya Matematicheskaya},
  volume={41},
  number={3},
  pages={516--562},
  year={1977},
  publisher={Russian Academy of Sciences, Steklov Mathematical Institute of Russian~…}
}

@article{isko2,
  title={Fano 3-folds {II}},
  author={Iskovskikh, V. A.},
  journal={Izvestiya Rossiiskoi Akademii Nauk. Seriya Matematicheskaya},
  volume={42},
  number={3},
  pages={506--549},
  year={1978},
  publisher={Russian Academy of Sciences, Steklov Mathematical Institute of Russian~…}
}

@article{MM81classtable,
  title={Classification of {F}ano 3-folds with $B_2 \geq 2$},
  author={Mori, S. and Mukai, S.},
  journal={manuscripta mathematica},
  volume={36},
  number={2},
  pages={147--162},
  year={1981}
}

@incollection{mori1983fano,
  title={On {F}ano 3-Folds with $B_2 \geq 2$},
  author={Mori, S. and Mukai, S.},
  booktitle={Algebraic varieties and analytic varieties},
  volume={1},
  pages={101--130},
  year={1983},
  publisher={Mathematical Society of Japan}
}

@article{MM86,
    AUTHOR = {Mori, Shigefumi and Mukai, Shigeru},
     TITLE = {Classification of {F}ano {$3$}-folds with {$B_2\geq 2$}. {I}},
 JOURNAL = {Algebraic and topological theories ({K}inosaki, 1984)},
     PAGES = {496--545},
 PUBLISHER = {Kinokuniya, Tokyo},
      YEAR = {1986},
   MRCLASS = {14J45},
  MRNUMBER = {1102273},
}

@article{MMerratum,
    AUTHOR = {Mori, Shigefumi and Mukai, Shigeru},
     TITLE = {Erratum: ``{C}lassification of {F}ano 3-folds with {$B_2\geq
              2$}'' [{\bf 36} (1981/82), no. 2,
              147--162; ]},
  JOURNAL = {Manuscripta Mathematica},
    VOLUME = {110},
      YEAR = {2003},
    NUMBER = {3},
     PAGES = {407},
      ISSN = {0025-2611,1432-1785},
   MRCLASS = {14J45 (14E30 14J30)},
  MRNUMBER = {1969009},
       DOI = {10.1007/s00229-002-0336-2},
       URL = {https://doi.org/10.1007/s00229-002-0336-2},
}

@article{batyreveng,
 author = {Batyrev, V. V.},
 title = {Toroidal {Fano} 3-folds},
 fjournal = {Mathematics of the USSR. Izvestiya},
 journal = {Math. USSR, Izv.},
 issn = {0025-5726},
 volume = {19},
 pages = {13--25},
 year = {1982},
 language = {English},
 doi = {10.1070/IM1982v019n01ABEH001404},
 keywords = {14J30,14J10},
 zbMATH = {3779499},
 Zbl = {0495.14027}
}

@misc{fanography,
  author = {Belmans, Pieter},
  title = {Fanography},
 note = {\url{https://fanography.info}},
  year = {2025},
}

@misc{tanaka-I,
      title={Fano threefolds in positive characteristic {I}}, 
      author={Hiromu Tanaka},
      year={2024},
      eprint={2308.08121},
      archivePrefix={arXiv},
      primaryClass={math.AG},
}

@misc{tanaka-II,
      title={Fano threefolds in positive characteristic {II}}, 
      author={Hiromu Tanaka},
      year={2024},
      eprint={2308.08122},
      archivePrefix={arXiv},
      primaryClass={math.AG},
}

@misc{asaitanaka-III,
      title={Fano threefolds in positive characteristic {III}}, 
      author={Masaya Asai and Hiromu Tanaka},
      year={2025},
      eprint={2308.08124},
      archivePrefix={arXiv},
      primaryClass={math.AG},
}

@misc{tanaka2023IV,
      title={Fano threefolds in positive characteristic {IV}}, 
      author={Hiromu Tanaka},
      year={2023},
      eprint={2308.08127},
      archivePrefix={arXiv},
      primaryClass={math.AG},
}

@article{quantum,
    AUTHOR = {Coates, Tom and Corti, Alessio and Galkin, Sergey and
              Kasprzyk, Alexander},
     TITLE = {Quantum periods for 3-dimensional {F}ano manifolds},
   JOURNAL = {Geom. Topol.},
  FJOURNAL = {Geometry \& Topology},
    VOLUME = {20},
      YEAR = {2016},
    NUMBER = {1},
     PAGES = {103--256},
      ISSN = {1465-3060,1364-0380},
   MRCLASS = {14J45 (14J33 14N35)},
  MRNUMBER = {3470714},
MRREVIEWER = {Colin\ Diemer},
       DOI = {10.2140/gt.2016.20.103},
       URL = {https://doi.org/10.2140/gt.2016.20.103},
}

@article{izv19,
 AUTHOR={Przyjalkowski, V. V. and Cheltsov, I. A. and Shramov, K. A.},
     TITLE = {Fano threefolds with infinite automorphism groups},
   JOURNAL = {Izv. Ross. Akad. Nauk Ser. Mat.},
  FJOURNAL = {Izvestiya Rossiiskoi Akademii Nauk. Seriya Matematicheskaya},
    VOLUME = {83},
      YEAR = {2019},
    NUMBER = {4},
     PAGES = {226--280},
      ISSN = {1607-0046,2587-5906},
   MRCLASS = {14J45 (14J50)},
  MRNUMBER = {3985696},
MRREVIEWER = {Jaros\l aw\ A.\ Wi\'{s}niewski},
       DOI = {10.4213/im8834},
       URL = {https://doi.org/10.4213/im8834},
}

@misc{wei2019automorphismgroupssmoothcubic,
      title={Automorphism groups of smooth cubic threefolds}, 
      author={Li Wei and Xun Yu},
      year={2019},
      eprint={1907.00392},
      archivePrefix={arXiv},
      primaryClass={math.AG},
}

@misc{abe2025automorphismgroupslinearizabilityrational,
      title={Automorphism groups and linearizability of rational {F}ano conic bundle threefolds}, 
      author={Shuto Abe},
      year={2025},
      eprint={2506.15042},
      archivePrefix={arXiv},
      primaryClass={math.AG}
}

@article{Cox95,
    AUTHOR = {Cox, David A.},
     TITLE = {The homogeneous coordinate ring of a toric variety},
   JOURNAL = {J. Algebraic Geom.},
  FJOURNAL = {Journal of Algebraic Geometry},
    VOLUME = {4},
      YEAR = {1995},
    NUMBER = {1},
     PAGES = {17--50},
      ISSN = {1056-3911,1534-7486},
   MRCLASS = {14M25},
  MRNUMBER = {1299003},
MRREVIEWER = {Mina\ Teicher},
}

@article{Mat95,
    AUTHOR = {Matsuki, Kenji},
     TITLE = {Weyl groups and birational transformations among minimal
              models},
   JOURNAL = {Mem. Amer. Math. Soc.},
  FJOURNAL = {Memoirs of the American Mathematical Society},
    VOLUME = {116},
      YEAR = {1995},
    NUMBER = {557},
     PAGES = {vi+133},
      ISSN = {0065-9266,1947-6221},
   MRCLASS = {14E30 (14J45)},
  MRNUMBER = {1259932},
MRREVIEWER = {Mark\ Gross},
       DOI = {10.1090/memo/0557},
       URL = {https://doi.org/10.1090/memo/0557},
}

@misc{Mat23,
      title={Addendum/Erratum to the paper "{W}eyl groups and Birational transformations among minimal models"}, 
      author={Kenji Matsuki},
      year={2023},
      eprint={2401.13431},
      archivePrefix={arXiv},
      primaryClass={math.AG}
}

@article{mori1980threefolds,
  title={Threefolds whose canonical bundles are not numerically effective},
  author={Mori, Shigefumi},
  journal={Proceedings of the National Academy of Sciences},
  volume={77},
  number={6},
  pages={3125--3126},
  year={1980},
  publisher={National Acad Sciences}
}

@book{calabi,
    AUTHOR = {Araujo, Carolina and Castravet, Ana-Maria and Cheltsov, Ivan
              and Fujita, Kento and Kaloghiros, Anne-Sophie and
              Martinez-Garcia, Jesus and Shramov, Constantin and S\"u\ss,
              Hendrik and Viswanathan, Nivedita},
     TITLE = {The {C}alabi problem for {F}ano threefolds},
    SERIES = {London Mathematical Society Lecture Note Series},
    VOLUME = {485},
 PUBLISHER = {Cambridge University Press, Cambridge},
      YEAR = {2023},
     PAGES = {vii+441},
      ISBN = {978-1-009-19339-9},
   MRCLASS = {14J45 (32Q15 32Q20)},
  MRNUMBER = {4590444},
MRREVIEWER = {Harish\ Seshadri},
}

@article{BCDP,
    AUTHOR = {Blanc, J\'{e}r\'{e}my and Cheltsov, Ivan and Duncan, Alexander
              and Prokhorov, Yuri},
     TITLE = {Finite quasisimple groups acting on rationally connected
              threefolds},
   JOURNAL = {Math. Proc. Cambridge Philos. Soc.},
  FJOURNAL = {Mathematical Proceedings of the Cambridge Philosophical
              Society},
    VOLUME = {174},
      YEAR = {2023},
    NUMBER = {3},
     PAGES = {531--568},
      ISSN = {0305-0041,1469-8064},
   MRCLASS = {14E07 (14J30 14J45 14L30 20B25 20D05 20D99)},
  MRNUMBER = {4574643},
MRREVIEWER = {Anne-Sophie\ Kaloghiros},
       DOI = {10.1017/S030500412200041X},
       URL = {https://doi.org/10.1017/S030500412200041X},
}

@article{Prokhorov-G-FanoII,
title = {G-{F}ano threefolds, {II}},
author = {Yuri Prokhorov},
pages = {419--434},
volume = {13},
number = {3},
journal = {Advances in Geometry },
doi = {doi:10.1515/advgeom-2013-0009},
year = {2013},
}

@misc{abban2025kstabilityfano3foldsworld,
      title={K-stability of {F}ano 3-folds in the World of {N}ull-A}, 
      author={Hamid Abban and Ivan Cheltsov and Takashi Kishimoto and Frederic Mangolte},
      year={2025},
      eprint={2505.04330},
      archivePrefix={arXiv},
      primaryClass={math.AG},
}

@article{hilbert,
    AUTHOR = {Kuznetsov, Alexander G. and Prokhorov, Yuri G. and Shramov,
              Constantin A.},
     TITLE = {Hilbert schemes of lines and conics and automorphism groups of
              {F}ano threefolds},
   JOURNAL = {Jpn. J. Math.},
  FJOURNAL = {Japanese Journal of Mathematics},
    VOLUME = {13},
      YEAR = {2018},
    NUMBER = {1},
     PAGES = {109--185},
      ISSN = {0289-2316,1861-3624},
   MRCLASS = {14J45 (14C05 14J30 14J50)},
  MRNUMBER = {3776469},
MRREVIEWER = {Alexandr\ V.\ Pukhlikov},
       DOI = {10.1007/s11537-017-1714-6},
       URL = {https://doi.org/10.1007/s11537-017-1714-6},
}

@misc{cheltsovs2-12family,
      title={K-stability and space sextic curves of genus three}, 
      author={Ivan Cheltsov and Oliver Li and Sione Ma'u and Antoine Pinardin},
      year={2024},
      eprint={2404.07803},
      archivePrefix={arXiv},
      primaryClass={math.AG},
}

@book{Laz,
    AUTHOR = {Lazarsfeld, Robert},
     TITLE = {Positivity in algebraic geometry. {I}},
    SERIES = {Ergebnisse der Mathematik und ihrer Grenzgebiete. 3. Folge. A
              Series of Modern Surveys in Mathematics [Results in
              Mathematics and Related Areas. 3rd Series. A Series of Modern
              Surveys in Mathematics]},
    VOLUME = {48},
      NOTE = {Classical setting: line bundles and linear series},
 PUBLISHER = {Springer-Verlag, Berlin},
      YEAR = {2004},
     PAGES = {xviii+387},
      ISBN = {3-540-22533-1},
   MRCLASS = {14-02 (14C20)},
  MRNUMBER = {2095471},
MRREVIEWER = {Mihnea\ Popa},
       DOI = {10.1007/978-3-642-18808-4},
       URL = {https://doi.org/10.1007/978-3-642-18808-4},
}

@incollection{dolgaisko_planecremonagroups,
    AUTHOR = {Dolgachev, Igor V. and Iskovskikh, Vasily A.},
     TITLE = {Finite subgroups of the plane {C}remona group},
 BOOKTITLE = {Algebra, arithmetic, and geometry: in honor of {Y}u. {I}.
              {M}anin. {V}ol. {I}},
    SERIES = {Progr. Math.},
    VOLUME = {269},
     PAGES = {443--548},
 PUBLISHER = {Birkh\"auser Boston, Boston, MA},
      YEAR = {2009},
      ISBN = {978-0-8176-4744-5},
   MRCLASS = {14E07 (14J26)},
  MRNUMBER = {2641179},
MRREVIEWER = {Julie\ D\'eserti},
       DOI = {10.1007/978-0-8176-4745-2\_11},
}

@article{hmonpaper,
   author = {Codogni, Giulio and Fanelli, Andrea and Svaldi, Roberto and Tasin, Luca},
    title = {Fano Varieties in {M}ori Fibre Spaces},
    journal = {International Mathematics Research Notices},
    volume = {2016},
    number = {7},
    pages = {2026-2067},
    year = {2015},
    month = {06},
 issn = {1073-7928},
    doi = {10.1093/imrn/rnv173},
    url = {https://doi.org/10.1093/imrn/rnv173},
}

@article{kawamata,
 ISSN = {0003486X, 19398980},
 note = {\url{http://www.jstor.org/stable/1971417}},
 author = {Yujiro Kawamata},
 journal = {Annals of Mathematics},
 number = {1},
 pages = {93--163},
 publisher = {[Annals of Mathematics, Trustees of Princeton University on Behalf of the Annals of Mathematics, Mathematics Department, Princeton University]},
 title = {Crepant Blowing-Up of 3-Dimensional Canonical Singularities and Its Application to Degenerations of Surfaces},
 urldate = {2025-08-22},
 volume = {127},
 year = {1988}
}

@misc{equibrauergroup,
      title={Computing the equivariant Brauer group}, 
      author={Alena Pirutka and Zhijia Zhang},
      year={2024},
      eprint={2410.05072},
      archivePrefix={arXiv},
      primaryClass={math.AG},
}

@article{H1isbirinvariant,
    AUTHOR = {Bogomolov, Fedor and Prokhorov, Yuri},
     TITLE = {On stable conjugacy of finite subgroups of the plane {C}remona
              group, {I}},
   JOURNAL = {Cent. Eur. J. Math.},
  FJOURNAL = {Central European Journal of Mathematics},
    VOLUME = {11},
      YEAR = {2013},
    NUMBER = {12},
     PAGES = {2099--2105},
      ISSN = {1895-1074,1644-3616},
   MRCLASS = {14E07},
  MRNUMBER = {3111709},
MRREVIEWER = {J\'er\'emy\ Blanc},
       DOI = {10.2478/s11533-013-0314-9},
       URL = {https://doi.org/10.2478/s11533-013-0314-9},
}

@article{urabe-H1computation,
 ISSN = {00255718, 10886842},
 URL = {http://www.jstor.org/stable/2153844},
 author = {Tohsuke Urabe},
 journal = {Mathematics of Computation},
 number = {213},
 pages = {247--258},
 publisher = {American Mathematical Society},
 title = {Calculation of {M}anin's Invariant for {D}el {P}ezzo Surfaces},
 urldate = {2025-09-11},
 volume = {65},
 year = {1996}
}

@misc{kresch2022cohomologyfinitesubgroupsplane,
      title={Cohomology of finite subgroups of the plane {C}remona group}, 
      author={Andrew Kresch and Yuri Tschinkel},
      year={2022},
      eprint={2203.01876},
      archivePrefix={arXiv},
      primaryClass={math.AG},
}

@article{kresch-unramified-brauergroup,
title = {Unramified {B}rauer group of quotient spaces by finite groups},
journal = {Journal of Algebra},
volume = {664},
pages = {75-100},
year = {2025},
issn = {0021-8693},
doi = {https://doi.org/10.1016/j.jalgebra.2024.10.006},
author = {Andrew Kresch and Yuri Tschinkel}
}

@misc{tschinkel2025cohomologicalobstructionsequivariantunirationality,
      title={Cohomological obstructions to equivariant unirationality}, 
      author={Yuri Tschinkel and Zhijia Zhang},
      year={2025},
      eprint={2504.10204},
      archivePrefix={arXiv},
      primaryClass={math.AG},
}

@article{dolga-modular-curves,
 AUTHOR = {Dolgachev, Igor V.},
     TITLE = {Invariant stable bundles over modular curves {$X(p)$}},
JOURNAL = {Recent progress in algebra ({T}aejon/{S}eoul, 1997)},
    SERIES = {Contemp. Math.},
    VOLUME = {224},
     PAGES = {65--99},
 PUBLISHER = {Amer. Math. Soc., Providence, RI},
      YEAR = {1999},
      ISBN = {0-8218-0972-5},
   MRCLASS = {14H60 (11G18 14G35)},
  MRNUMBER = {1653063},
MRREVIEWER = {H.\ Lange},
       DOI = {10.1090/conm/224/03193},
       URL = {https://doi.org/10.1090/conm/224/03193},
}

@article{hassett_tschinkel_odd,
title = {Equivariant geometry of odd-dimensional complete intersections of two quadrics}, 
author = {Hassett, Brendan and Tschinkel, Yuri}, 
journal = {Pure and Applied Mathematics Quarterly},
volume = {18},
number = {4},
year = {2022},
doi = {10.4310/PAMQ.2022.v18.n4.a8}
}

@book{manin-cubicforms,
    AUTHOR = {Manin, Yu.\ I.},
     TITLE = {Cubic forms},
    SERIES = {North-Holland Mathematical Library},
    VOLUME = {4},
   EDITION = {Second},
      NOTE = {Algebra, geometry, arithmetic,
              Translated from the Russian by M. Hazewinkel},
 PUBLISHER = {North-Holland Publishing Co., Amsterdam},
      YEAR = {1986},
     PAGES = {x+326},
      ISBN = {0-444-87823-8},
   MRCLASS = {11Gxx (14Gxx 14J20)},
  MRNUMBER = {833513},
}

@article{magma,
    AUTHOR = {Bosma, Wieb and Cannon, John and Playoust, Catherine},
     TITLE = {The {M}agma algebra system. {I}. {T}he user language},
      NOTE = {Computational algebra and number theory (London, 1993)},
   JOURNAL = {J. Symbolic Comput.},
  FJOURNAL = {Journal of Symbolic Computation},
    VOLUME = {24},
      YEAR = {1997},
    NUMBER = {3-4},
     PAGES = {235--265},
      ISSN = {0747-7171},
   MRCLASS = {68Q40},
  MRNUMBER = {MR1484478},
       DOI = {10.1006/jsco.1996.0125},
}

@manual{sagemath,
    label        = {Sag95},
    author       = {{The Sage Developers}},
    title        = {{S}age{M}ath, the {S}age {M}athematics {S}oftware {S}ystem},
    note          = {\url{https://www.sagemath.org}},
    version      = {10.6},
    year         = {2025},
    note         = {DOI 10.5281/zenodo.6259615},
}

@misc{NormalToricVarietiesSource,
  title = {{NormalToricVarieties: routines for working with normal toric varieties and related objects. Version~1.9}},
  author = {Gregory G. Smith},
  howpublished = {A \emph{Macaulay2} package available at
\url{https://github.com/Macaulay2/M2/tree/master/M2/Macaulay2/packages}}
}

\end{document}